\providecommand\mathbb[1]{\mathsf{##1}}
       \providecommand\mathfrak[1]{\mathcal{##1}}}
    \providecommand\mathbb[1]{\mathsf{#1}}
    \providecommand\mathfrak[1]{\mathcal{#1}}
\newtheorem{The}{Theorem}[section]
\newtheorem{Lem}[The]{Lemma}
\newtheorem{Cor}[The]{Corollary}
\newtheorem{Pro}[The]{Proposition}
\def\a{\alpha}
\def\A{\mathbf A}
\def\b{\beta}
\def\C{\mathbf C}
\def\D{\Delta}
\def\d{\delta}
\def\f{\phi}
\newromanexpr\F{F}
\def\g{\gamma}
\def\J{{\mathcal{J}}}
\def\l{\lambda}
\def\L{\mathbf{L}}
\newromanexpr\ord{ord}
\newromanexpr\orb{orb}
\newromanexpr\geom{geom}
\def\p{\pi}
 \def\P{{\mathbf P}}
\def\Q{\mathbf Q}
\def\R{\mathbf R}
\def\r{\rho}
\def\s{\sigma}
\def\t{\tau}
\def\w{\omega}
\def\y{\wedge}
\def\Z{\mathbf Z}
\def\z{\zeta}
\def\limproj{\mathop{\oalign{lim\cr\hidewidth$\longleftarrow$\hidewidth\cr}}}
\title[Geometric motivic Poincar{\'e} series of q.o.~singularities]
  {Geometric motivic Poincar{\'e} series of quasi-ordinary singularities}
\author[ H. Cobo Pablos and P.~D.~Gonz\'alez P\'erez ]
         {HELENA COBO PABLOS \thanks{Supported by Fundaci\'on Caja Madrid and MTM2004-08080-C02-01 grant
          of MEC Spain}\\
         Department of Mathematics,
     University of Leuven, \addressbreak
     Celestijnenlaan 200B,
B-3001 Leuven-Heverlee, Belgium
          \and\ PEDRO D. GONZ\'ALEZ P\'EREZ  \thanks{Supported by {\em Programa Ram\'on y Cajal} and MTM2004-08080-C02-01 grant
          of MEC Spain} \\ ICMAT. Depto. \'Algebra.
          Universidad Complutense de Madrid.  \addressbreak
Plaza de las Ciencias 3. 28040. Madrid. Spain
          }
\begin{document}

\maketitle

 \begin{abstract}
The
{\em geometric motivic Poincar\'e series} of a germ  $(S,0)$ of
complex algebraic variety takes into account the classes in
the Grothendieck ring of the jets of arcs through $(S,0)$. Denef and Loeser
proved that this series has a rational form. We give an explicit description of this invariant when
$(S,0)$ is an irreducible germ of {\em quasi-ordinary hypersurface
singularity}
in terms of  the Newton polyhedra of the  {\em
logarithmic jacobian ideals}. These ideals are determined by the {\em characteristic monomials}
of a quasi-ordinary branch parametrizing $(S,0)$.
\end{abstract}

\section*{Introduction}
A germ $(S,0)$ of complex analytic variety equidimensional of
dimension $d$ is {\em quasi-ordinary} (q.o.) if there exists a
finite map $\pi: (S,0)\rightarrow(\C^d,0)$ which is unramified
outside a normal crossing divisor in $(\C^d,0)$. Quasi-ordinary
singularities admit fractional power series parametrizations,
which generalize Newton-Puiseux expansions of plane curves (see
\cite{Abhyankar}). Quasi-ordinary surface singularities appear
classically in the Jung's method to parametrize and resolve
surface singularities (see \cite{Jung}) and are related to the
 classification of singularities by Zariski's
dimensionality type (see \cite{Lipman-Eq}). Classical examples are
plane curve singularities, Hirzebruch-Jung surfaces and simplicial
toric varieties. In addition to the applications in
equisingularity problems, the
 class of q.o.~singularities is also of interest
to test and study various open questions and conjectures for
singularities in general, particularly in the hypersurface case
(see \cite{Nemethi-Pregunta}). In many cases the results passed by
using the fractional power series parametrizations of these
singularities, which allow  explicit computations combining
analytic, topological and combinatorial arguments (see for
instance \cite{Lipman2, Gau, PPP04, Nemethi, ACLM, P-F}). It is
natural to investigate new invariants of singularities, such as
those arising in the development of motivic integration, on this
class of singularities with the hope to extend the methods or
results to wider classes (for instance by passing through Jung's
approach).

We recall the definition of the {\em geometric  motivic Poincar\'e
series} of a germ
 $(Z,0)$ of complex algebraic variety (or
complex analytic or algebroid) equidimensional of dimension $d$.
The set $H_Z$ of formal arcs of the form, $\Spec \C[[t]]
\rightarrow (Z,0)$ has a scheme structure over $\C$
(not necessarily of finite type). Let $s \geqslant 0$, the set
$H_{s,Z}$ of $s$-jets of $(Z, 0)$ of the form  $ \Spec
\C[t] / (t^{s+1}) \rightarrow (Z,0)$, has the structure of
algebraic variety over $\C$. By a Theorem of Greenberg
\cite{Greenberg}, the image of $H_Z$ by the natural morphism of
schemes $j_s : H_Z \rightarrow H_{s, Z}$, which maps an arc to its
$s$-jet,
  is a constructible subset          of     $ H_{s,Z}$.

The {\em Grothendieck ring} $ K_0 (\Var_\C) $
of $\C$-{\em varieties} is generated by the symbols $[X]$ for $X$
an algebraic variety,  subject to relations: $[X] = [X']$ if $X$
is isomorphic to $X'$, $[X] = [X- X'] + [X'] $ if $X'$ is closed
in $X$ and $[X][X'] = [X \times X']$.
 Since a constructible set $W$  has an image $[W]$ in the {\em
Grothendieck ring}  $K_0 (\Var_\C) $ of varieties,
it is natural to consider the {\em geometric motivic Poincar\'e
series} of the germ $(Z,0)$:
\[
P^{(Z,0)}_{\geom}  (T) := \sum_{s \geqslant 0} [j_s
(H_Z)] T^s \in K_0 (\Var_\C) [[T]].
\]
 This series, which was introduced more generally
by Denef and Loeser in \cite{DL1}, is inspired
by related Poincar\'e series in arithmetic geometry \cite{DL-R}. It
has a rational form:
\begin{The*}   (see \cite{DL1}   Theorem 1.1) Set $\L:= [\A_{\C}^1]$.
      There exist $a_i \in
\Z$ and $b_i \in \Z_{\geqslant 1}$,  for $i=1, \dots,r$ and $Q(T)
\in K_0(\Var_\C) [\L ^{-1}]  [T]$ such that
\[
 P_{\geom}^{(Z,0)} (T)= Q(T) \prod_{i=1}^r (1 -
\L^{a_i} T^{b_i})^{-1} \mbox{ in } K_0 (\Var_\C) [\L ^{-1}]  [[T]].
\]
\end{The*}

 There is not a general formula
for this invariant in terms of a resolution of singularities of
$(Z,0)$ (notice that these kind of formulas exists for other
motivic invariants as the {\em motivic zeta function}, see
\cite{DL-bcn}). There is no conjecture on the meaning of the
exponents $a_i$, $b_i$ which may appear in the denominator of a rational form of $P_{\geom}^{(Z,0)} (T)$. It is not known if such a rational expression for this series  holds in the ring $K_0
(\Var_\C)  (T)$.

If $(Z, 0)$ is an analytically irreducible germ of plane curve the
series  $P^{(Z,0)}_{\geom}  (T)$ determines and it is
determined  by  the multiplicity of $(Z,0)$ (see \cite{DL2}).
   Nicaise has given formulas for $P^{(Z,0)}_{\geom} (T)
$ for germs $(Z,0)$ with a {\em very special} embedded resolution
of singularities \cite{Nicaise2}, for instance if $(Z,0)$ is the
cone over a smooth hypersurface $H \subset \P^d_\C$ then
$P^{(Z,0)}_{\geom} (T)$ is determined by $[H]$,  $d$ and
the multiplicity of $(Z,0)$.
 Lejeune-Jalabert and Reguera have
studied the motivic Poincar\'e series of a germ $(Z,0)$ of normal
toric surface at its distinguished point. They have given a
formula for the rational form of this series in terms of the
Hirzebruch-Jung continued fraction describing the resolution of
singularities of $Z$ (see \cite{LR} and also \cite{Nicaise} for a
different approach and comparison with other motivic series). If
$(Z,0)$ is a germ of affine toric variety of dimension $d$ we
prove in \cite{CoGP} that $P^{(Z,0)}_{\geom} (T)$ is
determined by the Newton polyhedra of the {\em logarithmic
jacobian ideals}, which are defined in terms of the modules of
differential forms with logarithmic poles outside the torus of
$Z$.  Rond studied the coefficients  of the series $P^{(S,0)}_{geom}
(T)$ associated to  a germ of q.o.~hypersurface and computed the
sum of this series in some particular cases (see \cite{Rond}).

In this paper we describe the rational form of the geometric motivic
Poincar\'e series of a germ $(S, 0)$ of q.o.~hypersurface
singularity. Our approach is independent of Rond's.

A q.o.~hypersurface singularity  $(S, 0)$ has a fractional power series
parametrization, which possesses a finite set of {\em characteristic
monomials} (generalizing the characteristic exponents of plane
branches) and which classify the embedded topological class of
$(S, 0) \subset (\C^{d+1},0)$ (see \cite{Gau, Lipman2}).
Since the normalization $(\bar{S}, 0)$ of the germ $(S,0)$ is a
toric singularity   (see
\cite{GP4}) it is natural to extend the approach in \cite{CoGP} to
the q.o.~case. For this reason we introduce a sequence of
monomial ideals $\J_1, \dots, \J_d$ of the analytic algebra of the normalization
$(\bar{S},0)$. These ideals, called the {\em logarithmic jacobian
ideals},  are defined first in a combinatorial manner in terms of
the characteristic monomials. Inspired by the toric
case \cite{CoGP, LR}, we prove that these ideals can be defined in
terms of the composite of canonical maps $\Omega_S \rightarrow
\Omega_{\bar{S}} \rightarrow \Omega_{\bar{S}} (\log D)$, where
$\Omega_S$ (resp. $\Omega_{\bar{S}}$ and $\Omega_{\bar{S}}(\log
D)$) denotes the module of holomorphic differential forms
of
 $(S,0)$ (resp. over $(\bar{S},0)$ and the module of forms with
logarithmic poles at the complement $D$ of the torus in
$({\bar{S}},0)$), see Section \ref{torQO}. In the toric case the
blow up of the ideal $\J_d$ is the Nash modification (see
\cite{LR}). In the  q.o.~case the normalized Nash modification of
$(S,0)$ is equal to the normalization of $(S,0)$ followed by
the normalized blow-up of the ideal $\J_d$ (see \cite{PedroNash}).

We study the arc space $H_{S}$ of $(S,0)$ by relating it with the
arc space $H_{\bar{S}}$ of the normalization $(\bar{S},0)$. We
denote by $H_{\bar{S}}^*$   (resp. by $H_{S}^*$)  the set of arcs
with generic point in the torus of $\bar{S}$ (resp. arcs in $H_S$
which lift to an arc in $H^*_{\bar{S}}$). For $s \geqslant 0$ we
have that $j_s (H_{\bar{S}}) = j_s (H_{\bar{S}}^*)$ (see
\cite{Nicaise}) but $j_s (H_{{S}}) \ne  j_s (H_{{S}}^*)$ in
general if $S$ is not normal (the simplest example is Whitney
umbrella). To avoid this difficulty we
consider a finite set of q.o.~coordinate sections $\{ (S_\theta,
0) \}_{\theta} $ of $(S,0)$, with $S_\theta  = S$ if $\theta =0$,
which are compatible with the toric structure of the
normalization. We define an auxiliary motivic series $P(S)$ whose
coefficients are the image in the Grothendieck ring of the
$s$-jets of arcs in $H_S^*$  which are not jets of arcs through
any proper q.o.~coordinate section $(S_\theta, 0)$, ${\theta \ne
0} $. It follows from this that  $
    P_{
{\geom}}^{(S,0)}(T) = \sum_{\theta} P(S_\theta)$. This reduces the
study of    $P_{\geom}^{(S,0)}(T)$ to the study of
$P(S)$.

Since $(\bar{S},0)$ is a toric singularity, the arc space of the
torus of $\bar{S}$ acts on $H_{\bar{S}}$. We use the orbit
decomposition of  $H_{\bar{S}}$ under this action studied by Ishii
in \cite{Ishii-algebra, Ishii-crelle, Ishii-fourier}.
 The set $H^*_S$
decomposes as a disjoint union of orbits $H^*_{S,\nu}$,
parametrized by the arc space of the torus,  where $\nu$ runs
through certain subset of the lattice $N$ of one-parametric
subgroups of the torus. We prove that the $s$-jets of these orbits
are locally closed subsets which are either disjoint or equal and
we characterize the equality in combinatorial terms. Then we prove that
the coefficient of $T^m$ in the series $P(S)$ expand as a sum of
classes $[j_m (H_{S, \nu}^*)]$, where $\nu$ runs through a finite
subset of the lattice $N$.

The description of the  rational form of the
geometric motivic Poincar\'e series $P_{
{\geom}}^{(S,0)}(T)$, is done by using the methods and combinatorial results of \cite{CoGP}.

 We determine first a formula for the
class $[j_m (H_{S, \nu}^*)]$ in terms of the Newton polyhedra of the logarithmic jacobian ideals in Theorem
\ref{keyQO}. These ideals satisfy similar combinatorial properties in the
toric case and in the q.o.~case, though the proofs are more
difficult in the second case. In addition, new
combinatorial features are needed to relate the parametrization of
$(S,0)$ with fractional power series with the parametrization of
$j_m (H^*_{S, \nu})$ by using the coordinates of the arc space of
the torus.

We prove then that the rational form of the series
 $P(S)$ is determined by the Newton polyhedra of the
logarithmic jacobian ideals and the lattice $N$ (see Theorem
\ref{PLambdaRacQO}).  We deduce a formula for the rational form of  $P_{
{\geom}}^{(S,0)}(T)$, which holds in
the ring  $\Z[\L](T)\subset K_0 ({
  \Var}_\C) (T)$, see Corollary \ref{P-geomQO}.
In particular a finite set of \textit{candidate poles} of $P_{\geom}^{(S,0)}(T)$ is obtained.
Our result implies that the embedded topological type of the germ
$(S,0) \subset (\C^{d+1}, 0)$ determines the series $P_{
{\geom}}^{(S,0)}(T)$. The converse is not true even if $d=1$. As an
application we deduce a formula for the {\em motivic volume} of
the arc space of a q.o.~hypersurface germ $(S,0)$ in terms of the
logarithmic jacobian ideal $\mathcal J_d$ (see Corollary
\ref{vol-mot2QO}).

 Notice that the series $P_{
{\geom}}^{(S,0)}(T)$ and $P_{\geom}^{(\bar{S},0)}(T)$ may
be quite different (for instance if the normalization  of $(S,0)$
is smooth). In Section \ref{QOexample} we give an example of q.o.~
surface $(S,0)$ such that $P_{\geom}^{(S,0)}(T) \ne
P_{\geom}^{(Z(S),0)}(T)$, where $Z(S)$ is the {\em
monomial variety} associated to the q.o.~hypersurface (see
Section \ref{secQO}). Notice that these two series coincide in the
one dimensional case.

The paper is organized as follows. The first three sections
introduce the basic notions we need on arc and jets spaces, toric
geometry and  q.o.~singularities. We describe the orbit
decomposition of the arc space of a q.o.~hypersurface in Section
\ref{ArcsAndJetsQO}. In Section \ref{mainQO} we state the main
results.
 In Section \ref{conv-newtonQO} we
give some combinatorial convexity properties of the Newton
polyhedra of the logarithmic jacobian ideals. In Section
\ref{tor-jetQO}  we prove Theorem \ref{keyQO}.
 In Section \ref{GenptOrbitsQO} we describe the series $P(S)$ and prove the
 rationality results.
 In Section \ref{torQO} we define the logarithmic jacobian
ideals in terms of differential forms.

The results and proofs in this paper hold if the field $\C$ is replaced
by any algebraically closed field of characteristic
zero.
\section{Basic definitions on arc and jet spaces}
\label{intro-arcsQO}

 We refer to \cite{Ishii-07, EinMustata,DL-bcn,Loo,
Veys} for expository papers on arc and  jet schemes and/or motivic
integration. We introduce arc and jet spaces on an equidimensional
germ of complex analytic variety $(Z,0)$ (or complex algebraic
variety or algebroid). Arc  and jet spaces can be defined on any
algebraic variety  (without fixing the origin of the arcs).

We have an embedding $(Z,0) \subset (\A^n_\C, 0)$  in such a way
that the germ   $(Z,0) $ is defined by the ideal  $I \subset \C \{
x_1, \dots, x_n \} $. An arc $h: \Spec  \C[[t]] \rightarrow
(\A^n_\C, 0)$ (resp. a $m$-jet $\Spec \C [t] /(t^{m+1})
\rightarrow (Z, 0)$)
 is defined by $n$ formal power series
\begin{equation} \label{arc-expQO}
x_i (t) =  a_i^{(1)} t + a_i^{(2)}t^2 + \cdots + a_i^{(r)}t^r +
\cdots      , \quad  i=1,\dots, n,
\end{equation}
(resp. $n$-polynomial expressions (\ref{arc-expQO}) $\mod
t^{m+1}$). For any $F \in I$, the coefficient of $t^k$ in the
series $ F(x_1(t), \dots, x_n (t))$ is a polynomial expression
$\a_F^{(k)} ( \underline{a}^{(1)} , \dots, \underline{a}^{(k)})$,
where $\underline{a}^{(j)} := ( a^{(j)}_1, \dots, a^{(j)}_n )$,
for $j \in \Z_{\geqslant 0}$. This arc (resp. $m$-jet)  factors
through $(Z, 0)$ if for any $F \in I$ we have $
 F(x_1(t), \dots, x_n (t)) =
0$. (resp.  $F(x_1(t), \dots, x_n (t)) = 0 \mod t^{m+1}$).

The arc space $H_Z$ (resp. $m$-jet space $H_{m, Z}$) is the
reduced scheme underlying the affine scheme $\Spec
\mathcal{A}_{Z} $,  where $\mathcal{A}_{Z} = \C [
\underline{a}^{(1)} , \underline{a}^{(2)}, \underline{a}^{(3)},
\dots ] /( \a_F^{(k)} ( \underline{a}^{(1)} , \dots,
\underline{a}^{(k)}))_{k \geqslant 1, F \in I}$ (resp.
 $\Spec \mathcal{A}_{m, Z}$, where
$\mathcal{A}_{m, Z} := \C [ \underline{a}^{(1)} , \dots,
\underline{a}^{(m)}] / ( \a_F^{(k)} ( \underline{a}^{(1)} , \dots,
\underline{a}^{(k)}) )_{ k = 1, \dots, m, F \in I} )$.

We have morphisms of schemes $j_m:H_Z\rightarrow H_{m,Z}$ and
$j_{m}^{m+1} : H_{m+1, Z} \rightarrow H_{m, Z}$ induced by
truncating arcs or jets $\mod t^{m+1}$, for every $m \geqslant 0$.
We have that $H_{Z} = \limproj H_{m, Z}$.

If $h (t) =  \sum_{i \geqslant 0} a_i t^i $ is a formal power
series and $m \geqslant 0$ we set $j_m(h) := h(t) \mod t^{m+1}$.

\section{Some basic definitions on toric geometry} \label{sec-tor}

See \cite{Fu, Ewald, Oda}  for general references on toric
geometry. If $N \cong \Z^{d}$ is a lattice we denote by $N_\R$
(resp. $N_\Q$) the vector space spanned by $N$ over the field $\R$
(resp. over $\Q$). If $\{ u_i \}_{i\in I} \subset N_\R$ we denote by
$ \mathsf{ span}_\Q  \{ u_i \}_{i\in I}$ the linear subspace spanned by the $u_i$ over $\Q$.

In what follows a {\it cone} mean a {\it
rational convex polyhedral cone}: the set of non negative linear
combinations of vectors $a_1, \dots, a_r \in N$. The cone $\t$ is
{\it strictly convex} if it contains no lines,  in that case we
denote by $0$ the $0$-dimensional face of $\t$. The {\em dual
cone}  $\t^\vee$ (resp. {\em orthogonal cone} $\t^\perp$) of $\t$
is the set $ \{ w  \in M_\R\ |\ \langle w, u \rangle \geqslant 0,$
(resp. $ \langle w, u \rangle = 0$) $ \; \forall u \in \t \}$. We
denote by $\stackrel{\circ}{\t}$ or by $\mathsf{ int} (\t)$ the
relative interior of the cone $\t$. A {\em fan} $\Sigma$ is a
family of strictly convex
  cones  in $N_\R$
such that any face of such a cone is in the family and the
intersection of any two of them is a face of each. The relation
$\theta \leqslant \t$  (resp. $\theta < \t$) denotes that $\theta$
is a face of $\t$ (resp. $\theta \ne \t$ is a face of $\t$). The
{\em support} (resp. the $k$-{\em skeleton}) of the fan $\Sigma$
is the set $|\Sigma | := \bigcup_{\t \in \Sigma} \t \subset N_\R$
(resp. $\Sigma^{(k)} = \{ \t \in \Sigma \mid \dim \t = k \}$). We
say that a fan $\Sigma'$ is a {\it subdivision\index{fan
subdivision}} of the fan $\Sigma$ if both fans have the same
support and if every cone of $\Sigma'$ is contained in a cone of
$\Sigma$. If $\Sigma_i$ for $i=1,\ldots,n$ are fans with the same
support their intersection
$\cap_{i=1}^n\Sigma_i:=\{\cap_{i=1}^n\t_i\ |\ \t_i\in\Sigma_i\}$
is also a fan.

Let $\t$ be a strictly convex cone rational for the lattice $N$.
The toric variety $Z_{\t} := \Spec \C [ \t^\vee \cap M] $,
denoted also by $Z_{\t, N}$ or $Z^{\t^\vee \cap M}$, is normal.
The torus $ T_N:= Z^{M}$ is an open dense subset of $Z_\t$, which
acts on $Z_\t$ and the action extends the action of the torus on
itself by multiplication. There is a one to one correspondence
between the faces $\theta $ of $\t$ and the orbits $\orb_\theta$
of the torus action on $Z_\t$,
 which reverses the
inclusions of their closures. The closure of $\orb_\theta$ is the
toric variety $Z^{\s^\vee \cap \theta^\perp \cap M }$  for $\t
\leqslant \s$.   The orbit $\orb_\t$ is reduced to a closed point
called the origin $0$ of the toric variety $Z_\t$. The ring $\C \{
\t^\vee \cap M \} $ of germs of holomorphic functions at $0 \in
Z_\t$ is a subring of the ring $\C [[\t^\vee \cap M]]$ of formal
power series with exponents in $\t^\vee \cap M$.

If  $ \emptyset \ne {\mathcal I} \subset   \t^\vee \cap M $, the
{\em Newton polyhedron} ${\mathcal N} ({\mathcal I})$ of the
monomial ideal defined by ${\mathcal I}$ is the Minkowski sum  of
sets ${\mathcal I} + \t^\vee$. The {\em support function}
$\mbox{\rm ord}_{\mathcal {\mathcal I}}$ of the polyhedron
${\mathcal N} ( {\mathcal I} )$ is defined by $\mbox{\rm
ord}_{\mathcal I} : \t \rightarrow \R$, $ \nu \mapsto \inf_{\omega
\in {\mathcal N} ( {\mathcal I} )} \langle \nu, \omega \rangle$. A
vector $\nu \in \t$ defines the face  $ {\mathcal F}_\nu := \{
\omega \in {\mathcal N} (\mathcal I) \mid \langle \nu, \omega
  \rangle   = \mbox{\rm ord}_{\mathcal I} (\nu ) \}$
of  the polyhedron ${\mathcal N} ({\mathcal I})$.  All faces of
${\mathcal N} ({\mathcal I})$ are of this form, the compact faces
are defined by vectors $\nu  \in \stackrel{\circ}{\t}$. The {\em
dual fan}   $\Sigma ({\mathcal I})$ associated to an integral
polyhedron ${\mathcal N} (\mathcal I)$ is a fan supported on $\t$
which is  formed by the cones $ \s( {\mathcal F} ) := \{ \nu \in
\s \; \mid \langle \nu , \omega \rangle   = \mbox{\rm
ord}_{\mathcal I} ( \nu ),
 \; \forall \omega \in {\mathcal F}\}$,
for ${\mathcal F}$ running through the faces of ${\mathcal N}
({\mathcal I})$. Notice that if $\theta \in  \Sigma ( {\mathcal I}
)$ and if $\nu, \nu' \in \stackrel{\circ}{\theta}$ then we have
that ${\mathcal F}_{\nu} = { \mathcal F}_{\nu'}$ and we denote
this face of ${\mathcal N} (\mathcal I)$ also by  ${ \mathcal
F}_{\theta}$.

The
 affine varieties $Z_\s$ corresponding to cones in a fan $\Sigma$
 glue up to define a {\em toric variety\index{toric variety}} $
 Z_\Sigma$.
The subdivision $\Sigma'$ of  a fan $\Sigma$ defines a {\em toric
 modification\index{modification}} $ \pi_{\Sigma'} : Z_{\Sigma '}
 \rightarrow   Z_\Sigma$.

If $\mathcal{I}$ is a monomial ideal of $Z_\t$ $\Sigma = \Sigma
({\mathcal I}) $,  the toric modification
  $ \p_\Sigma : Z_{\Sigma}   \rightarrow Z_\t$
  is the {\em normalized blowing up} of $Z_\t$ centered at ${\mathcal I}$ (see \cite{LR} for instance).

\section{Quasi-ordinary hypersurface singularities}
\label{secQO}

A germ $(S, 0)$ of complex analytic variety equidimensional of dimension $d$
 is {\em quasi-ordinary} (q.o.) if there exists a
finite projection $\p: (S, 0) \rightarrow (\C^d,0)$ which is a
local isomorphism outside a normal crossing divisor. If $(S, 0)$
is a hypersurface there is an embedding $(S, 0) \subset (\C^{d+1},
0) $, defined by an equation $f= 0$,  where $f \in \C \{ x_1,
\dots, x_d  \} [x_{d+1}]$ is a {\it q.o.~polynomial}: a
Weierstrass polynomial with discriminant $\D_{x_{d+1}} f$ of the
form $\D_{x_{d+1}} f = x^\d \epsilon$ for a unit $\epsilon$ in the
ring $ \C \{ x \}$ of convergent power series in the variables $x=
(x_1, \dots, x_d)$ and $\d \in \Z^d_{\geqslant 0}$.

We suppose that $(S,0)$ is analytically irreducible, that is
  $f \in \C \{ x_1, \dots, x_d  \} [x_{d+1}]$ is irreducible.
The Jung-Abhyankar theorem guarantees that the roots of a q.o.~
polynomial $f$, called {\it q.o.~branches}, are fractional power
series in $\C \{ x^{1/n_0}\}$, for $n_0 =\deg f$ (see
\cite{Abhyankar}).

\begin{Lem} \label{expo}     {\rm (see  \cite{Gau}, Prop. 1.3).}
Let  $f \in \C \{ x_1, \dots, x_n \} [x_{d+1}] $ be an irreducible
q.o.~polynomial. Let $\z$ be a root of $f$ with expansion:
\begin{equation} \label{expan}
\z = \sum \beta_{\l} x^\l.
\end{equation}
There exists $0 \ne \l_1, \dots, \l_g \in \Q^d_{\geqslant 0}$
such that if $M_0 :=\Z^{d} $ and $M_j := M_{j-1} + \Z \l_j$ for
$j=1, \dots, g$, then:
\begin{enumerate}
\item [(i)] $\beta_{\l_i} \ne 0$ and if $\beta_{\l} \ne 0$  then $\l \in M_j$ where
    $j$ is the unique integer such that  $\l_j \leqslant \l$ and
    $\l_{j+1} \nleq \l$ (where $\leqslant$ means coordinate-wise and we convey that $\l_{g +1} = \infty$).
\item [(ii)] For $j=1, \dots, g$, we have   $\l_j \notin M_{j-1}$,   hence the index
$n_j = [M_{j-1} : M_j]$ is $> 1$.
\end{enumerate}
\end{Lem}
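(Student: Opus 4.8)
The plan is to reconstruct the lattices $M_j$ and the exponents $\l_j$ inductively by "reading off" the support of $\zeta$ in increasing order with respect to the coordinate-wise partial order, imitating the classical construction of characteristic exponents for plane branches but keeping track of the lattice generated so far. First I would set $M_0 = \Z^d$ and let $\mathrm{Supp}(\zeta) = \{\l \in \Q^d_{\geqslant 0} : \beta_\l \neq 0\}$. If $\mathrm{Supp}(\zeta) \subset M_0$ the branch is "ordinary" and there is nothing to prove with $g = 0$; otherwise let $\l_1$ be a minimal element (for $\leqslant$) of $\mathrm{Supp}(\zeta) \setminus M_0$, and set $M_1 = M_0 + \Z\l_1$. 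Proceeding recursively, once $M_{j-1}$ and $\l_1, \dots, \l_{j-1}$ are defined, either $\mathrm{Supp}(\zeta) \subset M_{j-1}$, in which case we stop and put $g = j-1$, or we choose $\l_j$ minimal in $\mathrm{Supp}(\zeta) \setminus M_{j-1}$ and put $M_j = M_{j-1} + \Z\l_j$. Properties (ii) are then immediate: $\l_j \notin M_{j-1}$ by construction, so $M_j \supsetneq M_{j-1}$ and $n_j = [M_j : M_{j-1}] > 1$ (this index is finite because $\l_j \in \Q^d$, so a suitable integer multiple of $\l_j$ lies in $\Z^d \subseteq M_{j-1}$).

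The substantive content is the first half of (i): that for every $\l$ with $\beta_\l \neq 0$, if $j$ is the unique index with $\l_j \leqslant \l$ but $\l_{j+1} \nleq \l$, then $\l \in M_j$. Equivalently, I must show the process terminates (so $g < \infty$) and that no exponent of $\zeta$ escapes the lattice it "should" belong to. The key input is the hypothesis that $f$ is a \emph{quasi-ordinary} polynomial: its discriminant is $x^\delta \epsilon$ with $\epsilon$ a unit. Writing the conjugates of $\zeta$ as $\zeta^{(\omega)}$ for $\omega$ ranging over $n_0$-th roots of unity acting on the $x^{1/n_0}$, the discriminant is (up to a unit) $\prod_{\omega \neq \omega'} (\zeta^{(\omega)} - \zeta^{(\omega')})$, so each difference $\zeta^{(\omega)} - \zeta^{(\omega')}$ is a monomial times a unit, i.e.\ has a well-defined \emph{dominant exponent}. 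The plan is to argue that these dominant exponents, as $\omega, \omega'$ vary, are exactly the $\l_j$ (the characteristic exponents), and that comparing exponents of $\zeta$ with these dominant exponents forces membership in the $M_j$: if some $\beta_\l \neq 0$ with $\l \notin M_j$ for the relevant $j$, one produces a conjugate pair whose difference has an exponent not of monomial type, contradicting quasi-ordinarity. Termination follows because the support of $\zeta$ lies in $\frac{1}{n_0}\Z^d$, a lattice in which the strictly increasing chain $M_0 \subsetneq M_1 \subsetneq \cdots$ can have length at most $d \cdot \log_2 n_0$ or so — more precisely, each $M_j$ still has rank $d$ and $[M_j : M_0] \leqslant n_0^d$ divides a fixed integer, so only finitely many steps are possible.

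The main obstacle I anticipate is the careful bookkeeping that shows the inductively chosen $\l_j$ really are \emph{forced} — that any other exponent appearing below $\l_{j+1}$ (in the coordinate-wise order) must already lie in $M_j$ — and in particular that the minimality choice of $\l_j$ is independent of ambiguities when two incomparable exponents both lie outside $M_{j-1}$. One must check that all minimal elements of $\mathrm{Supp}(\zeta) \setminus M_{j-1}$ generate the same lattice over $M_{j-1}$; this is where quasi-ordinarity is genuinely used, via the structure of the conjugate differences, and it is the step I would present in most detail, referring to Gau's argument in \cite{Gau} for the standard manipulation with conjugates. The remaining verifications — finiteness of the indices $n_j$, uniqueness of the index $j$ attached to a given $\l$, and the termination count — are routine once the lattice-membership claim is in hand.
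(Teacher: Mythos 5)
First, a point of comparison: the paper itself gives no proof of this lemma --- it is quoted from \cite{Gau}, Prop.~1.3 --- so your sketch has to be measured against the standard argument (Gau, Lipman), which is indeed the conjugate/discriminant mechanism you point to. Your ingredients are the right ones: Jung--Abhyankar roots in $\C\{x^{1/n_0}\}$, the discriminant being, up to a unit, the product of differences of conjugates, so that each such difference is a monomial times a unit; the finiteness of the chain $M_0\subsetneq M_1\subsetneq\cdots$ inside $\frac{1}{n_0}\Z^d$; and the remark that dominant exponents of conjugate differences occur in the support of $\z$.

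As a proof, however, the proposal has a genuine gap at exactly the decisive step, and the gap cannot be closed by ``referring to Gau's argument'', since Gau's Prop.~1.3 is precisely the statement to be proved. Two things need an actual argument. First, for the index $j$ in (i) to exist and be unique you need the characteristic exponents to be totally ordered, $\l_1\leqslant\cdots\leqslant\l_g$, so that $\{i\mid\l_i\leqslant\l\}$ is an initial segment; your greedy choice of ``a minimal element of $\operatorname{Supp}(\z)\setminus M_{j-1}$'' gives no comparability, and the assertion that this set has a unique minimal element is essentially equivalent to the lemma, so the reformulation you flag as the ``main obstacle'' gains nothing. The standard remedy is to define the $\l_i$ not greedily but as the dominant exponents of the differences $\z-\z_\theta$, where $\z_\theta=\sum\beta_\l\theta(\l)x^\l$ runs over the conjugates, $\theta$ being a character of $\frac{1}{n_0}\Z^d$ trivial on $\Z^d$: each difference is a monomial times a unit because it divides the discriminant $x^{\delta}\epsilon$ in the UFD $\C\{x^{1/n_0}\}$, and writing $\z_\theta-\z_{\theta'}=(\z-\z_{\theta'})-(\z-\z_\theta)$ forces any two dominant exponents to be comparable. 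Second, the membership $\l\in M_j$ in (i) must be proved by a separation-of-characters argument that you only gesture at: if $\beta_\l\neq0$, $j$ is maximal with $\l_j\leqslant\l$, and $\l\notin M_j$, choose $\theta$ trivial on $M_j$ with $\theta(\l)\neq1$ (possible because characters of the finite group $\frac{1}{n_0}\Z^d/M_j$ separate points); then $\l$ lies in the support of $\z-\z_\theta$, whose dominant exponent is some $\l_i$ with $\theta(\l_i)\neq1$, hence $i\geqslant j+1$, and dominance gives $\l_{j+1}\leqslant\l_i\leqslant\l$, contradicting the choice of $j$. With this construction (ii) is no longer ``immediate'' but follows from the same device: a $\theta$ realizing $\l_j$ as dominant exponent is trivial on every exponent of $\z$ not $\geqslant\l_j$, in particular on $\Z^d$ and on $\l_1,\dots,\l_{j-1}$, hence on $M_{j-1}$, while $\theta(\l_j)\neq1$. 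Your termination count and the finiteness of the indices $n_j$ are fine; but until the comparability and the character argument are written out, the proposal is a correct plan rather than a proof.
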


\begin{definition}
The exponents $\l_1 , \dots, \l_g $ in Lemma \ref{expo} above
 (resp. the monomials $x^{\l_1}, \dots, x^{\l_g}$) are called {\em characteristic}
of the q.o.~branch $\z$. We denote by  $M$  the lattice $M_g$ and
we call it the lattice associated to the q.o.~branch $\z$. We
denote its dual lattice by $N$. For convenience we denote $\l_0
:=0$.
\end{definition}

Without loss of generality we relabel the variables $x_1, \dots,
x_d$ in such a way that if  $\l_j = ( \l_j^1, \dots, \l_j^d) \in
\Q^d$ for $j=1, \dots, g$, then we have:
\begin{equation} \label{lex}
(\l_1^1, \dots, \l_g^1) \geqslant_{\mbox{\rm lex}} \cdots
\geqslant_{\mbox{\rm lex}} (\l_1^d, \dots, \l_g^d),
\end{equation}
where $\geqslant_{\mbox{\rm lex}}$ is lexicographic order. The
q.o.~branch $\z$
 is normalized if
$\l_1$ is not of the form $(\l_1^1, 0, \dots, 0)$ with $\l_1^1 <
1$. Lipman proved that the germ $(S,0)$ can be parametrized by a
normalized q.o.~branch (see  \cite{Gau}, Appendix).
 We assume from now on that the q.o.~branch $\z$ is normalized.

The semigroup $\Z^d_{\geqslant 0}$ has a minimal set of generators
$e_1,\ldots,e_d$ which is a basis of the lattice $M_0$. The dual
basis of the dual lattice $N_0$ spans a regular cone $\s$ in
$N_{0,\R}$. It follows that $\Z^d_{\geqslant 0} = \s^\vee \cap
M_0$, where $\s^\vee = \R^d_{\geqslant 0}$ is the dual cone of
$\s$. The $\C$-algebra $\C \{  x_1, \dots, x_d \} $ is isomorphic
to $\C \{ \s^\vee \cap M_0 \} $. This isomorphism sends the
monomial $x_1^{\a_1} \cdots x_d^{\a_d}$ in the monomial $X^\a \in
\C \{ \s^\vee \cap M_0 \}$ if $\a = \sum_{i=1}^d \a_i e_i$. The
local algebra ${\mathcal O}_S = \C\{x_1, \dots,
x_d\}[x_{d+1}]/(f)$ of the singularity $(S,0)$ is isomorphic to
$\C\{\s^\vee \cap M_0\}[\z]$. By Lemma \ref{expo} the series  $\z
$ can be viewed as an element $\sum \beta_\l X^\l$ of the algebra
$ \C\{\s^\vee \cap M\}$.

\begin{Lem} \label{normal}
 {\rm (See \cite{GP4}).} The homomorphism ${\mathcal O}_S \rightarrow
\C\{\s^\vee \cap M\}$ is the inclusion of ${\mathcal O}_S$ in
its integral closure in its field of fractions.
\end{Lem}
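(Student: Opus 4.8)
The plan is to exhibit $\C\{\s^\vee \cap M\}$ as a ring lying between ${\mathcal O}_S$ and its normalization, and then to show it is itself normal and module-finite over ${\mathcal O}_S$; the claim follows since the integral closure is the unique such ring. First I would record that $\C\{\s^\vee \cap M\}$ is normal: the cone $\s$ is strictly convex and rational for the lattice $M$ (its $\Q$-span is all of $M_\Q=M_{0,\Q}$ by construction of the $\l_j$), so $\C\{\s^\vee \cap M\}$ is the local ring at the distinguished point of the affine toric variety $Z_{\s,N}$, which is normal by the discussion in Section \ref{sec-tor}. Next I would check that the homomorphism ${\mathcal O}_S \to \C\{\s^\vee \cap M\}$ sending $x_i \mapsto X^{e_i}$ and $\z \mapsto \sum_\l \beta_\l X^\l$ is well defined and injective: it is well defined because $\z$, viewed via Lemma \ref{expo} as an element of $\C\{\s^\vee\cap M\}$, is by construction a root of $f$, so $f(\z)=0$ there; it is injective because it is the restriction of the inclusion of fraction fields $\mathrm{Frac}({\mathcal O}_S) = \C(\s^\vee\cap M_0)(\z) \hookrightarrow \C(\s^\vee \cap M)$, the two fields in fact coinciding since $M$ is generated over $M_0$ by the $\l_j$ and each $\l_j$ is, up to lower-order characteristic data, realized by a monomial appearing in $\z$ (equivalently, by Abhyankar's theorem the roots of $f$ generate a Kummer extension whose exponent lattice is exactly $M$).

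The key point is module-finiteness: I want to show $\C\{\s^\vee \cap M\}$ is a finitely generated ${\mathcal O}_S$-module. Since ${\mathcal O}_S \supseteq \C\{x_1,\dots,x_d\} \cong \C\{\s^\vee \cap M_0\}$, and $\C\{\s^\vee \cap M\}$ is a finitely generated module over $\C\{\s^\vee \cap M_0\}$ — indeed $M/M_0$ is a finite abelian group of order $n_1\cdots n_g$, and coset representatives $\l$ of $M/M_0$ inside $\s^\vee \cap M$ give module generators $X^\l$, using that $\C\{\s^\vee\cap M\}$ is a finite direct sum $\bigoplus \C\{\s^\vee \cap M_0\}\cdot X^\l$ after clearing denominators — it follows a fortiori that $\C\{\s^\vee \cap M\}$ is a finite ${\mathcal O}_S$-module. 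In particular every element of $\C\{\s^\vee\cap M\}$ is integral over ${\mathcal O}_S$, so $\C\{\s^\vee\cap M\}$ is contained in the integral closure $\overline{{\mathcal O}_S}$ of ${\mathcal O}_S$ in $\mathrm{Frac}({\mathcal O}_S)$.

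Finally, for the reverse inclusion: $\C\{\s^\vee\cap M\}$ is normal and has the same fraction field as ${\mathcal O}_S$, hence it is integrally closed in $\mathrm{Frac}({\mathcal O}_S)$; since it contains ${\mathcal O}_S$ it must contain $\overline{{\mathcal O}_S}$. Combining the two inclusions gives $\C\{\s^\vee\cap M\} = \overline{{\mathcal O}_S}$, which is the assertion. I expect the only genuinely delicate point to be the identification of the fraction fields — i.e.\ that adjoining $\z$ to $\C(\s^\vee\cap M_0)$ produces exactly the field $\C(\s^\vee\cap M)$ and no more and no less; this is where one must use the precise structure of the characteristic exponents $\l_1,\dots,\l_g$ and the lattices $M_j = M_{j-1}+\Z\l_j$ from Lemma \ref{expo}, rather than any soft argument. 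Everything else is formal once normality of the toric local ring and the finiteness of $M/M_0$ are in hand.
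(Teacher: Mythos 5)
The paper offers no proof of this lemma---it is quoted from \cite{GP4}---but your argument is exactly the standard one behind that citation and it is correct: $\C\{\s^\vee\cap M\}$ is normal, is a finite ${\mathcal O}_S$-module (indeed free of rank $[M:M_0]$ over $\C\{x\}=\C\{\s^\vee\cap M_0\}$, since for $\s^\vee=\R^d_{\geqslant 0}$ each coset of $M/M_0$ meets $\s^\vee\cap M$ in a single translate of $\Z^d_{\geqslant 0}$, generated by the representative with coordinates in $[0,1)$), and has the same fraction field as ${\mathcal O}_S$, which forces it to coincide with the integral closure. The one step you leave schematic---that $\mathrm{Frac}({\mathcal O}_S)$ already contains every $X^m$, $m\in M$---is indeed where Lemma \ref{expo} enters, via the Kummer/Galois argument: an automorphism of $\C(\{x\})(x_1^{1/n_0},\dots,x_d^{1/n_0})$ over $\C(\{x\})$ acts on monomials through a character of $M/M_0$ and fixes $\z$ if and only if that character kills every exponent $\l$ with $\beta_\l\neq 0$; since these exponents generate $M$ over $M_0$, the stabilizer of $\z$ corresponds exactly to the subfield generated by the monomials with exponents in $M$, and a degree count gives the equality of fields.
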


The previous Lemma shows that the normalization $(\bar{S}, 0)$ of
a q.o.~hypersurface singularity  $(S, 0)$ is the germ of the
toric variety $\bar{S} = Z^{\s^\vee \cap M}$ at the distinguished
point.
 More generally, the
normalization of a q.o.~singularity, non necessarily
hypersurface, is a toric singularity (see \cite{PPP04} and \cite{PP-C}).
If $n: (\bar{S}, 0) \rightarrow (S, 0)$ is the normalization map
the composite
\begin{equation}  \label{composite}
     (\bar{S}, 0) \stackrel{n}{\rightarrow} (S, 0)  \stackrel{\p}{\rightarrow}   (\C^d, 0) ,
\end{equation}
 is a q.o.~projection,
since
it is the toric map defined by the
inclusion of algebras, $\C\{\s^\vee \cap M_0\} \subset \C\{
\s^\vee \cap M \}$,  induced by the finite index lattice extension $M_0 \subset M$ (see \cite{Oda}).

If $\theta \leqslant \s$,  the toric map between the  orbit
closures
 $Z^{\s^\vee \cap M \cap \theta ^\perp} \rightarrow Z^{\s^\vee \cap M_0 \cap \theta ^\perp}$
is the composite of
\begin{equation} \label{tor-sec}
 ( Z^{\s^\vee \cap M \cap \theta ^\perp}, 0)
  \stackrel{n_{\theta }}{\rightarrow} (S_\theta, 0)   \stackrel{\p_{\theta}}{\rightarrow}
 (Z^{\s^\vee \cap M_0 \cap \theta ^\perp}, 0),
\end{equation}
 where $n_{\theta}$ and $\p_{\theta}$ denote   respectively the restrictions of $n$ and $\p$  and
$(S_{\theta}, 0)$ is the coordinate section of $(S, 0)$ given by:
$S_\theta :=S\cap\{x_{j}=0\ |\ \mbox{ for }1\leqslant j\leqslant
d\mbox{ and } e_j\notin \theta^\perp \}$.
It follows that $(S_\theta, 0)$ is q.o.~(see \cite{Lipman2}). The
germ   $(S_\theta, 0)$  is a q.o.~hypersurface of dimension $\dim
\theta^\perp$   parametrized by the series
$\z_\theta := \sum_{\l\in\s^\vee \cap \theta^\perp}\beta_\l X^\l$.

\begin{definition}
For $\theta \leqslant\s$, we call $S_\theta$ the q.o.~coordinate
section associated to $\theta$, we denote by $M(\theta,\z)$ the
lattice associated to the q.o.~branch $\z_\theta$ and by
$N(\theta,\z)$ its dual lattice. \label{notlattQO}
\end{definition}
Notice that the dual cone of $\s^\vee \cap \theta^\perp$ is  equal
to the image $\s/\theta\R$  of the cone $\s$ in the quotient
vector space $N_\R/\theta\R$. We have finite index lattice
extensions $ M_0(\theta):=M_0\cap\theta^\perp \hookrightarrow
M(\theta, \z) \hookrightarrow M(\theta):=M\cap\theta^\perp$.  The
map $n_{\theta}$ in (\ref{tor-sec}) is a ramified covering  with
$[ M\cap\theta^\perp : M(\theta, \z)]$ sheets, which is unramified
over the torus. The index $[ M\cap\theta^\perp :      M(\theta,
\z)]$ is equal to the number of irreducible components of the germ
of $S$ at a generic point of $S_\theta$ (cf.~Proposition 4.5.4 in
\cite{Lipman2}).

The elements of $M$ defined by: $ {\g}_1 =  \l_1$ and $ {\g}_{j+1}
- n_j {\g}_{j} = \l_{j+1} -  \l_{j}$ for $ j= 1, \dots, g-1$, span
the semigroup $\Gamma := \Z^d_{\geqslant 0} +  \g_1 \Z_{\geqslant
0} + \cdots + \g_g \Z_{\geqslant 0} \subset \s^\vee \cap M$. The
semigroup $\Gamma$ defines an analytic invariant of the germ
$(S,0)$ (see \cite{Jussieu, Grenoble, PPP04}).
\begin{definition}      \label{zs}
The {\em monomial variety} associated to $(S, 0)$ is the toric variety $Z(S):= \Spec \C [ \Gamma ]$.
\end{definition}

Following Teissier's approach \cite{Goldin}, \cite{T},  the
singularity $(S,0)$ can be presented, after re-embedding  in a
suitable affine space of larger dimension, as the generic fiber of
a $1$-parametrical deformation with special fiber equal to the
monomial variety $Z(S)$ (see \cite{GP4}). This family is
equisingular in the sense that one toric morphism of the affine
space provides a simultaneous embedded resolution of singularities
of the family (see \cite{Goldin} for the one dimensional case,
\cite{GP4} in the q.o.~case and \cite{T} for related results in a
more general context). The normalizations of $(S,0)$ and of
$(Z(S),0)$ coincide. See \cite{GP4,  PedroNash} for the properties
of the equisingular deformation of $Z(S)$ with generic fiber $(S,
0)$.

\section{Arcs and jets on a quasi-ordinary hypersurface}
\label{ArcsAndJetsQO}

In this Section we study the arcs in the q.o.~hypersurface
$(S,0)$  by using the  toric structure of $(\bar S,0)$, following
the approach of \cite{Ishii-fourier, Ishii-crelle, CoGP}.  We keep
notations of Section \ref{secQO}. Recall that the normalization
$\bar{S}$ of $S$  is equal to the toric variety $Z^{\s^\vee \cap
M}$. The set $H_{\bar{S}}^*$  of arcs of $H_{\bar{S}}$ with
generic point in the torus is \[ H_{\bar{S}}^* =\{ h\in
H_{\bar{S}} \, |\ X^e \circ h\neq 0,\, \forall e \in \s^\vee \cap
M \}.\] Any arc  $\bar{h}  \in H^*_{\bar{S}}$ defines two group
 homomorphisms $\nu_{\bar{h}} : M \rightarrow \Z $ and $\omega_{\bar{h}}: M
\rightarrow \C[[t]]^* $ by \[
 {X}^m \circ {\bar{h}} = t^{\nu_{\bar{h}} (m)}
\omega_{\bar{h}} (m), \mbox{ for } m \in M .\] If $m \in \s^\vee
\cap M$ then we have that $\nu_{\bar{h}} (m)
>0$ hence $\nu_{\bar{h}}$ belongs to $\stackrel{\circ}{\s} \cap N$.
 It follows that  $\omega_{\bar{h}}$
 defines an arc in the torus $T_N$ (see
\cite{CoGP}).
We define similarly the set $H_S^* \subset H_S$:
\begin{definition} (see \cite{Ishii-fourier,Ishii-crelle, CoGP})
\label{estrella} $H_S^*=\{h\in H_S\ |\ x_i\circ h\neq 0,\
i=1,\ldots,d+1\}$.
\end{definition}
The set $  H_S^*$ consists of those arcs $h \in H_S$ such that
there exists $\bar{h} \in H_{\bar{S}}^*$ such that $h=n\circ\bar
h$.  Notice that if $h \in H_S^*$ then the generic point of $h$ is
not contained in the singular locus   $\mbox{{\rm Sing}} \, S$  of
$S$, since $\mbox{{\rm Sing}} \, S \cap \p^{-1} ((\C^*)^d) =
\emptyset$ by definition of $\p$.  By the valuative criterion of
properness applied to the normalization map, there exists a unique
arc $\bar h\in H_{\bar{S}}$ such that $h=n\circ\bar h$. Since
(\ref{composite}) defines a q.o.~projection it follows that
$\bar{h} \in H_{\bar{S}}^*$ and $h \mapsto \bar{h}$ defines a
bijective correspondence between the sets $H_{S}^* $ and
$H_{\bar{S}} ^*$.

\begin{definition}
For any $\nu \in \stackrel{\circ}{\s} \cap N$
we define the sets:
\[
H^*_{\bar{S}, \nu} = \{ \bar{h} \in H^*_{\bar{S}} \mid
\nu_{\bar{h} } = \nu \} \mbox{ and }  H^*_{{S}, \nu} = \{ {h} \in
H^*_{{S}} \mid  \exists \bar{h} \in H^*_{\bar{S}} :\,  n \circ
  \bar{ h}  = h, \, \nu_{\bar{h} } = \nu \}.
\]
\end{definition}

Ishii noticed that the space of arcs in the torus acts on the arc
space of a toric variety (see \cite{Ishii-algebra,Ishii-crelle}).
The set $H_{\bar{S},\nu}^*$ is an orbit of
    the action of the arc space of the torus of $\bar S$
    (see \cite{Ishii-algebra}).
The map $\bar{h} \mapsto n \circ
  \bar{ h}$ defines a bijective correspondence between the sets
$H^* _{\bar{S}, \nu}$ and $H^*_{S, \nu}$. We usually denote the set
$H^* _{S, \nu}$ by $H^* _\nu$.
 The set $H^*_{\bar{S}}$ is invariant for this action of the arc
space of the torus and we obtain the partitions: $H^*_{\bar{S}} =
\bigsqcup_{\nu \in \stackrel{\circ}{\s} \cap N}
H_{\bar{S},\nu}^*$ and
$H_S^*=\bigsqcup_{\nu\in\stackrel\circ\s\cap
    N} \, H_{S,\nu}^*$.

Notice that $H^*_{S}$ can be defined also as the set of arcs $h
\in H_{S}$ such that the arc $\p \circ h \in H_{\A_\C^d}$ has
generic point in the torus, where we consider the affine space
$\A_\C^d$ with the structure of toric variety $\A_\C^d =
Z^{\s^\vee \cap M_0}$. If $h \in H_{S}$ then $\p \circ h$ factors
through a unique minimal orbit closure,  of the form $Z^{ \s^\vee
\cap \theta^\perp \cap M_0 }$,  in such a way that $\p \circ h$ has
generic point in the torus of $Z^{\s^\vee \cap \theta^\perp \cap
M_0 }$. In this case, it follows from the properties of
(\ref{composite}) that
 the  arc $h$ factors
through $(S_\theta, 0)$ and belongs to the set $H_{S_\theta}^*$.
We deduce from this observations the following partition of the
arc space  $H_S=\bigsqcup_{\theta\leqslant\s}H_{S_\theta}^*$.

\section{Statement of the main results}
 \label{mainQO}

We state in this section the main results of the paper. We keep
notations of the previous sections.

\begin{Not} \label{dplusg}
We denote by $e_1, \dots, e_d$ the elements of the canonical basis
of $\Z^d$. We also denote the characteristic exponent $\l_{j}$ of
the q.o.~branch $\z$ by  $ e_{d+j}$,  for $j=1, \dots, g$. We
set $e_0 := \infty$ and $e_{d+g+1}:= \infty$.
\end{Not}

\begin{definition}
We introduce the following subsets of $\s^\vee \cap M$ associated
to the q.o.~branch $\z$ parametrizing the germ $(S,0)$.
\begin{equation}       \label{j-kQO}
\J_k := \{ e_{j_1} + \cdots + e_{j_k}  \  | \ e_{j_1} \y \cdots \y
e_{j_k}  \ne 0, \,   1 \leqslant j_1, \dots, j_{k-1} \leqslant  d,
\mbox{ and } \ 1 \leqslant j_k \leqslant d+g  \}.
\end{equation}
We call the  monomial ideal of $\C \{ \s^\vee \cap M \}$ defined
by (\ref{j-kQO})
 the {\em $k^{th}$-logarithmic
jacobian ideal} of $(S,0)$ relative to the q.o.~projection $\p$,
for $k=1,\dots,d$. We abuse slightly of notation by denoting this
ideal also by $\J_k$. We denote by $\Sigma_k $  (resp. by
$\ord_{\mathcal{J}_k }$) the dual subdivision of $\s$ (resp. the
support function) associated to the Newton polyhedron of the
$k^{th}$-logarithmic jacobian ideal ${\mathcal J}_k $, for $k=1,
\dots, d$.
 \label{idealesJac}
\end{definition}

\begin{remark}
These ideals can be defined in terms of holomorphic differential
forms with logarithmic poles outside the torus of $\bar S$ (see
Section \ref{torQO}). The terminology is inspired by \cite{LR} and
\cite{CoGP}.
\end{remark}

Now we introduce an auxiliary series to study $P_{
{\geom}}^{(S, 0)}(T)$.
 By Greenberg's
Theorem, for any $\theta \leqslant \s$ we have that $j_s
(H_{S_{\theta}})$ is a constructible subset of the $s$-jet space
of $(S,0)$. It follows that
\[ j_s(H_S)\setminus\bigcup_{0\neq\theta\leqslant\s}j_s(H_{S_\theta}^*) =
j_s(H_S)\setminus\bigcup_{0\neq\theta\leqslant\s}j_s(H_{S_\theta})
\] is also a constructible subset. Taking the images of theses
sets in the Grothendieck ring it is natural to consider the
auxiliary Poincar\'e series,
\begin{equation} \label{auxiliarPQO}
P(S):=\displaystyle\sum_{s\geqslant
0}\big[j_s(H_S)\setminus\displaystyle\bigcup_{0
\neq\theta\leqslant\s}
    j_s(H_{S_\theta})\big]T^s \in K_0 ({  \Var}_\C)
    [[T]],
\end{equation}
which  measures the jets of arcs through $(S,0)$ which do not
factor through any proper q.o.~coordinate section $(S_\theta,
0)$, $0 \ne \theta \leqslant \s$. We will show that the
coefficient of $T^s$ in the series $P(S)$ is a finite sum of
classes $[j_s (H^* _\nu)]$. Notice that if  $\t \leqslant \s$ the
series  $P(S_\t)$ is defined similarly by formula
(\ref{auxiliarPQO}).
\begin{Pro} {\rm (cf.~Prop. 4.1 \cite{CoGP})}
 \label{descompPgeomQO}  We have the equality $
P_{
{\geom}}^{(S,0)}(T)=\sum_{\theta\leqslant\s}P(S_\theta)$.
\end{Pro}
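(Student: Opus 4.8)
The plan is to use the partition $H_S = \bigsqcup_{\theta \leqslant \s} H_{S_\theta}^*$ established at the end of Section \ref{ArcsAndJetsQO}, and to show that this partition descends, jet by jet, to a decomposition of the constructible set $j_s(H_S)$ whose pieces are exactly the sets appearing in the definition \eqref{auxiliarPQO} of the auxiliary series $P(S_\theta)$. Concretely, first I would fix $s \geqslant 0$ and observe that, since every arc of $H_S$ factors through a unique minimal coordinate section $S_\theta$ with the arc lying in $H_{S_\theta}^*$, taking $s$-jets gives $j_s(H_S) = \bigcup_{\theta \leqslant \s} j_s(H_{S_\theta}^*)$, though this union need not be disjoint. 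To make it disjoint, I would peel off, for each $\theta$, the part of $j_s(H_{S_\theta}^*)$ that also comes from a proper coordinate section of $S_\theta$: define $W_{s,\theta} := j_s(H_{S_\theta}) \setminus \bigcup_{0 \neq \tau \leqslant \bar\sigma} j_s((S_\theta)_\tau)$, where $\bar\sigma$ denotes the relevant face poset of $S_\theta$ (so $W_{s,0}$ is exactly the $s$-coefficient set of $P(S)$). The claim to prove is that $j_s(H_S) = \bigsqcup_{\theta \leqslant \s} W_{s,\theta}$, as a partition into constructible sets; summing the classes $[W_{s,\theta}]$ over $\theta$ and then over $s$ then yields $P_{\geom}^{(S,0)}(T) = \sum_{\theta \leqslant \s} P(S_\theta)$ directly from additivity of the class map on the Grothendieck ring.

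For the disjointness and exhaustiveness of $\{W_{s,\theta}\}_\theta$, the key point is a compatibility between the coordinate-section structure and truncation of arcs. An arc $h \in H_S$ factors through $S_\theta$ exactly when $\p \circ h$ factors through the orbit closure $Z^{\s^\vee \cap \theta^\perp \cap M_0}$, i.e. when $x_j \circ h = 0$ for all $j$ with $e_j \notin \theta^\perp$; the \emph{minimal} such $\theta$ is determined by which coordinates $x_j \circ h$ vanish identically. For an $s$-jet $\gamma \in j_s(H_S)$, one would like to read off from $\gamma$ the minimal coordinate section through which \emph{some} arc lifting $\gamma$ factors minimally. The subtlety, already flagged in the introduction via the Whitney-umbrella example, is that $j_s(H_S) \neq j_s(H_S^*)$ in general, so an $s$-jet of $H_S$ need not lift to an arc in $H_S^*$; but it does lift to an arc in some $H_{S_\theta}^*$, and I would argue that the minimal such $\theta$ is an invariant of the jet $\gamma$ itself (it is the minimal $\theta$ with $\gamma \in j_s(H_{S_\theta})$ and $\gamma \notin j_s((S_\theta)_\tau)$ for all $0 \neq \tau$), which gives a well-defined function $\theta(\gamma)$ partitioning $j_s(H_S)$ into the sets $W_{s,\theta} = \{\gamma : \theta(\gamma) = \theta\}$. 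Constructibility of each $W_{s,\theta}$ follows from Greenberg's theorem together with closure of constructible sets under finite Boolean operations.

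The main obstacle I anticipate is precisely the non-surjectivity of $j_s$ onto jets that "should" lift into the torus: one must rule out the pathology where an $s$-jet $\gamma$ lies in $j_s(H_{S_\theta})$ and in $j_s(H_{S_{\theta'}})$ for two incomparable faces $\theta, \theta'$ with neither piece being removed by the "$\setminus \bigcup_{\tau}$" correction at the appropriate level — one needs the correction terms to interlock correctly so that each jet is counted once and only once. The clean way to handle this is to work arc-wise first: establish $H_S = \bigsqcup_\theta \{h \in H_{S_\theta}^* : h \text{ does not factor through any proper section of } S_\theta\}$ (which is immediate from the minimal-section property), then note $j_s$ of the $\theta$-piece is contained in $W_{s,\theta}$, while conversely any $\gamma \in W_{s,\theta}$ lifts to an arc in $H_{S_\theta}$ not factoring through a proper section — hence to an arc in $H_{S_\theta}^*$ lying in the $\theta$-piece. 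This two-sided containment, applied simultaneously for all $\theta$, gives the partition; the set-theoretic bookkeeping with the face poset of each $S_\theta$ (noting that sections of $S_\theta$ are again coordinate sections of $S$, corresponding to faces $\tau \leqslant \s$ with $\theta \leqslant \tau$) is the routine but delicate part. Finally, summing in $K_0(\Var_\C)[[T]]$ coefficient by coefficient completes the argument. This is the analogue of Proposition 4.1 of \cite{CoGP}, with the toric-orbit stratification there replaced by the coordinate-section stratification here.
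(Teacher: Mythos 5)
Your reduction of the proposition to the set-theoretic claim $j_s(H_S)=\bigsqcup_{\theta\leqslant\s}W_{s,\theta}$, with $W_{s,\theta}:=j_s(H_{S_\theta})\setminus\bigcup_{\theta<\tau\leqslant\s}j_s(H_{S_\tau})$, is the right formulation, and two of your three ingredients are fine: exhaustion (choose a face $\theta$ maximal with $\gamma\in j_s(H_{S_\theta})$; note it must be maximal, not minimal) and the inclusion $W_{s,\theta}\subseteq j_s(H^*_{S_\theta})$. But the step you call the ``clean way'' is broken: the containment $j_s(H^*_{S_\theta})\subseteq W_{s,\theta}$ is false. For instance, pick $\nu\in\stackrel{\circ}{\s}\cap N$ with $\langle\nu,e_i\rangle>s$ for $i=1,\dots,d$ and $\langle\nu,\l_1\rangle>s$; every $h\in H^*_{S,\nu}\subset H^*_S$ then has $j_s(h)=0\in j_s(H_{S_\s})$, so $j_s(h)\notin W_{s,0}$. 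More generally, as soon as some coordinates of an arc of $H^*_{S_\theta}$ have order $>s$, its $s$-jet typically also arises from a deeper coordinate section. For the same reason the arc-level partition $H_S=\bigsqcup_\theta H^*_{S_\theta}$ cannot by itself give what you want: $j_s$ does not carry disjoint sets to disjoint sets, and the images $j_s(H^*_{S_\theta})$ genuinely overlap.

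The real content of the proposition is exactly the disjointness you flag as the ``main obstacle'' and then merely assert (``the minimal such $\theta$ is an invariant of the jet''): one must show that if a jet lifts to arcs on two incomparable sections $S_\theta$ and $S_{\theta'}$, then it lifts to an arc on a strictly deeper section (in effect on $S_{\theta\vee\theta'}$), so that $W_{s,\theta}\cap W_{s,\theta'}=\emptyset$. This is not routine bookkeeping: for a non-normal germ there is no formal reason why a truncation admitting lifts on $S_\theta$ and on $S_{\theta'}$ admits one on $S_\theta\cap S_{\theta'}$, and your argument never produces such a lift. In the paper's framework this is precisely what the orbit decomposition $H^*_{S_\theta}=\bigsqcup_\nu H^*_{S_\theta,\nu}$ combined with the $\ell_\nu^s$-machinery supplies (Lemma \ref{crucial} together with Propositions \ref{Prop-equivQO} and \ref{thetapropositionQO}: $j_s(H^*_\nu)$ meets $j_s(H_{S_\tau})$ if and only if $\ell_\nu^s\subseteq\tau^\perp$, in which case it is contained in it), applied to each q.o.\ section; the analogous input is what underlies Prop.~4.1 of \cite{CoGP}. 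Without an argument of this kind your proposal only yields $j_s(H_S)=\bigcup_\theta W_{s,\theta}$, which is not enough to sum the classes in $K_0(\Var_\C)$.
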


\begin{remark}  $\,$ \label{P-codim1QO}
\begin{enumerate}
 \item[(i)]    The  series $P(S_\s)$ takes into account those
jets of arcs in $H_S$ which truncate to $0$. We have that
$P(S_\s)=\sum_{s\geqslant 0}T^s =  (1-T)^{-1}$. \item[(ii)]    If
$\theta\leqslant\s$ is of codimension one then $S_{\theta}$ is a
plane curve. We have that $P(S_\theta)=\frac{\L-1}{1-\L
T}\frac{T^m}{1-T^m}$, where $m$ is the multiplicity of $S_\theta$
(see Proposition 10.2.1  in \cite{DL2}).
\end{enumerate}
\end{remark}

Recall that if $\Sigma$ is a fan then $\Sigma^{(1)}$ denotes the
set of one dimensional cones of $\Sigma$ (see Section
\ref{sec-tor}).
\begin{definition} {\rm (cf.~Notation 4.7 \cite{CoGP})} \label{jotak-bisQO}
 The following maps   are piece-wise linear functions  on the cone $\s$:
\[
\left\{
\begin{array}{lcccccccl}
\f_1    &   :=  &   \mathsf{ ord}_{\mathcal J_1  } &  \mbox{ and }
& \f_k  & :=  &   \mathsf{ ord}_{\mathcal J_k  }  - \mathsf{
ord}_{\mathcal J_{k-1}  } &  \mbox{ for }  k=2,\dots,d,
   \\
 \Psi_1 &:=& 0 & \mbox{ and } &  \Psi_{k} &:= & (k-1) \, \ord_{\J_k}-k \, \ord_{\J _{k-1}} & \mbox{ for }  k=2, \dots,
 d,
\end{array} \right.
\]
We define $\phi_0:=0$ and $\phi_{d+1}:=\infty$ by convenience.
If $\r \subset \s$ is a cone of dimension one, we denote by
$\nu_\r$ the generator of the semigroup $\r \cap N$. We define
the finite set:
\begin{equation} \label{BLQO}
    B(S) : = \left\{ (d,1) \right\}  \cup  \bigcup_{k=1}^d  \left \{ ({\Psi_k}(\nu_\r), { \f_{k}} (\nu_\r))
  \,   \mid \r \in \cup_{i=1}^k\Sigma_i^{(1)},\, \mbox{and } \r \cap
\stackrel{\circ}{\s} \ne \emptyset \mbox{ if }k<d\,
     \right\}.
\end{equation}
This definition applies for the q.o.~sections $S_\theta$, for $0
\leqslant \theta < \s$. For $\theta = \s$ we set $B(
S_\theta) := \{ (0, 1) \}$.
 \end{definition}

\begin{definition} \label{Ak} {\rm (cf.~Def. 5.4 \cite{CoGP})}   For $0 \leqslant k \leqslant d$ we set
\[A_k := \{ (\nu, s) \in (\stackrel{\circ}{\s} \cap N) \times
\Z_{>0} \, \mid \, \f_k (\nu) \leqslant  s < \f_{k+1} (\nu) \}.\]
\end{definition}

\begin{The}\label{keyQO}     {\rm (cf.~Th. 7.1 \cite{CoGP})}
If  $(\nu, s) \in A_k$ then the jet space $j_s( H^*_\nu)$  is a
locally closed subset of $H_s(S)$ isomorphic to $\{ 0 \}$ if $k=0$
or to $
 (\C^*)^{k}
\times \A_\C^{s k - \mathsf{ord}_{\mathcal J_{k}} (\nu)}$ if $1
\leqslant k \leqslant d$.
\end{The}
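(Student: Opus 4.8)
The plan is to compute $j_s(H^*_\nu)$ directly from the description of arcs in $H^*_S$ via the toric structure of $(\bar S,0)$, following the strategy of \cite{CoGP} Theorem 7.1 but making the extra bookkeeping forced by the $\zeta$-parametrization explicit. First I would fix $(\nu,s)\in A_k$ and recall that, by the bijection between $H^*_S$ and $H^*_{\bar S}$ and the orbit decomposition, an arc $h\in H^*_\nu$ corresponds to a data $(\nu,\omega_{\bar h})$ where $\omega_{\bar h}\colon M\to\C[[t]]^*$ is an arc in the torus $T_N$; composing with $n$ and the embedding $(S,0)\subset(\C^{d+1},0)$ one gets $x_i\circ h = t^{\nu(e_i)}\omega_{\bar h}(e_i)(t)$ for $i=1,\dots,d$ and $x_{d+1}\circ h = t^{\nu(\zeta)}\bigl(\text{unit}\bigr)$, where the last expression is governed by the characteristic exponents $\lambda_1,\dots,\lambda_g=e_{d+1},\dots,e_{d+g}$ through the expansion $\zeta=\sum_\lambda\beta_\lambda X^\lambda$ and the values $\nu(e_{d+j})=\langle\nu,\lambda_j\rangle$. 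The first step is therefore to write the $s$-jet of $h$ as a polynomial map whose variables are the finitely many coefficients of the $\omega_{\bar h}(e_i)$ modulo the appropriate power of $t$, and to identify which coefficients are free, which are constrained to be nonzero (the leading ones), and which are determined by the earlier ones through the relations coming from $f=0$ / the $\zeta$-expansion.

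Second, I would run the combinatorial dichotomy: the hypothesis $(\nu,s)\in A_k$, i.e. $\phi_k(\nu)\le s<\phi_{k+1}(\nu)$, is exactly the condition that controls how many of the exponents among $e_1,\dots,e_d,e_{d+1},\dots,e_{d+g}$ have $\langle\nu,\cdot\rangle$-value $\le s$; by the convexity statements collected in Section \ref{conv-newtonQO} (to be invoked), this forces precisely $k$ of the "coordinate directions" to be visible at level $s$, and the relevant wedge $e_{j_1}\wedge\cdots\wedge e_{j_k}\ne0$ condition in the definition \eqref{j-kQO} of $\mathcal J_k$ selects which monomials of $\bar S$ survive. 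Concretely the $k$ leading coefficients of the visible $x_i\circ h$ give a factor $(\C^*)^k$, and the count of the remaining free coefficients modulo $t^{s+1}$ is $sk$ minus the "cost" of the constraints, which I would show equals exactly $\mathsf{ord}_{\mathcal J_k}(\nu)$ using that $\mathsf{ord}_{\mathcal J_k}$ is the support function of $\mathcal N(\mathcal J_k)$ and that $\langle\nu,\omega\rangle$ for $\omega$ a vertex of that polyhedron records the sum $\sum_{\ell=1}^k\nu(e_{j_\ell})$ of the $k$ smallest relevant values. For $k=0$ the condition $s<\phi_1(\nu)=\mathsf{ord}_{\mathcal J_1}(\nu)$ says every coordinate vanishes mod $t^{s+1}$, so the jet is the single point $0$.

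Third I would check local closedness of $j_s(H^*_\nu)$ inside $H_s(S)$: the locus is cut out by the vanishing/non-vanishing of the initial coefficients prescribed by $\nu$ together with the polynomial relations $\alpha_f^{(r)}=0$ for $r\le s$, so it is the intersection of a closed set (the relations and the vanishing of the lower-order coefficients) with an open set (the non-vanishing of the $k$ leading coefficients in the visible directions), hence locally closed; the induced reduced structure is then seen to be the asserted variety by exhibiting the explicit parametrization of the free coefficients, which is a morphism with a regular inverse. The main obstacle — the point where the q.o.~case genuinely departs from the toric case of \cite{CoGP} — is step one, namely showing that the extra variable $x_{d+1}$ and the infinitely many terms of the $\zeta$-expansion do not introduce new free parameters or new constraints beyond those already accounted for by the lattice $M=M_g$ and the monomial ideals $\mathcal J_k$; this requires checking that the coefficient of $t^r$ in $\zeta(x_1(t),\dots,x_d(t))$, for $r\le s$, is, once the $\langle\nu,\lambda_j\rangle$ values are known, a polynomial in the $\omega_{\bar h}(e_i)$-coefficients that is \emph{already} among the relations defining $H^*_S$ as a subset of $H_{\bar S}^*$ — i.e. it imposes no condition beyond $h=n\circ\bar h$ — which is precisely where one uses that $\mathcal O_S\hookrightarrow\C\{\s^\vee\cap M\}$ is the normalization (Lemma \ref{normal}) and that the $e_{d+j}$ generate $M$ over $M_0$. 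Granting this, the dimension count and the $(\C^*)^k\times\A_\C^{sk-\mathsf{ord}_{\mathcal J_k}(\nu)}$ identification follow as in the toric argument.
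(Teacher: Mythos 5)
Your overall strategy --- parametrize the arcs in $H^*_\nu$ by the arc space of the torus, truncate mod $t^{s+1}$, count the free coefficients and split off a torus factor --- is the same as the paper's, but the step you defer ("granting this, the dimension count follows as in the toric argument") is exactly where the quasi-ordinary case stops behaving like the toric one, and as written your count would fail. You claim that $(\nu,s)\in A_k$ makes \emph{precisely} $k$ coordinate directions visible, that their $k$ leading coefficients give the $(\C^*)^k$, and that $\mathsf{ord}_{\mathcal J_k}(\nu)$ is "the sum of the $k$ smallest relevant values" $\langle\nu,e_{j_\ell}\rangle$. Neither assertion is true in general: the minimizing vertex of $\mathcal{N}(\mathcal J_k)$ is produced by the algorithm of Proposition \ref{PruebaAlgQO}, whose non-greedy alternative $b_{k+1}=w_k-\lambda_{n(k)}+\lambda_{t(k)}+e_{m(k)}$ genuinely occurs (Example \ref{ExamAlg}), so $\mathsf{ord}_{\mathcal J_k}(\nu)$ is not a greedy sum of smallest values; and when $n(k)>0$, $m(k)\neq 0$ there are $k+1$ visible coordinates among $x_1,\dots,x_{d+1}$ (after relabeling, $e_1,\dots,e_{k-1}$, $e_{m(k)}$ and $x_{d+1}$), spanning only the $k$-dimensional space $\ell^s_\nu$ of Lemma \ref{crucial}, with nontrivial relations among their jets. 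The paper's Proposition \ref{pro-aux} shows that the correct free generators are then the jet coefficients of $x_1\circ h,\dots,x_{k-1}\circ h$ and of $x_{m(k)}\circ h$ \emph{together with a window of coefficients} $u_l(x_{d+1})$ of length $\langle\nu,e_{m(k)}-\lambda_{n(k)}\rangle$; only after this exchange (based on the triangular expansion of $u_r(x_{d+1})$ in Proposition \ref{ur}, the truncation control of Proposition \ref{ur2}, which in turn rests on the inequalities of Lemma \ref{ap3Crucial}) does the total come out to $sk-\mathsf{ord}_{\mathcal J_k}(\nu)$, with $\mathsf{ord}_{\mathcal J_k}(\nu)=\langle\nu,e_1+\cdots+e_{k-1}+\lambda_{n(k)}\rangle$ involving $\lambda_{n(k)}$ rather than $e_{m(k)}$.

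This also shows that your description of the "main obstacle" points in the wrong direction: you propose to prove that $x_{d+1}$ and the $\zeta$-expansion "do not introduce new free parameters or new constraints", whereas in the cases $n(k)>0$ the low-order coefficients of $x_{d+1}\circ h$ are indeed constrained (they lie in the algebra generated by the visible $u_l(e_i)$ by Lemma \ref{vectorQO} and Proposition \ref{ur2}), but a specific range of its higher coefficients are genuinely new free parameters, standing in for $u_l(\lambda_{n(k)})$, which is \emph{not} a function of the visible $u_l(e_i)$ with $i\leqslant d$. Likewise the $(\C^*)^k$ factor is $\operatorname{Spec}$ of the algebra $\bar C$ generated by the $c(e_j)$ with $\langle\nu,e_j\rangle\leqslant s$, $1\leqslant j\leqslant d+p$, a rank-$k$ torus that in general involves the $c(\lambda_j)$ and is not just the leading coefficients of $k$ visible coordinates. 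Finally, you never argue why the chosen generators are algebraically independent over $\bar C$ (so that the parametrization is an isomorphism and not merely a surjection); this is Lemma \ref{Lemaprekey}, which again needs the triangular form (\ref{flu2}) and the case analysis on $n(k)$, $m(k)$, together with Proposition \ref{atn2} identifying $\bar B$ with the coordinate ring of $j_s(H^*_\nu)$. The skeleton of your argument is right and the local closedness discussion is fine, but the combinatorial and algebraic bookkeeping you postpone is the actual content of the theorem in the quasi-ordinary case.
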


         Theorem  \ref{keyQO}   is essential to prove the main results of the paper:

\begin{The}  {\rm (cf.~Th. 4.9 \cite{CoGP})}   \label{PLambdaRacQO}
Let $(S,0)$ be an irreducible germ of q.o.~hypersurface. Then
there exists a polynomial $Q_S \in\Z[\L,T]$ determined by the
lattice $M$ and the Newton polyhedra of the logarithmic jacobian
ideals of $(S,0)$ such that
\begin{equation} \label{formaQO}
P(S) =  Q_S \prod_{(a, b) \in B(S)} (1-\L^{a}T^{b})^{-1}.
\end{equation}
\end{The}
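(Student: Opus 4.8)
The plan is to express $P(S)$ as a sum over the lattice points $\nu \in \stackrel{\circ}{\s}\cap N$ and the truncation level $s$ of the classes $[j_s(H^*_\nu)]$, exactly as in \cite{CoGP}. First I would establish that the coefficient of $T^s$ in $P(S)$ equals $\sum_{\nu} [j_s(H^*_\nu)]$, where $\nu$ runs over the finite set of those $\nu$ for which $j_s(H^*_\nu)$ is nonempty and is not a jet of an arc factoring through a proper coordinate section; the orbit decomposition $H_S^* = \bigsqcup_{\nu} H^*_{S,\nu}$ from Section \ref{ArcsAndJetsQO} together with the fact that the $j_s(H^*_\nu)$ are locally closed and pairwise disjoint or equal (a point that must be settled in combinatorial terms, as announced in the Introduction) reduces the computation to a disjoint sum in the Grothendieck ring. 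For a fixed $s$, only finitely many $\nu$ contribute because $\mathsf{ord}_{\mathcal J_k}(\nu)$ grows linearly in $\nu$ while $s$ is fixed, so the defining inequalities of the $A_k$ (Definition \ref{Ak}) cut out a bounded region.

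Next I would substitute the explicit formula from Theorem \ref{keyQO}: on $A_k$ with $1\leqslant k\leqslant d$ we have $[j_s(H^*_\nu)] = (\L-1)^k \L^{\,sk-\mathsf{ord}_{\mathcal J_k}(\nu)}$, and for $k=0$ the contribution is $1$ (the zero jet). Hence
\[
P(S) = \frac{1}{1-T} + \sum_{k=1}^d (\L-1)^k \sum_{(\nu,s)\in A_k} \L^{\,sk - \mathsf{ord}_{\mathcal J_k}(\nu)}\, T^s .
\]
The inner sum over $A_k$ is a sum over lattice points of $\stackrel{\circ}{\s}\cap N$ subject to the piece-wise linear constraints $\f_k(\nu)\leqslant s < \f_{k+1}(\nu)$, with a summand that is $\L$ raised to a piece-wise linear function of $(\nu,s)$ times $T^s$. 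The decisive step is then to evaluate such lattice-point generating functions. I would subdivide $\s$ into the cones of the common refinement of the fans $\Sigma_1,\dots,\Sigma_d$ (on which all the functions $\mathsf{ord}_{\mathcal J_i}$, hence $\f_k$ and $\Psi_k$, are linear), triangulate each cone into simplicial cones, and on each simplicial cone use the standard formula for the generating function of lattice points in a simplicial cone as a rational function whose denominator is a product of factors $1-\L^{a}T^{b}$, one for each ray generator $\nu_\r$, with exponents read off from the linear functions; the numerator is a polynomial in $\L,T$ accounting for the "box" of the (possibly non-unimodular) cone and for the half-open inequalities. This is precisely the combinatorial machinery of \cite{CoGP}, and it produces denominators of the form $1-\L^{\f_k(\nu_\r)}T^{\,\text{(something)}}$; matching these against the definition of $B(S)$ in \eqref{BLQO} — where the pair attached to a ray $\r\in\cup_{i\le k}\Sigma_i^{(1)}$ is $(\Psi_k(\nu_\r),\f_k(\nu_\r))$ — shows that all poles that occur lie in $B(S)$, so after clearing denominators we obtain a genuine polynomial $Q_S\in\Z[\L,T]$, and the constant term $1/(1-T)$ contributes the factor indexed by $(d,1)\in B(S)$ (or is absorbed after common denominator).

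The main obstacle, compared to the toric case of \cite{CoGP}, is twofold. First, one must justify carefully that $j_s(H^*_\nu)$ is locally closed in $H_s(S)$ and that distinct $\nu$ give disjoint or equal jets, and characterize the equality combinatorially; this is where the non-normality of $(S,0)$ and the passage through the normalization $(\bar S,0) = Z^{\s^\vee\cap M}$ intervenes, and it uses the bijection $H^*_S \leftrightarrow H^*_{\bar S}$ together with the action of the arc space of the torus. Second, the exponents in Theorem \ref{keyQO} involve $\mathsf{ord}_{\mathcal J_k}(\nu)$, and one needs the convexity/linearity properties of the Newton polyhedra of the logarithmic jacobian ideals (announced for Section \ref{conv-newtonQO}) to guarantee that, on each cone of the refinement, $k\cdot s - \mathsf{ord}_{\mathcal J_k}(\nu)$ is linear and that the resulting geometric-type sums converge to rational functions with the prescribed denominators; checking that these denominators are exactly the elements of $B(S)$, and that the bookkeeping of the half-open conditions $s<\f_{k+1}(\nu)$ (which glue the strata $A_k$ for consecutive $k$) does not introduce spurious poles, is the delicate combinatorial heart of the argument. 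Finally I would note, as in the cited proposition, that the whole computation depends only on the lattice $M$ (which fixes $N$ and the indices $n_j$) and the Newton polyhedra of $\mathcal J_1,\dots,\mathcal J_d$, which is the asserted invariance of $Q_S$.
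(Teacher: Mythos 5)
Your overall architecture is the paper's (stratify the jets by the vectors $\nu$, plug in Theorem \ref{keyQO}, then sum generating functions over the common refinement of the $\Sigma_i$), but the master formula you display for $P(S)$ is wrong, and the error sits exactly where the real content of Section \ref{GenptOrbitsQO} lies. First, for $k<d$ and fixed $s$ the set of $\nu$ with $(\nu,s)\in A_k$ is infinite: the constraint $\f_k(\nu)\leqslant s$ does not cut out a bounded region (already $\f_1(\nu)=\ord_{\J_1}(\nu)$ only bounds the smallest of the quantities $\langle\nu,e_i\rangle$, so vectors $\nu$ with one small coordinate and the others arbitrarily large satisfy $\f_1(\nu)\leqslant s<\f_2(\nu)$). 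Hence $\sum_{(\nu,s)\in A_k}\L^{sk-\ord_{\J_k}(\nu)}T^s$ is not even well defined coefficientwise, and your finiteness justification ("the inequalities of $A_k$ cut out a bounded region") is false. What is finite is the set of equivalence classes for the relation (\ref{equivalence-bis}) (Remark \ref{remkQO}), and by Proposition \ref{Prop-equivQO} equivalent pairs $(\nu,s)\sim(\nu',s)$ give the \emph{same} set $j_s(H^*_\nu)=j_s(H^*_{\nu'})$; so the coefficient of $T^s$ must be a sum with one term per class, as in Propositions \ref{29QO} and \ref{mk} (summing over all $\nu$ would in any case count equal jets with multiplicity). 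Second, your displayed sum drops the exclusion of jets of arcs through proper coordinate sections: by Propositions \ref{thetapropositionQO} and \ref{newQO} this exclusion is precisely the condition that the face $\mathcal{F}_\nu$ of $\mathcal{N}(\J_k)$ lies in the interior of $\s^\vee$, i.e.\ the restriction to cones $\t\in\mathcal{D}_k$ in Proposition \ref{mk}. Third, the term $1/(1-T)$ does not belong to $P(S)$: the zero $s$-jet lies in $j_s(H_{S_\theta})$ for every $\theta\neq 0$, so the $A_0$ strata are removed by the very definition (\ref{auxiliarPQO}); the factor $(1-T)^{-1}$ is $P(S_\s)$, a separate summand of $P_{\geom}^{(S,0)}$, and the pair $(d,1)\in B(S)$ corresponds to $1-\L^dT$, produced by the $k=d$ strata (Proposition \ref{ratP_jQO}), not to $1-T$.

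These are not bookkeeping slips, because once the sum is taken over classes rather than over lattice points the rationality step changes too. One does not compute the lattice-point generating function of $\t(k)\cap\hat{N}$ (a simplicial triangulation of those cones would produce denominators attached to rays of the triangulation, which need not belong to $B(S)$), but the generating function of $A_{k,\t}/_{\sim}$, using that inside a cone $\t\in\cap_{i=1}^k\Sigma_i$ a class is determined by the values $\f_1(\nu),\dots,\f_k(\nu)$ together with $s$ (Remark \ref{86QO}); it is this projection that forces the denominators to involve only the pairs $(\Psi_k(\nu_\r),\f_k(\nu_\r))$ and $(\Psi_{k+1}(\nu_\r),\f_{k+1}(\nu_\r))$ at rays of $\t$ and of $\Sigma_{k+1}$ inside $\t$, together with $1-\L^dT$ when $k=d$, i.e.\ exactly the set $B(S)$ (Proposition \ref{ratP_jQO}). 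So to repair your argument you must supply the material you deferred: the disjoint-or-equal dichotomy and its combinatorial characterization (Proposition \ref{Prop-equivQO}), the criterion for avoiding the sections (Propositions \ref{thetapropositionQO} and \ref{newQO}), the resulting decomposition $P(S)=\sum_{k=1}^d\sum_{\t\in\mathcal{D}_k}P_{k,\t}(S)$ with $P_{k,\t}$ summed over classes (Propositions \ref{29QO} and \ref{mk}), and only then the rationality of each $P_{k,\t}$ with the prescribed denominators; the theorem follows from these, which is precisely how the paper concludes.
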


Combining Theorem \ref{PLambdaRacQO} with Proposition
\ref{descompPgeomQO} we deduce the following Corollary:

\begin{Cor}  {\rm (cf.~Cor. 4.10 \cite{CoGP})}  \label{P-geomQO}
We have that
\[
P_{\geom}^{(S, 0)}(T) = \sum_{\theta \leqslant \s}
Q_{S_\theta} \prod_{ (a, b) \in B( S_\theta) }
(1-\L^{a}T^{b})^{-1}.
\]
The series $P_{\geom}^{(S, 0)}(T)$ depends only on the logarithmic jacobian ideals and lattices $M
(\theta, \z)$ associated to the q.o.~sections
$S_\theta$ for $0 \leqslant \theta \leqslant \s$ (with respect to
the q.o.~projection $\p$).
\end{Cor}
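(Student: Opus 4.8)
The plan is to derive Corollary \ref{P-geomQO} by combining the decomposition of Proposition \ref{descompPgeomQO} with the rationality result of Theorem \ref{PLambdaRacQO}, applied not only to $(S,0)$ itself but to each quasi-ordinary coordinate section $(S_\theta,0)$ for $0\leqslant\theta\leqslant\s$. The only genuine content to supply beyond assembling these two statements is the verification that Theorem \ref{PLambdaRacQO} does apply verbatim to each $S_\theta$ and that the data produced ($Q_{S_\theta}$ and the finite sets $B(S_\theta)$) are exactly the ones built from the logarithmic jacobian ideals and lattices attached to $S_\theta$ via the restricted q.o.~projection $\p_\theta$ of \eqref{tor-sec}.

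First I would recall that, by Proposition \ref{descompPgeomQO}, $P_{\geom}^{(S,0)}(T)=\sum_{\theta\leqslant\s}P(S_\theta)$, where the sum is over all faces $\theta$ of $\s$. For each such $\theta$ the germ $(S_\theta,0)$ is itself an irreducible germ of q.o.~hypersurface (of dimension $\dim\theta^\perp$), parametrized by the normalized q.o.~branch $\z_\theta=\sum_{\l\in\s^\vee\cap\theta^\perp}\beta_\l X^\l$, with associated lattices $M_0(\theta)\hookrightarrow M(\theta,\z)\hookrightarrow M(\theta)$ and q.o.~projection $\p_\theta$. Hence Theorem \ref{PLambdaRacQO} applies to $(S_\theta,0)$ and yields a polynomial $Q_{S_\theta}\in\Z[\L,T]$, determined by $M(\theta,\z)$ and the Newton polyhedra of the logarithmic jacobian ideals of $(S_\theta,0)$ relative to $\p_\theta$, such that
\[
P(S_\theta)=Q_{S_\theta}\prod_{(a,b)\in B(S_\theta)}(1-\L^a T^b)^{-1}.
\]
For the two degenerate faces this is consistent with Remark \ref{P-codim1QO}: when $\theta=\s$ one has $P(S_\s)=(1-T)^{-1}$ with $B(S_\s)=\{(0,1)\}$ and $Q_{S_\s}=1$, and when $\theta$ has codimension one in $\s$, $S_\theta$ is a plane curve and $P(S_\theta)=\frac{\L-1}{1-\L T}\,\frac{T^m}{1-T^m}$ with $m$ the multiplicity of $S_\theta$, which is again of the stated form since in that case $B(S_\theta)$ reduces to $\{(1,1),(0,m)\}$. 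Substituting these expressions into the decomposition of Proposition \ref{descompPgeomQO} gives precisely the displayed formula
\[
P_{\geom}^{(S,0)}(T)=\sum_{\theta\leqslant\s}Q_{S_\theta}\prod_{(a,b)\in B(S_\theta)}(1-\L^a T^b)^{-1}.
\]

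Finally, for the dependency statement I would argue as follows. The right-hand side is built only out of the polynomials $Q_{S_\theta}$ and the sets $B(S_\theta)$; by Theorem \ref{PLambdaRacQO} the former depend only on $M(\theta,\z)$ and the Newton polyhedra $\mathcal N(\mathcal J_k^\theta)$ of the logarithmic jacobian ideals of $S_\theta$, and by Definition \ref{jotak-bisQO} the latter are determined by the piece-wise linear functions $\Psi_k,\f_k$ and the one-skeleta of the dual subdivisions $\Sigma_k^\theta$, hence again by the same Newton polyhedra and lattices. Therefore the entire expression, and so $P_{\geom}^{(S,0)}(T)$, depends only on the logarithmic jacobian ideals and the lattices $M(\theta,\z)$ associated to the q.o.~sections $S_\theta$ for $0\leqslant\theta\leqslant\s$ (relative to $\p$). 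I expect the only slightly delicate point to be bookkeeping: checking that the logarithmic jacobian ideals of $S_\theta$ relative to $\p_\theta$ are genuinely intrinsic objects obtained by the same recipe \eqref{j-kQO} applied to the characteristic data of $\z_\theta$, so that the sum over $\theta$ is expressed purely in the asserted combinatorial terms; this is a direct consequence of the definitions in Section \ref{secQO} and Definition \ref{idealesJac} and requires no new computation.
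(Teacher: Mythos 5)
Your proposal is correct and follows essentially the same route as the paper: the paper's own proof of Corollary \ref{P-geomQO} consists precisely of invoking Proposition \ref{descompPgeomQO} together with Theorem \ref{PLambdaRacQO} applied to each q.o.~section $(S_\theta,0)$, which is what you do (your extra consistency checks for $\theta=\s$ and the codimension-one faces agree with Remark \ref{P-codim1QO} and the conventions in Definition \ref{jotak-bisQO}). The dependency statement is likewise obtained exactly as you argue, since $Q_{S_\theta}$ and $B(S_\theta)$ are built from the Newton polyhedra of the logarithmic jacobian ideals and the lattices $M(\theta,\z)$.
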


As an application we  give a formula for the   {\em motivic
volume} of the arc space of a q.o.~hypersurface. We refer to
\cite{DL1, DL-M, Loo} for the  definition
of {\em measurable sets} and properties of the {\em motivic
volume}.

Let $\t$ be a strictly convex cone rational for the lattice $N$.
The {\em generating series}  $F_{\stackrel\circ\t\cap
N}(x):=\sum_{\nu\in\stackrel\circ\t\cap N}x^\nu$ has  a rational
form \[  F_{\stackrel\circ\t\cap N}(x)=R_{\t \cap N}
\prod_{\r\leqslant\t,\ \dim\r=1}(1-x^{\nu_\r})^{-1} \mbox{ with }
R_{\t \cap N} \in \Z[ \t \cap N] \]  (see \cite{CoGP} for
instance).
 If $\t \in \Sigma_d$  we denote by $\eta_\t :\Z[\t\cap N]\rightarrow\Z[\L^{\pm 1}]$ the toric map
given by $\eta_\t (x^\nu)=\L^{-\ord_{\J_d}(\nu)}$.

\begin{Cor}   {\rm (cf.~Prop. 10.1 \cite{CoGP})}
The motivic volume of the arc space $H_S$ of the q.o.~
hypersurface $(S, 0)$  is equal to
\[\mu(H_S)=(\L-1)^d  \sum_{{\t\in\Sigma_d}}^{\stackrel\circ\t\cap\stackrel\circ\s\neq\emptyset}
\eta_\t (R_{\stackrel\circ\t\cap N})  \prod_{\r\leqslant\t,\
\dim\r=1}(1-\L^{-\ord_{\J_d}(\nu_\r)})^{-1}       .\]
\label{vol-mot2QO}
\end{Cor}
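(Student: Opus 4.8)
The plan is to obtain the motivic volume as a limit of the normalized truncations of the geometric Poincar\'e series, using the standard fact (see \cite{DL1, DL-M, Loo}) that
\[
\mu(H_S) = \lim_{m \to \infty} \L^{-(m+1)d} [j_m(H_S)],
\]
the limit being taken in the completion of the localized Grothendieck ring with respect to the dimension filtration. The first step is to discard the contribution of the jets that factor through a proper q.o.~coordinate section: since each $(S_\theta, 0)$ with $0 \ne \theta \leqslant \s$ has dimension $\dim \theta^\perp < d$, the class $[j_m(H_{S_\theta})]$ has dimension bounded by $(m+1)\dim\theta^\perp + O(1)$, hence $\L^{-(m+1)d}[j_m(H_{S_\theta})] \to 0$. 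Therefore only the ``main stratum'' survives, and by Proposition \ref{descompPgeomQO} together with the definition \eqref{auxiliarPQO} of $P(S)$ we get
\[
\mu(H_S) = \lim_{m\to\infty} \L^{-(m+1)d} \, c_m,
\]
where $c_m \in K_0(\Var_\C)[\L^{-1}]$ is the coefficient of $T^m$ in $P(S)$.

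Next I would use the stratification $H^*_S = \bigsqcup_{\nu \in \stackrel{\circ}{\s}\cap N} H^*_{S,\nu}$ to expand $c_m$ as a finite sum $c_m = \sum_{\nu} [j_m(H^*_\nu)]$, the sum running over those $\nu$ for which $j_m(H^*_\nu)$ contributes, i.e.~those $(\nu, m)$ lying in some $A_k$; this is exactly the computation carried out in the proof of Theorem \ref{PLambdaRacQO}. For $(\nu, m) \in A_d$ (which is the only range whose dimension grows like $(m+1)d$; the strata with $k < d$ have lower-order dimension and wash out in the limit, just as the coordinate sections did), Theorem \ref{keyQO} gives
\[
[j_m(H^*_\nu)] = (\L-1)^d \, \L^{\,md - \ord_{\J_d}(\nu)}.
\]
Summing over $\nu$ with $(\nu,m)\in A_d$, i.e.~over $\nu\in\stackrel{\circ}{\s}\cap N$ with $\f_d(\nu) \leqslant m$ (equivalently $\ord_{\J_d}(\nu) = \ord_{\J_{d-1}}(\nu) + \f_d(\nu)$ and one checks $\f_d(\nu)\le m < \f_{d+1}(\nu)=\infty$ is no constraint for large $m$), and normalizing by $\L^{-(m+1)d}$, the powers of $\L$ depending on $m$ cancel in the limit and one is left with
\[
\mu(H_S) = (\L-1)^d \!\!\sum_{\nu \in \stackrel{\circ}{\s}\cap N} \!\! \L^{-\ord_{\J_d}(\nu)}.
\]

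The final step is to evaluate this lattice sum cone by cone against the dual subdivision $\Sigma_d$ of $\s$ attached to the Newton polyhedron of $\J_d$. On the relative interior of each cone $\t \in \Sigma_d$ with $\stackrel{\circ}{\t}\cap\stackrel{\circ}{\s}\ne\emptyset$ the support function $\ord_{\J_d}$ is linear, so applying the ring homomorphism $\eta_\t(x^\nu) = \L^{-\ord_{\J_d}(\nu)}$ to the generating function identity $F_{\stackrel{\circ}{\t}\cap N}(x) = R_{\t\cap N}\prod_{\r\leqslant\t,\ \dim\r=1}(1-x^{\nu_\r})^{-1}$ turns $\sum_{\nu\in\stackrel{\circ}{\t}\cap N}\L^{-\ord_{\J_d}(\nu)}$ into $\eta_\t(R_{\stackrel{\circ}{\t}\cap N})\prod_{\r\leqslant\t,\ \dim\r=1}(1-\L^{-\ord_{\J_d}(\nu_\r)})^{-1}$; one must check that $\eta_\t$ is well defined, i.e.~that $\ord_{\J_d}(\nu_\r)>0$ for each ray $\r$, which holds because the compact-face cones have rays in $\stackrel{\circ}{\s}$ where $\ord_{\J_d}$ is strictly positive, and for rays on the boundary of $\s$ the corresponding section already contributes nothing. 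Adding up over $\t\in\Sigma_d$ (the interiors of the $\t$ with $\stackrel{\circ}{\t}\cap\stackrel{\circ}{\s}\ne\emptyset$ partition $\stackrel{\circ}{\s}$ up to lower-dimensional cones, whose contribution is absorbed) yields the asserted formula. The main obstacle is the bookkeeping in the passage to the limit: one has to justify rigorously that all the $k<d$ strata, as well as the lower-dimensional cones in $\Sigma_d$ and all the coordinate sections, contribute $0$ to $\mu(H_S)$, which requires uniform dimension estimates for the classes $[j_m(H^*_\nu)]$ coming from Theorem \ref{keyQO} and a careful check that the rational form in Theorem \ref{PLambdaRacQO} has the expected leading behaviour; this is the analogue of Proposition 10.1 of \cite{CoGP} and the argument there adapts once Theorem \ref{keyQO} is in hand.
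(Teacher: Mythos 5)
Your proposal follows essentially the same route as the paper, whose proof of Corollary \ref{vol-mot2QO} consists precisely in invoking the argument of Proposition 10.1 of \cite{CoGP}: express $\mu(H_S)$ as a limit of normalized jet classes, discard the coordinate sections and the strata with $(\nu,m)\in A_k$, $k<d$, because their virtual dimension tends to $-\infty$ in the completed Grothendieck ring, apply Theorem \ref{keyQO} to the $A_d$-strata, and evaluate the resulting lattice sum cone by cone over $\Sigma_d$ by means of the generating functions $F_{\stackrel{\circ}{\t}\cap N}$ and the homomorphisms $\eta_\t$. One bookkeeping point should be fixed: with the normalization $\L^{-(m+1)d}[j_m(H_S)]$ that you quote, the $m$-dependent powers cancel only up to a residual constant, since $\L^{-(m+1)d}(\L-1)^d\L^{md-\ord_{\J_d}(\nu)}=(\L-1)^d\L^{-d-\ord_{\J_d}(\nu)}$, so either the limit must be taken with the normalization $\L^{-md}$ (the natural one for jets of arcs centred at $0$, under which the displayed formula comes out exactly), or else the formula you derive carries an extra factor $\L^{-d}$; apart from this constant the argument is the intended one.
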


\begin{remark} In Section \ref{QOexample} we give an example of q.o.
surface singularity such that all the  candidate poles in
Corollary \ref{P-geomQO} are actual poles.
\end{remark}

We give a geometrical interpretation of the set of candidate poles
of the series $P_{\geom}^{(S,0)} (T)$.

\begin{definition}
 For $1 \leqslant k \leqslant d$ we denote by $\p_{k}$ the composite of the
 normalization map $\bar S=Z^{\s^\vee\cap M} \rightarrow S$ with the toric
 modification of $Z_k \rightarrow Z^{\s^\vee\cap M}$ defined by the subdivision $\cap_{i=1}^k
 \Sigma_i$ of $\s$.
 \end{definition}

 The modification $\p_k$ is the minimal modification with normal
 source, which factors through the normalization of $S$
 and the normalized blowing up of $\bar{S}$  with center $\J_i$, for $i=1, \dots, k$.
 The rays $\r$ in the fan $\cap_{i=1}^k \Sigma_i$ correspond bijectively to
orbit closures of $Z_k$ which are of codimension one. If $\nu_\r$
is the generator of the semigroup $\r \cap N$ we denote by
$E_{\nu_\r}$ the irreducible component corresponding to $\r$. We
denote by $\mbox{val}_{{\nu_\r}}$ the divisorial valuation of the
field of fractions of $\C \{ \s^\vee\cap M \} $, which is
associated to the divisor $E_{\nu_\r}$. If $m \in M$ then we have
that
\begin{equation} \label{val1}
\mbox{val}_{\nu_\r} (X^m) = \langle \nu_\r , m \rangle.
\end{equation}
We have that $\stackrel{\circ}{\r} \subset \stackrel{\circ}{\s}$
if and only if $E_{\nu_\r}$ is a codimension one irreducible
component of the exceptional fiber of $\p_k ^{-1} (0) $. If $ 1
\leqslant i \leqslant k \leqslant d$ the pull-back $\p_k^* (\J_i)$
of $\J_i$ by $\p_k$ is a locally principal monomial
ideal sheaf on the toric variety $Z_k$ and by (\ref{val1}) we have that
\[
 \mbox{val}_{\nu_\r} (\p_k^* (\J_i)) = \ord_{\J_i} (\nu_\r).
\]
\begin{Pro} \label{val3}
For $1 \leqslant k \leqslant d$ we have that \[ \mathcal{L}_k:=
(\p_k^* (\J_k))^{k-1} / (\p_k^* (\J_{k-1}))^{k} \mbox{ and }
\mathcal{Q}_k := \p_k^* (\J_k)  / \p_k^* (\J_{k-1})\] define
 locally principal monomial ideal sheaves on $Z_k$ such that
\[
\begin{array}{lcl}
B(S)  & = &  \{ (d,1) \} \cup \bigcup_{k=1}^{d-1} \{
  ( \mbox{val}_{\nu_\r} ( \mathcal{L}_k ) , \mbox{val}_{\nu_\r}
  (\mathcal{Q}_k) ) \mid E_{\nu_\r} \subset \p_k ^{-1} (0) \}
  \\
 &  &  \cup   \{ ( \mbox{val}_{\nu_\r} ( \mathcal{L}_d ) , \mbox{val}_{\nu_\r}
  (\mathcal{Q}_d) ) \mid  \r \in \cap_{i=1}^d
  \Sigma_i, \dim \r = 1 \}.
\end{array}
\]
\end{Pro}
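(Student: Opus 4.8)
The plan is to reduce everything to the already-proven dictionary between divisorial valuations on the toric variety $Z_k$ and the rays of the fan $\cap_{i=1}^k \Sigma_i$, together with the formula $\mathrm{val}_{\nu_\r}(\p_k^*(\J_i)) = \mathrm{ord}_{\J_i}(\nu_\r)$ recorded just before the statement. First I would check that $\mathcal{L}_k$ and $\mathcal{Q}_k$ are locally principal monomial ideal sheaves on $Z_k$. Since $\Sigma_{k-1}$ and $\Sigma_k$ both refine to $\cap_{i=1}^k \Sigma_i$, the pull-backs $\p_k^*(\J_{k-1})$ and $\p_k^*(\J_k)$ are locally principal monomial ideal sheaves on $Z_k$ (this is the standard fact that pulling a monomial ideal back to its normalized blow-up, or any finer toric modification, makes it invertible). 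Hence the quotients $\mathcal{Q}_k = \p_k^*(\J_k)/\p_k^*(\J_{k-1})$ and $\mathcal{L}_k = (\p_k^*(\J_k))^{k-1}/(\p_k^*(\J_{k-1}))^{k}$ are again locally principal monomial sheaves — locally generated by the monomial whose exponent is the difference of the corresponding support functions. Concretely, on the chart corresponding to a maximal cone $\t$ of $\cap_{i=1}^k \Sigma_i$, one has $\mathrm{val}_{\nu_\r}(\mathcal{Q}_k) = \mathrm{ord}_{\J_k}(\nu_\r) - \mathrm{ord}_{\J_{k-1}}(\nu_\r)$ and $\mathrm{val}_{\nu_\r}(\mathcal{L}_k) = (k-1)\,\mathrm{ord}_{\J_k}(\nu_\r) - k\,\mathrm{ord}_{\J_{k-1}}(\nu_\r)$ for every ray $\r$ of that cone.

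Comparing with Definition \ref{jotak-bisQO}, the right-hand sides above are exactly $\f_k(\nu_\r)$ and $\Psi_k(\nu_\r)$: indeed $\f_k = \mathrm{ord}_{\J_k} - \mathrm{ord}_{\J_{k-1}}$ and $\Psi_k = (k-1)\,\mathrm{ord}_{\J_k} - k\,\mathrm{ord}_{\J_{k-1}}$ for $k \geqslant 2$, while for $k=1$ one has $\f_1 = \mathrm{ord}_{\J_1}$, $\Psi_1 = 0$, consistent with the convention $\J_0 = \C\{\s^\vee\cap M\}$ so that $\mathrm{ord}_{\J_0} \equiv 0$. Thus
\[
\big(\mathrm{val}_{\nu_\r}(\mathcal{L}_k),\, \mathrm{val}_{\nu_\r}(\mathcal{Q}_k)\big) = \big(\Psi_k(\nu_\r),\, \f_k(\nu_\r)\big)
\]
for every ray $\r$ of $\cap_{i=1}^k \Sigma_i$. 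It remains to match the ranges of $\r$. For $1 \leqslant k \leqslant d-1$ the condition $E_{\nu_\r}\subset \p_k^{-1}(0)$ is, by the paragraph preceding the statement, equivalent to $\stackrel{\circ}{\r}\subset\stackrel{\circ}{\s}$, i.e. $\r\cap\stackrel{\circ}{\s}\neq\emptyset$; and a ray of $\cap_{i=1}^k\Sigma_i$ meeting the interior of $\s$ is necessarily a ray of some $\Sigma_i$ with $i\leqslant k$ (the refinement introduces no new rays in the interior beyond those already coming from the $\Sigma_i$'s defining the polyhedra), so the index set $\{\r \in \cap_{i=1}^k\Sigma_i^{(1)} \mid \stackrel{\circ}{\r}\subset\stackrel{\circ}{\s}\}$ coincides with $\{\r\in\cup_{i=1}^k\Sigma_i^{(1)}\mid \r\cap\stackrel{\circ}{\s}\neq\emptyset\}$ appearing in (\ref{BLQO}). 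For $k=d$ the statement lists all rays of $\cap_{i=1}^d\Sigma_i$ without the interior restriction, matching the $k=d$ term of $B(S)$ where the condition ``$\r\cap\stackrel{\circ}{\s}\neq\emptyset$ if $k<d$'' is vacuous. Adding the isolated pair $(d,1)$, which is common to both descriptions, gives the asserted equality of the two finite sets.

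The one point requiring care — and the main obstacle — is the bookkeeping in the previous paragraph: showing that refining to $\cap_{i=1}^k\Sigma_i$ creates no spurious rays in $\stackrel{\circ}{\s}$ that would enlarge the index set, and conversely that every ray $\r\in\Sigma_i^{(1)}$ with $i\leqslant k$ and $\r\cap\stackrel{\circ}{\s}\neq\emptyset$ does survive as a ray of the common refinement. The first is automatic since the rays of an intersection of fans are among the rays of the factors; for the second one uses that $\r$, being one-dimensional, is contained in a single cone of each $\Sigma_i$, hence lies in $\cap_{i=1}^k\Sigma_i$. Once this combinatorial identification of index sets is in place, the proposition follows by the computation above; I would also remark that $(d,1) = (\Psi_d(\nu_\r),\f_d(\nu_\r))$ can fail in general, which is why it is adjoined separately, exactly as in Definition \ref{jotak-bisQO} and in the analogous Proposition of \cite{CoGP}.
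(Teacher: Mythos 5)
Your computation of the valuations is the right one and matches the dictionary set up just before the statement: since $\cap_{i=1}^k\Sigma_i$ refines the dual fans of $\mathcal{N}(\J_{k-1})$ and $\mathcal{N}(\J_k)$, the pull-backs $\p_k^*(\J_{k-1})$, $\p_k^*(\J_k)$ are locally principal and $\left( \mbox{val}_{\nu_\r}(\mathcal{L}_k), \mbox{val}_{\nu_\r}(\mathcal{Q}_k) \right) = \left( \Psi_k(\nu_\r), \f_k(\nu_\r) \right)$ for every ray $\r$ of $\cap_{i=1}^k\Sigma_i$; to see that $\mathcal{L}_k$ and $\mathcal{Q}_k$ are genuine (effective) ideal sheaves you should also record that $\f_k\geqslant 0$ and $\Psi_k=\sum_{i=1}^{k-1}(\f_k-\f_i)\geqslant 0$ on $\s$, which follows from Lemma \ref{c1}. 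Your argument that every ray of some $\Sigma_i$, $i\leqslant k$, is a cone (hence a ray) of $\cap_{i=1}^k\Sigma_i$ is also correct, so the inclusion of $B(S)$ into the right-hand side is fine.

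The gap is the reverse inclusion, which you declare ``automatic since the rays of an intersection of fans are among the rays of the factors''. That general statement is false as soon as $d\geqslant 3$: a common refinement can acquire rays lying in the relative interiors of walls of the factors. For example, in $\s=\R^3_{\geqslant 0}$ intersect the dual fans of the monomial ideals $(x_1,x_2)$ and $(x_2,x_3)$, whose walls are $\{\nu_1=\nu_2\}\cap\s$ and $\{\nu_2=\nu_3\}\cap\s$; their intersection is the ray through $(1,1,1)$, which is a cone of the intersection fan but a ray of neither factor. Consequently the index set $\{\r\in(\cap_{i=1}^k\Sigma_i)^{(1)}\ \mid\ \stackrel{\circ}{\r}\subset\stackrel{\circ}{\s}\}$ (resp.\ all rays of $\cap_{i=1}^d\Sigma_i$ when $k=d$) is a priori strictly larger than the set $\{\r\in\cup_{i=1}^k\Sigma_i^{(1)}\ \mid\ \r\cap\stackrel{\circ}{\s}\neq\emptyset\}$ occurring in (\ref{BLQO}), and the pair $(\Psi_k(\nu_\r),\f_k(\nu_\r))$ evaluated at such an extra ray has no reason to reproduce a pair already present in $B(S)$. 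This is precisely the nontrivial half of the asserted equality of finite sets, and it cannot be disposed of by a general fact about fans: one must use the specific structure of the dual fans of the logarithmic jacobian ideals (for instance the combinatorics of Proposition \ref{PruebaAlgQO} and Section \ref{conv-newtonQO}, which control the domains of linearity of $\ord_{\J_1},\dots,\ord_{\J_k}$) to rule out such spurious rays, or to show that their value pairs are redundant. Your identification of the two index sets is only automatic for $d=2$, where rays of a common refinement of subdivisions of a two-dimensional cone are indeed rays of the factors; as written, the higher-dimensional case is unproved.
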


\section{Combinatorial convexity properties of the logarithmic jacobian ideals}
 \label{conv-newtonQO}

In this section we give a series of results on the properties of
the support functions of the logarithmic jacobian ideals
associated to a q.o.~hypersurface germ $(S,0)$, which are used in Sections \ref{tor-jetQO} and \ref{GenptOrbitsQO}.

\begin{Not}
If $\nu\in\s\cap N$ we denote by $\leqslant_\nu$ the partial order
on $M$ defined by $\l_{\leqslant_\nu}\l'$ if
$\langle\nu,\l\rangle\leqslant\langle\nu,\l'\rangle$.
\label{notationorder}
\end{Not}

\begin{remark}
The logarithmic jacobian ideals $\bar{\J}_1, \dots, \bar{\J}_d$ of
the normalization $(\bar{S},0)$, which are studied in \cite{CoGP},
are different than those of $(S,0)$ in general. Recall that we
have $\bar{S} = Z^{\s^\vee \cap M}$.  If $v_1, \dots, v_n$ are the
minimal sequence of generators of the semigroup $\s^\vee \cap M$
then  we have that $\bar{\J}_k = (X^{v_{j_1} + \cdots + v_{j_k} }
)_{ v_{j_1} \y \cdots \y v_{j_k} \ne 0}$. The combinatorial
convexity properties of the support functions, $ \ord_{\bar{\J}_k}
$ for $k =1, \dots,d$,   are simpler in the toric case. Given $\nu
\in \stackrel{\circ}{\s} \cap N$, up to relabeling, we can assume
for simplicity that $v_1 \leqslant_{\nu} \cdots \leqslant_{\nu}
v_n$. We define inductively $i_1 :=1$ and $i_k := \min \{ i \mid
v_{i_1} \y \cdots \y   v_{i_{k-1}} \y v_i \ne 0 \}$ for $k =2,
\dots, d$. Then we have that $ \ord_{\bar{\J}_k}(\nu) :=
\sum_{l=1}^k \langle \nu, v_{i_l} \rangle$ and $
\ord_{\bar{\J}_{k+1}}(\nu) - \ord_{\bar{\J}_k}(\nu) = \langle\nu,
v_{i_l} \rangle$.
\end{remark}

 The following example shows that the
q.o.~case is not as simple as the toric case.
\begin{Exam} \label{ExamAlg}
Consider a q.o.~branch with characteristic
exponents $\l_1=(1/2,1/2,0)$ and $\l_2=(1/2,1/2,1/4)$.
The points $\nu_1=(4,2,8)$ and $\nu_2=(4,2,4)$
belong to the lattice $N$. It is easy to check that
$\ord_{\J_2}(\nu_1)=\langle\nu_1,e_2+\l_1\rangle$,
$\ord_{\J_2}(\nu_2)=\langle\nu_2,e_2+\l_1\rangle$,
$\ord_{\J_3}(\nu_1)=\langle\nu_1,e_1+e_2+\l_2\rangle$ and
$\ord_{\J_3}(\nu_2)=\langle\nu_2,e_2+\l_1+e_3\rangle$. Then we
get $ (\ord_{\J_3} - \ord_{\J_2})
(\nu_1)=\langle\nu_1,\l_2-\l_1+e_3\rangle$  while $ (\ord_{\J_3} -
\ord_{\J_2})(\nu_2)=\langle\nu_2,e_3\rangle$.
\end{Exam}

\begin{Not}
We fix a partial order on the set $\{e_1,\ldots,e_{d+g}\}$ by
\begin{equation}
e_{i_1}\leqslant_\nu\ldots\leqslant_\nu e_{i_{d+g}}
\label{ordenQO}
\end{equation}
in such a way that if
$\langle\nu,e_{i_j}\rangle=\langle\nu,e_{i_k}\rangle$ for
$1\leqslant i_j\leqslant d$ and $d+1\leqslant i_k\leqslant d+g$,
then $j<k$.
\end{Not}
By Lemma \ref{expo}  we have the inequalities
\begin{equation} \label{ch-order2}
\langle\nu,\l_1\rangle<\langle\nu,\l_2\rangle<\cdots<\langle\nu,\l_g\rangle
\end{equation}

In the following Proposition we use the convention
$\langle\nu,\infty\rangle=\infty>r$, for $r\in\R$ (see Notations
\ref{dplusg}).
\begin{Pro}     {\rm (cf.~Prop. 5.1 \cite{CoGP})}
Let $\nu\in\stackrel\circ\s\cap N$. With respect to the order
(\ref{ordenQO}), the vector $\nu$ defines a sequence $1\leqslant
j_1^{(k)}<\cdots<j_k^{(k)}\leqslant d+g$ such that
$w_k:=e_{j_1^{(k)}}+\cdots+e_{j_k^{(k)}}\in\J_k$ and
$\ord_{\J_k}(\nu)=\langle\nu,w_k\rangle$.  We set
$w_1=e_{j_1^{(1)}}:=e_{i_1}$. Suppose that
$w_k=e_{j_1^{(k)}}+\cdots+e_{j_r^{(k)}}$ is already defined. Set:
\begin{enumerate}
\item[(i)] $\ell_k(\nu) := \mathsf{ span}_\Q \{
    e_{j_{1}^{(k)}}, \dots , e_{j_{k}^{(k)}} \}.$

\item[(ii)] $n(k):=\left\{\begin{array}{lll}
                     0 & \mbox{ if } & \{e_{j_1^{(k)}},\ldots,e_{j_k^{(k)}}\} \setminus \{e_1,\ldots,e_d\}=\emptyset,\\
                     n & \mbox{ if } & \{e_{j_1^{(k)}},\ldots,e_{j_k^{(k)}}\}\setminus\{e_1,\ldots,e_d\}= \{ \l_n \}.\\
                     \end{array} \right.$

\item[(iii)] $t(k):=\min \{1\leqslant j \leqslant g+ 1\ |\
    \l_j \notin\ell_k(\nu)\} $.

\item[(iv)] $m(k):=\left\{\begin{array}{lll}
                    0 & \mbox{ if } &     (\{e_1,\ldots,e_d\}\cap\ell_k(\nu))
                    \setminus\{e_{j_1^{(k)}},\ldots,e_{j_k^{(k)}}\}= \emptyset  \\
                    m & \mbox{ if } &  (\{e_1,\ldots,e_d\}\cap\ell_k(\nu))
                    \setminus\{e_{j_1^{(k)}},\ldots,e_{j_k^{(k)}}\}=\{e_m\}.\\
                    \end{array}\right.$

\item[(v)] $i(k):=\min\{1\leqslant i\leqslant d+g\ |\
    w_k+e_i\in\J_{k+1}\}$, for $k=1,\ldots,d-1$.
\end{enumerate}

Set $a_{k+1}:=w_k+e_{i(k)}$ and
$b_{k+1}:=w_k-\l_{n(k)}+\l_{t(k)}+e_{m(k)}$. Then we have that:
\begin{equation}
w_{k+1}:=\left\{\begin{array}{ll}
                  a_{k+1} & \mbox{ if }\langle\nu,a_{k+1}\rangle\leqslant\langle\nu,b_{k+1}\rangle,\\
                  b_{k+1} & \mbox{ otherwise.}\\
                  \end{array}\right.
\label{rithm}
\end{equation}
\label{PruebaAlgQO}
\end{Pro}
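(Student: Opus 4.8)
The plan is to argue by induction on $k$, producing at each step the monomial $w_{k+1}$ minimizing $\langle \nu, \cdot \rangle$ among the generators of $\mathcal{J}_{k+1}$ that are divisible by $X^{w_k}$ in the appropriate sense, and then to check that this local greedy choice is in fact globally optimal. First I would set up the combinatorial picture: a generator of $\mathcal{J}_{k+1}$ is a sum $e_{j_1}+\cdots+e_{j_{k+1}}$ with $e_{j_1}\wedge\cdots\wedge e_{j_{k+1}}\neq 0$, $1\leqslant j_1,\dots,j_k\leqslant d$ and $1\leqslant j_{k+1}\leqslant d+g$. The key structural fact I would isolate first is a \emph{matroid exchange property}: the wedge products $e_{j_1}\wedge\cdots\wedge e_{j_r}\neq 0$ are exactly the independent sets of a matroid on $\{e_1,\dots,e_{d+g}\}$ (with $e_{d+i}=\lambda_i$), whose rank is $d$, and whose dependencies are governed precisely by the lattices $M_0\subset M_1\subset\cdots\subset M_g=M$ via Lemma \ref{expo}: namely $\lambda_j$ is dependent on $\{e_1,\dots,e_d,\lambda_1,\dots,\lambda_{j-1}\}$ modulo the span, and the span of any subset is determined by which $M_j$ it generates. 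Because $\mathsf{ord}_{\mathcal{J}_{k+1}}(\nu)$ is the minimum weight of an independent set of size $k+1$ using at most one of the $\lambda$'s, this is a weighted matroid optimization, and the greedy algorithm applies.

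Next I would handle the inductive step proper. Assume $w_k=e_{j_1^{(k)}}+\cdots+e_{j_k^{(k)}}$ realizes $\mathsf{ord}_{\mathcal{J}_k}(\nu)$ and is chosen greedily with respect to the order (\ref{ordenQO}). I want the minimal $\nu$-weight independent set of size $k+1$ extending this. There are two competing ways to extend:
\begin{enumerate}
\item[(a)] Keep all of $w_k$ and adjoin one further basis vector $e_i$ with $w_k\wedge e_i\neq 0$ of least $\nu$-weight; the least such index is $i(k)$ by definition (v), giving the candidate $a_{k+1}=w_k+e_{i(k)}$. One must verify $i(k)$ is well defined for $k\leqslant d-1$, i.e.\ that $w_k$ does not already span the whole matroid, which holds since $\dim\ell_k(\nu)=k<d$.
\item[(b)] If $w_k$ already contains a characteristic exponent $\lambda_{n(k)}$ (case $n(k)\neq 0$), it may be cheaper to \emph{swap} that $\lambda_{n(k)}$ for the next available characteristic exponent $\lambda_{t(k)}$ — the smallest index with $\lambda_{t(k)}\notin\ell_k(\nu)$, so that the replacement stays independent — while simultaneously reintroducing the coordinate basis vector $e_{m(k)}$ freed up inside $\ell_k(\nu)$ (definition (iv)); this gives $b_{k+1}=w_k-\lambda_{n(k)}+\lambda_{t(k)}+e_{m(k)}$. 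The point of (iv) and (iii) is exactly that after removing $\lambda_{n(k)}$ the span drops, so both $\lambda_{t(k)}$ and $e_{m(k)}$ can be added back keeping independence and size $k+1$, and by (\ref{ch-order2}) and the ordering convention this is the cheapest such reshuffle.
\end{enumerate}
One then shows every size-$(k+1)$ independent set with at most one $\lambda$ has $\nu$-weight $\geqslant\min(\langle\nu,a_{k+1}\rangle,\langle\nu,b_{k+1}\rangle)$, via a standard matroid exchange argument: given any optimal such set $W$, repeatedly exchange elements of the symmetric difference $W\triangle w_k$ to move toward $w_k$; the obstruction to getting all the way to $w_k$ is confined to a single exchange involving either a new coordinate vector (leading to $a_{k+1}$) or a $\lambda$-for-$\lambda$ swap (leading to $b_{k+1}$), and the basis-exchange axiom together with optimality of $w_k$ forces the weight inequality. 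Finally $w_{k+1}$ is the cheaper of the two, which is (\ref{rithm}), and one checks it is again greedy for (\ref{ordenQO}) so the induction continues.

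The main obstacle, I expect, is case (b): in the toric case (the Remark preceding Example \ref{ExamAlg}) no such swap is ever needed — the greedy chain $v_{i_1},\dots,v_{i_d}$ just grows monotonically — but Example \ref{ExamAlg} shows that here the optimal extension genuinely may require dropping a characteristic exponent already used in $w_k$ and replacing it. Making precise that $b_{k+1}$ as defined is the \emph{unique} cheapest such replacement, and that no more elaborate multi-element exchange can beat both $a_{k+1}$ and $b_{k+1}$, requires careful bookkeeping of the span $\ell_k(\nu)$ and of which $M_j$ is generated at each stage; the inequalities (\ref{ch-order2}) and the tie-breaking convention in (\ref{ordenQO}) are what close this gap. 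The rest — well-definedness of the auxiliary data (i)--(v), and verifying $w_{k+1}\in\mathcal{J}_{k+1}$ with $\mathsf{ord}_{\mathcal{J}_{k+1}}(\nu)=\langle\nu,w_{k+1}\rangle$ — is routine once the matroid exchange lemma is in place.
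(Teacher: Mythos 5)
There is a genuine gap, and it sits exactly where you put your main load-bearing claim: that the optimization is ``a weighted matroid optimization, and the greedy algorithm applies''. The feasible family here is \emph{not} a matroid. A generator of $\J_{k+1}$ uses at most one characteristic exponent, so the admissible sets are the common independent sets of the linear matroid on $\{e_1,\dots,e_{d+g}\}$ and the partition matroid ``at most one $\l_j$'' --- a matroid intersection, for which neither the greedy theorem nor the augmentation/basis-exchange axiom is available. This is not a pedantic point: with the characteristic exponents of Example \ref{ExamAlg}, $\l_1=(1/2,1/2,0)$, $\l_2=(1/2,1/2,1/4)$, take $A=\{e_1,\l_1\}$ and $B=\{e_1,e_2,\l_2\}$; both are admissible, $|B|>|A|$, yet $A\cup\{e_2\}$ is linearly dependent and $A\cup\{\l_2\}$ contains two characteristic exponents (and $e_1+\l_1+\l_2$ admits no admissible re-expression, so it does not lie in $\J_3$). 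So the augmentation axiom fails for precisely the data the paper flags as the new difficulty of the q.o.~case. Consequently your central step --- ``repeatedly exchange elements of $W\bigtriangleup w_k$ \dots the obstruction is confined to a single exchange \dots the basis-exchange axiom together with optimality of $w_k$ forces the weight inequality'' --- has no justification: confinement to a single exchange, and the claim that any competitor of size $k+1$ weighs at least $\min(\langle\nu,a_{k+1}\rangle,\langle\nu,b_{k+1}\rangle)$, is exactly what must be proved, and you defer it (``requires careful bookkeeping'') rather than prove it.

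For comparison, the paper's proof does not use any matroid machinery. It argues by induction and contradiction: assuming a strictly cheaper $w_{k+1}'\in\J_{k+1}$ exists, it first proves the Assertion that every coordinate vector $e_j$ with $j\neq m(k)$ occurring in $w_{k+1}'$ must already occur in $w_{k+1}$ (using the induction hypothesis, (\ref{HipAlgQO}) and the minimality built into (\ref{rithm})), and then rules out the remaining possibilities by a four-case analysis according to whether $m(k)=0$, whether $w_{k+1}=a_{k+1}$ or $b_{k+1}$, and whether $e_{m(k)}$ occurs in $w_{k+1}'$, using (\ref{ch-order2}) and the definitions of $t(k)$ and $m(k)$. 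That Assertion plus case analysis is the rigorous substitute for the single-exchange reduction you hoped to get for free from matroid theory; without it (or an argument adapted to matroid intersection, which is substantially harder), your proposal does not establish the proposition. Your identification of the two candidate moves $a_{k+1}$ and $b_{k+1}$, and of case (b) as the genuinely new phenomenon versus the toric case, is correct, but the proof of optimality is missing.
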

\begin{proof}We prove it by induction on $k$. For $k=1$ the assertion holds, since $\langle\nu,e_{i_1}\rangle={
\min}\{\langle\nu,e_i\rangle\ |\ 1\leqslant i\leqslant
d+1\}$. We suppose that the statement is true for
$k$ and we prove it for $k+1$. By induction hypothesis we have
$w_k=e_{j_1^{(k)}}+\cdots+e_{j_k^{(k)}}\in\J_k$ and $\mathsf{
ord}_{\J_k}(\nu)=\langle\nu,w_k\rangle$. If $n(k)=0$ the argument
coincides with the proof of Proposition 5.1 in \cite{CoGP}. We
assume that $n(k)\neq 0$ and that there exists
$w_{k+1}'\in\J_{k+1}$ different from $w_{k+1}$ and such that
\begin{equation}
\langle\nu,w_{k+1}'\rangle<\langle\nu,w_{k+1}\rangle.
\label{HipAlgQO}
\end{equation}
 \begin{Assertion}
       If the vector $e_j$ with $1\leqslant j\leqslant d$
and $j\neq m(k)$ appears in the expansion of $w_{k+1}'$ as sum of
$k+1$ linearly independent elements of $\{e_1,\ldots,e_{d+g}\}$,
then $e_j$ appears in the expansion of $w_{k+1}$.
 \end{Assertion}
{\em Proof of the assertion}. If $e_j$ does not appear in the
expansion of $w_{k+1}$, then $w_{k+1}'-e_j$ belongs to $\J_k$ and
we deduce
$\langle\nu,w_k\rangle\leqslant\langle\nu,w_{k+1}'-e_j\rangle<\langle\nu,w_{k+1}-e_j\rangle$,
where the first inequality follows by the induction hypothesis and
the second by (\ref{HipAlgQO}). Then we get
$\langle\nu,w_k+e_j\rangle<\langle\nu,w_{k+1}\rangle$ and since
$j\neq m(k)$ the vector $w_k+e_j$ belongs to $\J_{k+1}$, but this
is in contradiction with the choice of $w_{k+1}$ in the algorithm,
hence the assertion holds.      \end{proof}

Now we distinguish various cases:
\begin{enumerate}
 \item[(i)] If $m(k)=0$, then we obtain $w_{k+1}=w_k+e_{i(k)}$ by definition.
 By the Assertion there is  an integer $1\leqslant r\leqslant g$ such that $w_{k+1}'-w_{k+1}=\l_r-\l_{n(k)}$.
 By (\ref{ch-order2}) and (\ref{HipAlgQO}) we deduce that $r<n(k)$, but
 then $w_{k+1}'-e_{i(k)}=w_k-\l_{n(k)}+\l_r\in\J_k$ and $\langle\nu,w_{k+1}'-e_{i(k)}\rangle<\langle\nu,w_k\rangle$,
 This is a contradiction with the induction hypothesis.

 \item[(ii)]  If $m(k) \ne 0$ and $w_{k+1}=w_k+e_{i(k)}$ and if $e_{m(k)}$ does not appear in
        the expansion of $w_{k+1}'$ we apply the
        argument of case (i) to get a contradiction.

    \item[(iii)] If $m(k) \ne 0$, $w_{k+1}=w_k+e_{i(k)}$ and  if $e_{m(k)}$ appears in the
        expansion of $w_{k+1}'$ then by the Assertion
        we have that $w_{k+1}=w+\l_{n(k)}+e_{i(k)}$ and
        $w_{k+1}'= w+\l_r+e_{m(k)}$, where
        $w=w_k-\l_{n(k)}\in\J_{k-1}$. By definition
        $e_{m(k)} \in \ell_k(\nu)$, hence $\l_r$ does not belong to $\ell_k(\nu)$ since
$w_{k+1}'\in\J_{k+1}$. By definition of $t(k)$ we deduce that
$r\geqslant t(k)>n(k)$. Then it follows that
$\langle\nu,w_{k+1}'\rangle\geqslant\langle\nu,w_k-\l_{n(k)}+\l_{t(k)}+e_{m(k)}\rangle\geqslant\langle\nu,w_{k+1}\rangle$,
which contradicts (\ref{HipAlgQO}).

\item[(iv)]  If $m(k) \ne 0$ and
$w_{k+1}=w_k-\l_{n(k)}+\l_{t(k)}+e_{m(k)}$
    the assertion implies that
    $e_{m(k)}$ appears in the expansion of $w_{k+1}' \in\J_{k+1}$. We deduce that
     $w_{k+1}=w+\l_{t(k)}+e_{m(k)}$ and
    $w_{k+1}'=w+\l_r+e_{m(k)}$. Formula
    (\ref{HipAlgQO}) implies  that $r<t(k)$ which is a
    contradiction with the definition of $t(k)$.\hfill
    $\Box$
\end{enumerate}
The two choices appearing in (\ref{rithm})  occur (see Example
\ref{ExamAlg}).

\begin{remark}
  \label{op2}
Notice that  $m(k) > 0$ implies  $n(k)>0$  and $\langle \nu,
\l_{n(k)} \rangle < \langle \nu, e_{m(k)} \rangle$ (otherwise
$w_k' = w_k - \l_{n(k)} + e_{m(k)} \in \J_k$ would verify that
$\langle \nu, w_k' \rangle  <  \ord_{\J_k} (\nu)$).
\end{remark}

\begin{Lem} \label{c1}  {\rm (cf.~Lemma 5.3 \cite{CoGP})}
For all $\nu\in\stackrel\circ\s\cap N$ we have that \[ \f_1 (\nu)
\leqslant \f_2 (\nu) \leqslant \cdots \leqslant \f_d(\nu).\]
\end{Lem}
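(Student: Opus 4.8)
I want to show that, for every $\nu \in \stackrel{\circ}{\s} \cap N$, the sequence $\f_1(\nu) \le \f_2(\nu) \le \cdots \le \f_d(\nu)$ is nondecreasing. Recall $\f_k = \ord_{\J_k} - \ord_{\J_{k-1}}$ for $k \ge 2$ and $\f_1 = \ord_{\J_1}$, so the claim is equivalent to the convexity statement
\[
\ord_{\J_{k}}(\nu) - \ord_{\J_{k-1}}(\nu) \;\le\; \ord_{\J_{k+1}}(\nu) - \ord_{\J_{k}}(\nu) \quad \text{for } k = 1, \dots, d-1,
\]
with the convention $\ord_{\J_0} = 0$. The natural tool is Proposition \ref{PruebaAlgQO}, which computes $\ord_{\J_k}(\nu) = \langle \nu, w_k \rangle$ for the explicitly constructed vectors $w_k$, and which describes $w_{k+1} - w_k$ as a single element $e_{i(k)}$ in the "$a$" case or as $-\l_{n(k)} + \l_{t(k)} + e_{m(k)}$ in the "$b$" case. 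So the proof is a bookkeeping argument: I fix $k$ and compare $\langle \nu, w_k - w_{k-1}\rangle$ with $\langle \nu, w_{k+1} - w_k\rangle$, going through the cases of which branch of the algorithm produced $w_k$ and which produced $w_{k+1}$.

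First I would set up the case distinction according to whether $w_k = w_{k-1} + e_{i(k-1)}$ (type $a$) or $w_k = w_{k-1} - \l_{n(k-1)} + \l_{t(k-1)} + e_{m(k-1)}$ (type $b$), and similarly for $w_{k+1}$ versus $w_k$; this gives four combinations. The key leverage in each case is the \emph{minimality} built into the algorithm: $w_{k+1}$ is, by (\ref{rithm}) and the characterization in Proposition \ref{PruebaAlgQO}, the $\leqslant_\nu$-minimal vector of the form (sum of $k+1$ independent $e_j$'s) lying in $\J_{k+1}$ and refining $w_k$. In particular, for \emph{any} index $e$ with $w_k + e$ representing an element of $\J_{k+1}$ (independence plus the constraint on repeated $\l$'s), one has $\langle \nu, w_{k+1} - w_k \rangle \le \langle \nu, e \rangle$. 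Dually, by the induction/minimality for step $k$, the increment $w_k - w_{k-1}$ is as small as possible among admissible refinements of $w_{k-1}$. The heart of the matter is to produce, from the data defining the increment $w_k - w_{k-1}$, an admissible candidate for a refinement of $w_k$ inside $\J_{k+1}$ whose $\nu$-value is $\le \langle \nu, w_k - w_{k-1}\rangle$; minimality of $w_{k+1}$ then forces $\langle \nu, w_{k+1} - w_k \rangle \le \langle \nu, w_k - w_{k-1}\rangle$, which is exactly the desired inequality.

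Concretely, when $w_k - w_{k-1} = e_{i(k-1)}$ is a "pure" generator $e_{i}$ with $1 \le i \le d$, I would check that $e_i \notin \ell_k(\nu)$ is impossible (otherwise $w_{k-1} + e_i$ would already refine $w_{k-1}$ to something in $\J_k$ smaller than or equal to $w_k$ with fewer constraints), hence one can try to adjoin it or a comparable characteristic exponent to $w_k$; the span $\ell_k(\nu)$ of the support of $w_k$ contains $\ell_{k-1}(\nu) + \Q e_i$, so I can build the refinement of $w_k$ explicitly. When $w_k - w_{k-1} = -\l_{n(k-1)} + \l_{t(k-1)} + e_{m(k-1)}$, I would use Remark \ref{op2} ($n(k-1) > 0$ and $\langle \nu, \l_{n(k-1)}\rangle < \langle \nu, e_{m(k-1)}\rangle$) together with the monotonicity (\ref{ch-order2}) of the $\langle \nu, \l_j\rangle$ to bound this increment from below by something realizable one step up; the inequality $\langle \nu, \l_{t(k-1)}\rangle$-terms telescope correctly because $t(\cdot)$ and $n(\cdot)$ are defined by the \emph{same} spans. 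This mirrors the proof of Lemma 5.3 in \cite{CoGP} for the toric case, but the presence of the $b$-type increments — which have no analogue there — is exactly where the q.o.~case is genuinely harder.

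\textbf{Main obstacle.} The difficulty is entirely in the mixed case: $w_k$ is of type $a$ while $w_{k+1}$ is of type $b$ (or vice versa), because then the two increments live in "different" parts of the data — one is a coordinate $e_i$, the other a combination $-\l_n + \l_t + e_m$ — and there is no direct substitution. I expect to handle this by a more careful analysis of how $\ell_k(\nu)$, $\ell_{k+1}(\nu)$, the indices $n(k), t(k), m(k)$ and the chain of inequalities $\langle \nu, \l_1\rangle < \cdots < \langle \nu, \l_g\rangle$ interact, possibly splitting further on whether $e_{m(k)}$ lies in $\ell_{k-1}(\nu)$, and invoking the minimality characterization of the $w$'s at \emph{both} steps $k-1 \to k$ and $k \to k+1$ simultaneously. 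Establishing that the candidate refinement one writes down is genuinely \emph{admissible} — i.e.\ that its $k+1$ summands are linearly independent and that it actually lies in $\J_{k+1}$ as defined by (\ref{j-kQO}), respecting the at-most-one-characteristic-exponent constraint — is the delicate bookkeeping point, and is where I would spend most of the effort.
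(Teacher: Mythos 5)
Your overall framing is the same as the paper's --- reduce the statement to $\langle\nu,w_k-w_{k-1}\rangle\leqslant\langle\nu,w_{k+1}-w_k\rangle$ for the vectors $w_k$ produced by Proposition \ref{PruebaAlgQO} and argue by cases according to which branch of (\ref{rithm}) gives each increment --- but the mechanism you put at the heart of the argument points in the wrong direction and cannot prove the lemma. You propose to build, from the increment $w_k-w_{k-1}$, an admissible refinement of $w_k$ inside $\J_{k+1}$ of $\nu$-value at most $\langle\nu,w_k-w_{k-1}\rangle$, and then invoke minimality of $w_{k+1}$; that chain yields $\f_{k+1}(\nu)\leqslant\f_k(\nu)$, which is the \emph{reverse} of the inequality to be proved (your closing claim that this "is exactly the desired inequality" is a sign change, not a typo you can absorb: if such a competitor always existed, combining it with the lemma would force $\f_k(\nu)=\f_{k+1}(\nu)$ for all $k$, contradicting e.g.\ the example following Lemma \ref{lema-obs}, where $\f_1(\nu)=8<\f_2(\nu)=12<\f_3(\nu)=17$). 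Moreover the construction has no admissible realization in the basic case: when $w_k=w_{k-1}+e_{i(k-1)}$, the vector $e_{i(k-1)}$ is already a summand of $w_k$, hence lies in $\ell_k(\nu)$, so $w_k+e_{i(k-1)}$ is not a sum of $k+1$ linearly independent vectors and is not an element of $\J_{k+1}$; "adjoining it or a comparable characteristic exponent to $w_k$" is exactly what the definition of $\J_{k+1}$ forbids.

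The correct use of minimality, and the one the paper makes, is the transpose of yours: from the data of the step $k\to k+1$ one builds a competitor in $\J_k$ that refines $w_{k-1}$, and the fact that $\ord_{\J_k}(\nu)=\langle\nu,w_k\rangle$ is minimal bounds $\f_k(\nu)$ from above. Concretely, when $w_{k+1}=w_k+e_{i(k)}$ and $w_k$ is of type $b$, the vector $e_{i(k)}$ cannot be a characteristic exponent, so $w_{k-1}+e_{i(k)}\in\J_k$ and $\f_k(\nu)\leqslant\langle\nu,e_{i(k)}\rangle=\f_{k+1}(\nu)$; when $w_{k+1}$ is of type $b$ one uses $w_{k-1}+e_{m(k)}\in\J_k$ to get $\f_k(\nu)\leqslant\langle\nu,e_{m(k)}\rangle$, and then $\langle\nu,e_{m(k)}\rangle<\langle\nu,\l_{t(k)}-\l_{n(k)}+e_{m(k)}\rangle=\f_{k+1}(\nu)$ because $t(k)>n(k)$ together with (\ref{ch-order2}); the residual subcase $e_{i(k-1)}=\l_{n(k)}$ is settled directly from $\langle\nu,\l_{n(k)}\rangle<\langle\nu,e_{m(k)}\rangle$ (Remark \ref{op2}). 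Since, in addition, you explicitly leave the mixed $a$/$b$ cases unresolved ("I expect to handle this by a more careful analysis"), what you have is a plan whose central step is oriented the wrong way, not a proof; reversing the roles --- competitors live in $\J_k$ and are built from step-$(k+1)$ data, not the other way around --- is the missing idea.
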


\begin{proof}The assertion is equivalent to the inequality
$\langle\nu,w_k-w_{k-1}\rangle\leqslant\langle\nu,w_{k+1}-w_k\rangle$
for $1\leqslant k<d$, where $w_{k-1},w_k$ and $w_{k+1}$ are
defined by the algorithm and $w_0:=0$. We distinguish the cases:

\begin{enumerate}
\item[(i)] If $\phi_k(\nu)=\langle\nu,e_{i(k-1)}\rangle$ and
$\phi_{k+1}(\nu)=\langle\nu,e_{i(k)}\rangle$
 the result follows by definition of $e_{i(k-1)}$ in Proposition \ref{PruebaAlgQO}.

\item[(ii)] If  $\phi_k(\nu)=\langle\nu,e_{i(k-1)}\rangle$ and
$\phi_{k+1}(\nu)=\langle\nu,\l_{t(k)}-\l_{n(k)}+e_{m(k)}\rangle$,
 we get $w_k= w_{k-1}+e_{i(k-1)}$ and $w_{k+1} =w_k-\l_{n(k)}+\l_{t(k)}+e_{m(k)}$.
 We distinguish two subcases below:
 \begin{enumerate}
\item [(ii.1)] If $e_{i(k-1)}=\l_{n(k)}$ we get
    $\langle\nu,e_{m(k)}\rangle>\langle\nu,\l_{n(k)}\rangle$  by the definition of
    the algorithm
    hence we deduce
    $\phi_k(\nu)=\langle\nu,\l_{n(k)}\rangle<\langle\nu,\l_{t(k)}-\l_{n(k)}+e_{m(k)}\rangle=\phi_{k+1}(\nu)$ since
    $t(k)>n(k)$.
\item [(ii.2)] If $e_{i(k-1)}\neq\l_{n(k)}$ we obtain
    $1\leqslant i(k-1)\leqslant d$ and
$\phi_k(\nu)=\langle\nu,e_{i(k-1)}\rangle\leqslant
\langle\nu,e_{m(k)}\rangle<\langle\nu,\l_{t(k)}-\l_{n(k)}+e_{m(k)}\rangle=\phi_{k+1}(\nu)$.
\end{enumerate}

\item[(iii)] If
$\phi_k(\nu)=\langle\nu,\l_{t(k-1)}-\l_{n(k-1)}+e_{m(k-1)}\rangle$
and $\phi_{k+1}(\nu)=\langle\nu,e_{i(k)}\rangle$ then  we get
$w_k=w_{k-1}-\l_{n(k-1)}+\l_{t(k-1)}+e_{m(k-1)}$ and $w_{k+1}
=w_k+e_{i(k)}$. Since the vector $e_{i(k)}$ is not a
characteristic exponent we deduce that $w_{k-1}+e_{i(k)}\in\J_k$.
Then we get the inequalities
$\langle\nu,w_k\rangle\leqslant\langle\nu,w_{k-1}+e_{i(k)}\rangle$,
and it follows that:
$\phi_k(\nu)=\langle\nu,\l_{t(k-1)}-\l_{n(k-1)}+e_{m(k-1)}\rangle\leqslant\langle\nu,e_{i(k)}\rangle=\phi_{k+1}(\nu)$.

 \item[(iv)] If $\phi_k(\nu)=\langle\nu,\l_{t(k-1)}-\l_{n(k-1)}+e_{m(k-1)}\rangle$ and
 $\phi_{k+1}(\nu)=\langle\nu,\l_{t(k)}-\l_{n(k)}+e_{m(k)}\rangle$ we obtain
 $w_k=w_{k-1}-\l_{n(k-1)}+\l_{t(k-1)}+e_{m(k-1)}$ and $w_{k+1}=w_k-\l_{n(k)}+\l_{t(k)}+e_{m(k)}$.
 Since the vector $w_{k-1}+e_{m(k)}$ belongs to $\J_k$ we deduce
  $\langle\nu,w_k\rangle<\langle\nu,w_{k-1}+e_{m(k)}\rangle$. We get
   $\phi_k(\nu)=\langle\nu,\l_{t(k-1)}-\l_{n(k-1)}+e_{m(k-1)}\rangle<
   \langle\nu,e_{m(k)}\rangle<\langle\nu,\l_{t(k)}-\l_{n(k)}+e_{m(k)}\rangle=\phi_{k+1}(\nu)$.
\end{enumerate}
\end{proof}

\begin{definition}     \label{ele}
If $(\nu, s) \in  \stackrel{\circ}{\s} \times \Z_{\geqslant 0}$
there is a unique integer $0 \leqslant k \leqslant d$ such that
$(\nu, s) \in A_k$. We set
\[
 \ell_{\nu}^s := \mathsf{span}_\Q \{ e_j \mid 1 \leqslant j < d + t(k), \, \langle \nu, e_j \rangle \leqslant s \}.
\]
   (cf.~Definition \ref{Ak},  Proposition \ref{PruebaAlgQO} and Lemma \ref{c1}).
\end{definition}

\begin{Lem} \label{crucial}            {\rm (cf.~ Lemma 5.7 \cite{CoGP})}
If $(\nu, s) \in A_k$  and $w_k'  = e_{r_1}
+ \cdots + e_{r_k} \in {\mathcal
  J}_{k}$ is a vector such that
 $ \mathsf{ ord}_{{\mathcal J}_{k}} (\nu)  = \langle \nu, w_k'
\rangle $,  then we have
\[  \ell_\nu^s =  \ell_k(\nu) =  \mathsf{
span}_\Q \{ e_{r_1}, \dots , e_{r_k} \}.\]
 In addition, if $1
\leqslant j < n(k)$ and
 $\l_j = \sum_{i=1}^d \l_{j, i } e_i $ then $\l_{j,i} \ne 0$
 implies that $\langle \nu, e_i \rangle \leqslant \langle \nu, \l_j
 \rangle \leqslant s$.
\end{Lem}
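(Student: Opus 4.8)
The plan is to compare the two descriptions of $\ell_\nu^s$: on one hand the span $\ell_k(\nu)=\mathsf{span}_\Q\{e_{j_1^{(k)}},\dots,e_{j_k^{(k)}}\}$ coming from the algorithm of Proposition \ref{PruebaAlgQO}, and on the other hand the span $\mathsf{span}_\Q\{e_j\mid 1\leqslant j<d+t(k),\ \langle\nu,e_j\rangle\leqslant s\}$ from Definition \ref{ele}. First I would observe that $\ell_k(\nu)$ does not depend on the particular minimizing vector $w_k'=e_{r_1}+\cdots+e_{r_k}$: indeed, if $w_k'$ and $w_k$ both realize $\mathsf{ord}_{\J_k}(\nu)$, then exchanging a generator of one for a generator of the other preserves linear independence and the value $\langle\nu,\cdot\rangle$ (an exchange-lemma argument on the matroid of the $e_i$'s), so $\mathsf{span}_\Q\{e_{r_1},\dots,e_{r_k}\}=\ell_k(\nu)$; this is the matroid-theoretic step and is the same as in \cite{CoGP}.

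Next I would prove $\ell_k(\nu)=\ell_\nu^s$ by a double inclusion, using the characterization of $A_k$, namely $\f_k(\nu)\leqslant s<\f_{k+1}(\nu)$, together with the explicit form of $w_k$ and $w_{k+1}$ given by the recursion (\ref{rithm}). For the inclusion $\ell_k(\nu)\subseteq\ell_\nu^s$: each generator $e_{j_i^{(k)}}$ of $w_k$ satisfies $\langle\nu,e_{j_i^{(k)}}\rangle\leqslant\mathsf{ord}_{\J_k}(\nu)\leqslant\ldots$; one uses (\ref{ch-order2}) and the index $t(k)$ to bound which characteristic exponents can occur in $w_k$, showing any $e_{j_i^{(k)}}=\l_n$ forces $n<t(k)$, i.e.\ the index is $<d+t(k)$, and that $\langle\nu,e_{j_i^{(k)}}\rangle\leqslant s$ because otherwise $\f_k(\nu)>s$, contradicting $(\nu,s)\in A_k$. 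For the reverse inclusion $\ell_\nu^s\subseteq\ell_k(\nu)$: if $e_j$ with $j<d+t(k)$ and $\langle\nu,e_j\rangle\leqslant s$ were not in $\ell_k(\nu)$, then $w_k+e_j$ would be a linearly independent sum of $k+1$ vectors, hence lie in $\J_{k+1}$, and $\f_{k+1}(\nu)\leqslant\langle\nu,w_k+e_j\rangle-\mathsf{ord}_{\J_k}(\nu)=\langle\nu,e_j\rangle\leqslant s$, contradicting $s<\f_{k+1}(\nu)$; the case $j\leqslant d$ is immediate from this, and for $d<j<d+t(k)$, i.e.\ $e_j=\l_n$ with $n<t(k)$, one additionally invokes the definition of $t(k)$ (which says $\l_n\in\ell_k(\nu)$ for $n<t(k)$) to finish.

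Finally, for the last assertion, suppose $1\leqslant j<n(k)$ and $\l_{j,i}\neq 0$. Since $j<n(k)\leqslant t(k)$, the definition of $t(k)$ gives $\l_j\in\ell_k(\nu)$, and because $\ell_k(\nu)=\ell_\nu^s$ is spanned by coordinate vectors $e_i$ with $\langle\nu,e_i\rangle\leqslant s$, writing $\l_j=\sum\l_{j,i}e_i$ in that span forces $\langle\nu,e_i\rangle\leqslant s$ for every $i$ with $\l_{j,i}\neq0$; the intermediate bound $\langle\nu,e_i\rangle\leqslant\langle\nu,\l_j\rangle$ follows from Remark \ref{op2} applied with $m=i$ (and the normalization of $\z$), since if some coordinate $e_i$ appearing in $\l_j$ had $\langle\nu,e_i\rangle>\langle\nu,\l_j\rangle$ one could lower $\mathsf{ord}_{\J_j}$, and $\langle\nu,\l_j\rangle\leqslant\langle\nu,\l_{n(k)-1}\rangle<\f_k(\nu)\leqslant s$ by (\ref{ch-order2}). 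I expect the main obstacle to be the bookkeeping in the reverse inclusion when $e_j$ is a characteristic monomial $\l_n$ with $n<t(k)$: one must rule out that $\l_n$ is "already used up" inside $w_k$ in a way that makes $w_k+\l_n$ dependent, which is exactly where the hypothesis $n<t(k)$ and the structure of the recursion (\ref{rithm}) — in particular the $b_{k+1}$ branch — are needed, and this is precisely the point that was simpler in the toric case of \cite{CoGP}.
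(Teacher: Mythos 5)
Your reverse inclusion $\ell_\nu^s\subseteq\ell_k(\nu)$ is correct and is essentially the paper's argument, but the two places where you delegate the work are exactly the places where the real content lies, and your substitutes do not hold up. First, the claim that any minimizer $w_k'$ spans the same subspace as $w_k$ is \emph{not} a matroid-exchange fact: an exchange preserving independence and the value of $\langle\nu,\cdot\rangle$ does not preserve the span, and the claim is simply false if one does not use the hypothesis $(\nu,s)\in A_k$. Indeed, if two coordinate vectors $e_1,e_2$ both realize $\ord_{\J_1}(\nu)$, they are minimizers with different spans; but then $e_1+e_2\in\J_2$ gives $\f_2(\nu)\leqslant\f_1(\nu)$, hence $\f_1(\nu)=\f_2(\nu)$ by Lemma \ref{c1} and $A_1$ contains no pair $(\nu,s)$. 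Equality of the spans is thus logically tied to $\f_k(\nu)<\f_{k+1}(\nu)$, and the paper proves it exactly that way: a generator of $w_k'$ outside $\ell_k(\nu)$ forces $\f_k(\nu)=\f_{k+1}(\nu)$, with a separate case when that generator is a characteristic exponent --- note also that elements of $\J_k$ contain at most one characteristic exponent, so ``the matroid of the $e_i$'s'' is not even the right structure and exchanges can leave $\J_k$. Second, in the inclusion $\ell_k(\nu)\subseteq\ell_\nu^s$ your ``otherwise $\f_k(\nu)>s$'' is not immediate: $\ord_{\J_k}(\nu)\leqslant s$ is false in general, and a generator may have entered $w_k$ through the $b$-branch of (\ref{rithm}), where the increment $\f_{l+1}(\nu)=\langle\nu,\l_{t(l)}-\l_{n(l)}+e_{m(l)}\rangle$ is not the pairing of any single generator; one needs Remark \ref{op2} and $n(l)<t(l)$ to bound $\langle\nu,e_{j_r^{(k)}}\rangle$ by $\f_{l+1}(\nu)$, and then Lemma \ref{c1}. (By contrast, the case you single out as the main obstacle, $e_j=\l_n$ with $n<t(k)$ in the reverse inclusion, is immediate from the definition of $t(k)$, as you yourself note.)

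The argument for the last assertion also breaks down. Its premise, that $\ell_\nu^s$ is spanned by coordinate vectors $e_i$ with $\langle\nu,e_i\rangle\leqslant s$, contradicts Definition \ref{ele}: the spanning set also contains the characteristic exponents $\l_n$, $n<t(k)$, with $\langle\nu,\l_n\rangle\leqslant s$, and such a $\l_n$ may involve coordinates $e_i$ with $\langle\nu,e_i\rangle>s$ (in the paper's example after Lemma \ref{lema-obs}, with $\nu=(12,16,18)$ and $12\leqslant s<16$, one has $\l_1\in\ell_\nu^s$ although $\langle\nu,e_2\rangle=16>s$). Hence $\l_j\in\ell_\nu^s$ gives no bound on the pairings of the $e_i$ in its support. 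Moreover, Remark \ref{op2} concerns only the indices $n(k),m(k)$ produced by Proposition \ref{PruebaAlgQO}, so it cannot be applied ``with $m=i$'' to an arbitrary coordinate of an arbitrary $\l_j$, and ``lowering $\ord_{\J_j}$'' contradicts nothing, since no minimality at level $j$ is in play; your chain $\langle\nu,\l_j\rangle\leqslant\langle\nu,\l_{n(k)-1}\rangle<\f_k(\nu)$ is likewise unsupported. The paper argues by minimality at level $k$: if $\l_{j,i}\neq0$ and $\langle\nu,e_i\rangle>\langle\nu,\l_j\rangle$, the competitor $w_k'=\l_j+\sum_{r\neq i,\,m(k)}^{\langle\nu,e_r\rangle\leqslant s}e_r$ lies in $\J_k$ and satisfies $\langle\nu,w_k'\rangle<\langle\nu,w_k\rangle$, contradicting $\ord_{\J_k}(\nu)=\langle\nu,w_k\rangle$; and $\langle\nu,\l_j\rangle\leqslant s$ comes from (\ref{ch-order2}) together with $\langle\nu,\l_{n(k)}\rangle\leqslant s$, which is part of the forward inclusion, not from the bound via $\f_k(\nu)$ that you propose.
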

\begin{proof}Let $w_k =\sum_{r=1}^k e_{j_r^{(k)}}$ be the vector defined by Proposition
\ref{PruebaAlgQO}. We denote by $\ell = \ell_k (\nu) $ (resp. by
$\ell'$) the linear subspace of $M_\Q$ spanned by the vectors in
the expansion of $w_k$ (resp. $w_k'$).
We prove first that $\ell =  \ell'$. If $\ell'$ and $\ell$ are
distinct we verify that $\phi_k(\nu)=\phi_{k+1}(\nu)$. Suppose that
there exists a vector $e_{j_0}$ appearing in the expansion of
$w_k'$ and such that $e_{j_0}\notin\ell$. We distinguish two
cases:

\begin{enumerate}
\item[(i)] If $1\leqslant j_0\leqslant d$ then the vector
    $w_{k+1}':=w_k+e_{j_0}$ belongs to $\J_{k+1}$.
    We get $\mathsf{
    ord}_{\J_{k+1}}(\nu)\leqslant\langle\nu,w_{k+1}'\rangle=\mathsf{
    ord}_{\J_k}(\nu)+\langle\nu,e_{j_0}\rangle$ hence
     $\phi_{k+1}(\nu)\leqslant\langle\nu,e_{j_0}\rangle$.
    If $\langle\nu,e_{j_0}\rangle>\phi_k(\nu)$ then the
vector $w_{k-1}':=w_k'-e_{j_0}$ belongs to $\J_{k-1}$ and we find
the contradiction $\langle\nu,w_{k-1}'\rangle=\mathsf{
ord}_{\J_k}(\nu)-\langle\nu,e_{j_0}\rangle<\mathsf{
ord}_{\J_k}(\nu)-\phi_k(\nu)=\mathsf{ ord}_{\J_{k-1}}(\nu)$. Hence
we obtain  $\phi_k (\nu) \geqslant\langle\nu,e_{j_0}\rangle$ thus
$\phi_k(\nu)=\phi_{k+1}(\nu)$ holds by Lemma \ref{c1}.

\item [(ii)] If for any $1\leqslant j\leqslant d$, with $e_j$
appearing in the expansion of
    $w_k'$ then
    $e_j \in\ell$, then we get  $w_k =  w+e_r$
    and $w_k'=  w+\l_{n'}$, for $1\leqslant r\leqslant d+g$ and
    $1\leqslant n'\leqslant g$. Then
    $\langle\nu,\l_{n'}\rangle=\langle\nu,e_r\rangle$ and
    by (\ref{ch-order2}) the vector $e_r$ can not
    be a characteristic exponent. Thus $n(k)=0$ and
    we get $w_{k+1}=w_k+\l_{n'}$.
We obtain
    $\phi_{k+1}(\nu)=\langle\nu,\l_{n'}\rangle=\langle\nu,e_j\rangle\leqslant\phi_k(\nu)$
     thus
$\phi_k(\nu)=\phi_{k+1}(\nu)$ holds by Lemma \ref{c1}.
\end{enumerate}

      We obtain that $\ell  \subset \ell_\nu^s$ by checking that
 $\langle\nu,e_{j_r^{(k)}}\rangle\leqslant
s$ for $r=1,\ldots,k$.
    If $k=1$ the inequality is trivial. If $k>1$
and $e_{j_r^{(k)}}$ is a vector appearing in the expansion of
$w_k$ then there are two possibilities.
\begin{enumerate}
\item [(i)]  If there is $ 1 \leqslant l \leqslant k-1 $
    such that $w_{l+1}=w_l+e_{j_r^{(k)}}$ then we get
    $\phi_{l+1}(\nu)=\langle\nu,e_{j_r^{(k)}}\rangle$.
    By Lemma \ref{c1}
    we deduce  $\langle\nu,e_{j_r^{(k)}}\rangle\leqslant s$.

\item [(ii)] If there is $ 1 \leqslant l \leqslant k-1 $
    such that $w_{l+1}=w_l-\l_{n(l)}+\l_{t(l)}+e_{m(l)}$,
    with either $\l_{t(l)}=e_{j_r^{(k)}}$ or
    $e_{m(l)}=e_{j_r^{(k)}}$ then  in both cases we get
    $\phi_{l+1}(\nu)\geqslant\langle\nu,e_{j_r^{(k)}}\rangle$,
    since
    $\langle\nu,\l_{n(l)}\rangle<\langle\nu,e_{m(l)}\rangle$
    and $n(l)<t(l)$.
\end{enumerate}

We prove that $\ell_{\nu}^s \subset \ell$.   If $\langle \nu, e_j
\rangle \leqslant s$ for $1 \leqslant j \leqslant d$ and if $e_j
\notin \ell$ then $\widetilde w_{k+1}:=w_k+e$ belongs to
          $\J_{k+1}$; we deduce that
          $\phi_{k+1}(\nu)\leqslant\langle\nu,\widetilde
          w_{k+1}-w_k\rangle\leqslant s$ contradicting the hypothesis.
We have also shown that $\l_{n(k)} \in \ell$ hence if $n(k) < j <
t(k)$ then $\l_j \in \ell$ by definition of $t(k)$. If $1
\leqslant j < n(k)$ then $\langle \nu, \l_j \rangle    < \langle
\nu, \l_{n(k)} \rangle \leqslant s$ by (\ref{ch-order2})   and it
is easy to see from the algorithm in Proposition \ref{PruebaAlgQO}
that $\l_j$ belongs to $\mathsf{span}_\Q \{ e_i \mid 1 \leqslant i
\leqslant d, \langle \nu , e_i \rangle  \leqslant s \} \subset
\ell$. It follows that $\ell_{\nu}^s \subset \ell$.

For the last assertion notice that if $\l_{j, i} \ne 0$ and if
$\langle \nu, e_i \rangle > \langle \nu, \l_j \rangle$ then we get
a contradiction since $w_k': = \l_j + \sum_{r=1, \dots, d, r \ne
i, m(k)}^{\langle \nu, e_r \rangle \leqslant s} e_r  $ belongs to
$\J_k$ and verifies that $\langle \nu, w_k' \rangle < \langle \nu,
w_k \rangle$.
 \hfill $\    {\Box}$

\begin{Lem} \label{lema-obs}
If $(\nu, s) \in A_k$ we have the following inequalities
\begin{enumerate}
 \item[(i)]      $
s < \langle \nu, \l_{t(k)}  \rangle $   if $n(k) =0$ or if $n(k),
m(k) \ne 0 $ and $\langle \nu, e_{m(k)} \rangle \leqslant s$.

\item[(ii)]
$s < \langle \nu, e_{m(k) } + \l_{t(k)} - \l_{n(k)}  \rangle$ if $n(k) \ne 0$.
\end{enumerate}
\end{Lem}
{\em Proof.} If  $n(k) =0$ then the vector $\l_{t(k)} +
e_{j_1^{(k)}} + \cdots +  e_{j_1^{(k)}} $ belongs to $\J_{k+1}$
hence $\mathsf{ ord}_{{\mathcal J}_{k+1}} (\nu)  \leqslant \langle
\nu, \l_{t(k)} \rangle +
    \mathsf{ ord}_{{\mathcal J}_{k}} (\nu)$.
We deduce from this that  $\f_{k+1} (\nu) \leqslant \langle \nu,
\l_{t(k)} \rangle $. If $n(k) , m(k) \ne 0$ then we get
$\phi_{k+1}(\nu)\leqslant\langle \nu, e_{m(k) } + \l_{t(k)} -
\l_{n(k)} \rangle$ by proof of Lemma \ref{c1}. This implies that
$s < \langle \nu, \l_{t(k)} \rangle$ since $\langle \nu,  e_{m(k)}
-\l_{n(k)}\rangle >0 $ by Remark \ref{op2}.
\end{proof}

\begin{remark}
If $(\nu, s) \in A_k$,  $n(k), m(k) \ne 0$  it may happen that
$\langle \nu, \l_{t(k)} \rangle  \leqslant s$. For instance,
consider a q.o.~branch with characteristic exponents $ \l_1 :=
(\frac{1}{2}, \frac{1}{8},0)$ and $\l_{2} := ( \frac{1}{2},
\frac{1}{8},\frac{1}{18})$. If $\nu = (12, 16, 18) \in N$ then we
get  $\f_1 (\nu) = 8$, $\f_2 (\nu) = 12$ and $\f_3(\nu)=17$. If $12 \leqslant s <
17$ then we obtain  $(\nu,s) \in A_2$, $n(2) =1$, $m(2) = 2$,
$t(2) =2$ and $\langle \nu, \l_2 \rangle = 9 \leqslant s$.
\end{remark}

\begin{definition}
\
\begin{enumerate}

\item [(i)] If $\eta  =\eta_{1}e_1+\cdots+\eta_{d}e_d \in M$ we
denote by
    $\ell(\eta)=\mbox{span}_\Q\{e_i\ |\ \eta_{i}\neq 0\}$
the smallest coordinate subspace containing $\eta$.

\item[(ii)] With respect to the fixed vector
$\nu\in\stackrel\circ\s\cap
    N$, we set
$
    e_{q_{\eta}}=\max_{\leqslant_\nu}\{\{e_1,\ldots,e_d\}\cap\ell(\eta)\}$.
\end{enumerate}
 \label{deflandaq}
\end{definition}

\begin{remark} \label{nk}
It is easy to see from the algorithm in Proposition
\ref{PruebaAlgQO} that if  $1\leqslant j<n(k)$ then
$\ell(\l_j)\subset\ell_\nu^s$ and if $e_{m(k)}\neq \infty$ then
$q_{\l_{n(k)}}=m(k)$.
\end{remark}

\begin{definition}
Let $(\nu,s)\in A_k$ for some $k, s >0$. With the notations of
Proposition \ref{PruebaAlgQO} we define the integer $p(k)$ by
\[ p(k):=\displaystyle\left\{\begin{array}{ll}
\max \{ 0\leqslant j\leqslant  g \mid
\langle\nu,\l_j\rangle\leqslant s \} & \mbox{ if }n(k)=0,
\\
\max ( \{ n(k) \} \cup  \{ n(k) <  j<t(k) \mid \langle\nu,
e_{m(k)} -\l_{n(k)}+ \l_j \rangle\leqslant s \} ) & \mbox{ if }
n(k)>0 .
\end{array}\right.
\]

\label{defpk}
\end{definition}

\begin{remark}
Notice that $\langle\nu,\l_{p(k)}\rangle\leqslant s$.
\end{remark}

\begin{Lem} \label{ap3Crucial}
If $(\nu,s)\in A_k$ for $1 \leqslant k \leqslant d$ and  $1
\leqslant j < p(k)$ the following inequality holds
\begin{equation} \label{cla}
 \langle \nu, \l_{p(k)} - \l_j \rangle  \leqslant s- \langle \nu ,  e_{q_{\l_j}} \rangle.
\end{equation}
If in addition  $n(k) >0$ and ${m(k)} \ne 0$ we have
\begin{equation} \label{cla2}
\langle \nu ,  e_{q_{\l_j}} \rangle  \leqslant \langle \nu, e_{m
(k) } - \l_{n(k)} + \l _j \rangle.
\end{equation}
\end{Lem}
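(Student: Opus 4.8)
The plan is to exploit the characterization of the linear span $\ell_\nu^s = \ell_k(\nu)$ from Lemma \ref{crucial} together with the minimality property of $w_k$ (equivalently $\ord_{\J_k}(\nu)$) from Proposition \ref{PruebaAlgQO}. The strategy for \eqref{cla} is to produce, for each $j$ with $1 \leqslant j < p(k)$, a competitor vector $w'_k \in \J_k$ which agrees with an optimal expansion of $\ord_{\J_k}(\nu)$ except that it uses $\l_{p(k)}$ where the optimal one uses $\l_j$ (or a suitable coordinate vector), and then exploit $\langle \nu, w'_k \rangle \geqslant \ord_{\J_k}(\nu)$ or an analogous jet-space inequality to extract \eqref{cla}. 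More precisely, I would first note that by Remark \ref{nk} we have $\ell(\l_j) \subset \ell_\nu^s$ for $1 \leqslant j < n(k)$, so $e_{q_{\l_j}}$ is one of the vectors $e_i$ with $\langle \nu, e_i \rangle \leqslant s$; and by Definition \ref{defpk} we have $\langle \nu, \l_{p(k)} \rangle \leqslant s$ (the Remark after Definition \ref{defpk}), and in the case $n(k) > 0$ also $\langle \nu, e_{m(k)} - \l_{n(k)} + \l_{p(k)} \rangle \leqslant s$.

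For \eqref{cla} I would split according to whether $n(k) = 0$ or $n(k) > 0$, mirroring the case division in Definition \ref{defpk}. If $n(k) = 0$: here $p(k) = \max\{ j \mid \langle \nu, \l_j \rangle \leqslant s\}$, and for $1 \leqslant j < p(k)$ the monotonicity \eqref{ch-order2} gives $\langle \nu, \l_j \rangle < \langle \nu, \l_{p(k)} \rangle \leqslant s$. The point is to compare the term $e_{q_{\l_j}}$ contributed by $\l_j$ via its support, using that $\l_j \in \ell_\nu^s$ forces $\langle \nu, e_{q_{\l_j}} \rangle \leqslant \langle \nu, \l_j \rangle \leqslant s$ (the last assertion of Lemma \ref{crucial}); then \eqref{cla} reduces to $\langle \nu, \l_{p(k)} \rangle - \langle \nu, \l_j\rangle \leqslant s - \langle \nu, e_{q_{\l_j}}\rangle$, which follows from $\langle \nu, e_{q_{\l_j}}\rangle \leqslant \langle \nu, \l_j\rangle$ and $\langle \nu, \l_{p(k)}\rangle \leqslant s$ by adding the two. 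If $n(k) > 0$: then $p(k) \in \{n(k)\} \cup \{ n(k) < j < t(k) \mid \langle \nu, e_{m(k)} - \l_{n(k)} + \l_j\rangle \leqslant s\}$, and I would use the bound $\langle \nu, e_{m(k)} - \l_{n(k)} + \l_{p(k)}\rangle \leqslant s$ from Definition \ref{defpk} directly: since $n(k) \leqslant p(k)$, for $1 \leqslant j < p(k)$ either $j < n(k)$ (handled as above, with $\l_j \in \ell_\nu^s$ by Remark \ref{nk}) or $n(k) \leqslant j < p(k)$, in which case monotonicity $\langle \nu, \l_j \rangle < \langle \nu, \l_{p(k)}\rangle$ plus $\langle \nu, e_{m(k)} - \l_{n(k)} + \l_{p(k)}\rangle \leqslant s$ and Remark \ref{op2} ($\langle \nu, \l_{n(k)}\rangle < \langle \nu, e_{m(k)}\rangle$) give what is needed.

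For \eqref{cla2}, assume $n(k) > 0$ and $m(k) \neq 0$. I would argue by the minimality of $w_k$: the vector $w_k = w + \l_{n(k)} + (\text{rest})$ (with $w \in \J_{k-1}$) is chosen with $\langle \nu, w_k\rangle = \ord_{\J_k}(\nu)$, and the competitor $w + \l_j + (\text{rest with } e_{q_{\l_j}} \text{ replaced appropriately})$ — or rather a vector built to land in $\J_{k}$ — must have $\nu$-value at least $\ord_{\J_k}(\nu)$; expanding this inequality isolates $\langle \nu, e_{q_{\l_j}}\rangle \leqslant \langle \nu, e_{m(k)} - \l_{n(k)} + \l_j\rangle$. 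The technical care here is to ensure the competitor vector genuinely lies in $\J_k$, i.e. that replacing a coordinate direction by $e_{q_{\l_j}}$ (which by definition lies in $\ell(\l_j) \subset \ell_\nu^s = \ell_k(\nu)$) keeps the wedge product nonzero; this is where the combinatorial structure of $\ell_k(\nu)$ from Lemma \ref{crucial}, together with Remark \ref{nk} identifying $q_{\l_{n(k)}} = m(k)$, does the work.

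The main obstacle I anticipate is the bookkeeping in the case $n(k) > 0$ of \eqref{cla}, specifically verifying that the competitor vectors one writes down actually belong to $\J_k$ (the linear-independence/wedge condition) after the substitutions involving $\l_{p(k)}$, $\l_j$, $e_{m(k)}$ and $e_{q_{\l_j}}$; one must track which coordinate vectors lie in $\ell_k(\nu)$ and which characteristic exponents have already been "absorbed", using the algorithm of Proposition \ref{PruebaAlgQO} and Lemma \ref{crucial} repeatedly. The inequalities \eqref{ch-order2}, Remark \ref{op2}, Lemma \ref{lema-obs}, and the defining inequalities in Definition \ref{defpk} supply all the numerical content; assembling them into the two displayed inequalities is then a matter of careful case analysis rather than any new idea.
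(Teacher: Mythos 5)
Your argument for (\ref{cla}) hinges on the inequality $\langle\nu,e_{q_{\l_j}}\rangle\leqslant\langle\nu,\l_j\rangle$, which you invoke for $n(k)=0$ and again, via the last assertion of Lemma \ref{crucial}, for $1\leqslant j<n(k)$. That inequality fails in general, and its failure is exactly the hard case of the lemma. Take the paper's Example \ref{ExamAlg} with $\nu=\nu_1=(4,2,8)$ and $k=3$: Proposition \ref{PruebaAlgQO} gives $w_3=e_1+e_2+\l_2$, hence $n(3)=2$, $m(3)=3$, $t(3)=3$, $p(3)=2$, and for $j=1<p(3)$ one has $e_{q_{\l_1}}=e_1$ with $\langle\nu_1,e_1\rangle=4>3=\langle\nu_1,\l_1\rangle$; inequality (\ref{cla}) still holds, but only because $s\geqslant\f_3(\nu_1)=6$, not by adding the two inequalities you propose. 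So the assertion you quote from Lemma \ref{crucial} cannot be taken at face value here (in this example it would assert $4\leqslant 3$); the situation $\langle\nu,\l_j\rangle<\langle\nu,e_{q_{\l_j}}\rangle$ with $j<n(k)$ occurs precisely when the algorithm has made the choice $b$ in (\ref{rithm}) at earlier steps, and the paper's proof of the present lemma is organized around that case. What your sketch lacks is any substitute for the mechanism used there: one follows the algorithm backwards, introduces the characteristic exponents $\l_{r_0},\ldots,\l_{r_h}=\l_{n(k)}$ absorbed at steps $k_0<\cdots<k_h\leqslant k$, records at each step the inequality expressing that choice $b$ beat choice $a$ (inequalities (\ref{ecCrucialc1}), (\ref{ecu-h}), and (\ref{ecCrucialc1primaDos}) when $\ell(\l_{r_0})\varsubsetneq\ell(\l_j)$), and telescopes these together with (\ref{ecu-p}). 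The ingredients you list ((\ref{ch-order2}), Remark \ref{op2}, Lemma \ref{lema-obs}, Definition \ref{defpk}) do not produce these estimates; this is a missing idea, not bookkeeping. (Your $n(k)=0$ case, by contrast, is repairable even though the cited assertion is vacuous there: since $p(k)<t(k)$ by Lemma \ref{lema-obs}, $\ell(\l_j)\subset\ell_k(\nu)=\mathsf{span}_\Q\{e_{j_1^{(k)}},\ldots,e_{j_k^{(k)}}\}$, so replacing $e_{q_{\l_j}}$ by $\l_j$ in $w_k$ stays in $\J_k$ and minimality gives the inequality.)

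On the positive side, your exchange idea for (\ref{cla2}) can be carried out and is shorter than the paper's route: for $j<n(k)$, minimality of $w_k$ first forces $\ell(\l_j)\subset\mathsf{span}_\Q\{e_{j_1^{(k)}},\ldots,e_{j_{k-1}^{(k)}}\}$ (otherwise $e_{j_1^{(k)}}+\cdots+e_{j_{k-1}^{(k)}}+\l_j\in\J_k$ would have smaller $\nu$-value than $w_k$ by (\ref{ch-order2})), so $e_{q_{\l_j}}$ is one of the coordinate vectors in the expansion of $w_k$ and is distinct from $e_{m(k)}$; the element of $\J_k$ obtained from $w_k$ by replacing $e_{q_{\l_j}}$ and $\l_{n(k)}$ with $e_{m(k)}$ and $\l_j$ then yields (\ref{cla2}) directly. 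But note that (\ref{cla2}) together with Definition \ref{defpk} recovers (\ref{cla}) for $j<n(k)$ only when $m(k)\neq 0$ and $p(k)>n(k)$; when $m(k)=0$, or when $p(k)=n(k)$, you are back to needing the telescoping argument above, so this observation does not close the gap.
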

\begin{proof}
Suppose first that $\langle \nu, \l_j \rangle \geqslant
\langle \nu,    e_{q_{\l_j}} \rangle$ for some $1\leqslant j
\leqslant p(k)$.  It follows that (\ref{cla2}) holds by Remark
\ref{op2}. We deduce also that (\ref{cla}) holds since $\langle
\nu, \l_{p(k)} \rangle \leqslant s$ hence  $
   \langle \nu, \l_{p(k)} - \l_j \rangle  \leqslant  s - \langle \nu, \l_j \rangle \leqslant s -
 \langle \nu,    e_{q_{\l_j}} \rangle$.

We deal first with the case $1 \leqslant j < n(k)$. If   $\langle
\nu, \l_j \rangle < \langle \nu, e_{q_{\l_j}} \rangle$ then the
set $\{1\leqslant r\leqslant j\ |\ \l_r=\l_{n(k')}\mbox{ for
}1\leqslant k'\leqslant k\}$ is non empty and $n(k)>0$.  We
introduce the terms $\l_{r_0},\ldots,\l_{r_h}=\l_{n(k)}$, which
are the characteristic exponents appearing in the expansion of
the terms $w_{k_0},\ldots,w_{k_h}$ defined in Proposition
\ref{PruebaAlgQO} by the vector $\nu$ for suitable integers
$k_0,\ldots,k_h$.  First we set
\[ r_0:=\max \{1\leqslant r\leqslant j\ |\ \l_r=\l_{n(k')}\mbox{ for } 1\leqslant k'<k\}, \  k_0:=\min\{1\leqslant k'<k\ |\
\l_{r_0}=\l_{n(k')}\}, \] and then inductively
$
r_i:=\min\{r_{i-1}<  r\leqslant n(k) | \l_r=\l_{n(k')},
\mbox{
 } k_{i-1} < k'\leqslant k \}$ and $
k_i:=\min\{k_{i-1}\leqslant k'\leqslant k |
\l_{r_i}=\l_{n(k')}\}$. After finitely many steps we have an
integer  $h$ such that $k_h \leqslant k$ and  $r_h=n(k)$, and the
process stops.

It is easy to check that $r_0 \leqslant j < r_1$
$\langle\nu,\l_{r_0}\rangle\leqslant\langle\nu,\l_j\rangle<\langle\nu,\l_{r_1}\rangle$.
Notice that $e_{q_{\l_{r_l}}} = e_{m(k_l)} $ for $l = 0 , \dots,
h-1$ and    if ${m(k)} \ne 0$ we have also that $e_{q_{\l_{r_h}}}
= e_{m(k_h)} = e_{m(k)}  $.

Remark that the definition of
$w_{k_1},\ldots,w_{k_h}$ in Proposition \ref{PruebaAlgQO} involves
the choice $b_{k_i}$ in (\ref{rithm}), i.e., we have:
    \begin{equation}
    \phi_{k_{l-1}}(\nu)=\langle\nu,\l_{r_l}-\l_{r_{l-1}}+e_{q_{\l_{r_{l-1}}}}\rangle
<\langle\nu,e_{q_{\l_{r_l}}}\rangle\ \mbox{ for }l=1,\ldots,h-1.
    \label{ecCrucialc1}
    \end{equation}
and similarly by Lemma \ref{c1} and Definition \ref{Ak},
             \begin{equation}
    \phi_{k_{h}}(\nu)=\langle\nu,\l_{r_h}-\l_{r_{h-1}}+e_{q_{\l_{r_{h-1}}}}\rangle
\leqslant
 \left\{ \begin{array}{lcl}
 s    & \mbox{ if } &  {m(k)} = 0,
\\
 \langle \nu,  e_{m(k)}   \rangle    & \mbox{ if } &       {m(k)} \ne 0.
        \end{array}
\right.
    \label{ecu-h}
    \end{equation}

Summing  the inequalities in (\ref{ecCrucialc1}) for
$l=1,\ldots,h-1$ with (\ref{ecu-h}) provides the inequality
\begin{equation}
\langle\nu,\l_{r_h}-\l_{r_0}+e_{q_{\l_{r_0}}}\rangle  \leqslant
\left\{
\begin{array}{lcl}
 s    & \mbox{ if } &  {m(k)} = 0,
\\
 \langle \nu,  e_{m(k)}   \rangle    & \mbox{ if } &       {m(k)} \ne 0.
        \end{array}
\right. \label{ecCrucialc1prima}
\end{equation}

Now we distinguish two cases:

(i) If $\ell(\l_{r_0})=\ell(\l_j)$ then $q_{\l_{r_0}}=q_{\l_j}$ by
definition. Then the inequality (\ref{cla2}) hold
 by (\ref{ecCrucialc1prima}) since
$\langle\nu,\l_{r_0}\rangle\leqslant\langle\nu,\l_j\rangle$.
Adding (\ref{ecCrucialc1prima}) with the inequality
\begin{equation}    \label{ecu-p}
             \langle \nu, \l_{p(k)} -  \l_{n(k)} + e_{m(k) } \rangle \leqslant s
\end{equation}
shows that (\ref{cla}) holds.

(ii) Otherwise $\ell(\l_{r_0})\varsubsetneq\ell(\l_j)$ and we have
that $\l_j\neq\l_{n(k')}$ for $k_0<k'<k$.  The characteristic
exponent $\l_j$ is not chosen at any step of the algorithm  in
Proposition \ref{PruebaAlgQO}, that is,
\begin{equation}
\langle\nu,e_{q_{\l_j}}\rangle<\langle\nu,\l_j-\l_{r_0}+e_{q_{r_0}}\rangle.
\label{ecCrucialc1primaDos}
\end{equation}
Then we check that (\ref{cla2}) holds by adding the  inequalities
(\ref{ecCrucialc1primaDos}) and (\ref{ecCrucialc1prima}). The same
happens for (\ref{cla}) by adding (\ref{ecCrucialc1primaDos}),
(\ref{ecCrucialc1prima})     and (\ref{ecu-p}). It remains to
prove that (\ref{cla}) and  (\ref{cla2}) hold for $n (k) \leqslant
j < p(k)$. In this case $e_{q_{{\l}_j}} = e_{m(k)}$ and the
inequalities hold trivially by the definition of $p(k)$ since
$\langle \nu, \l_{n(k)} \rangle \leqslant  \langle \nu, \l_{j}
\rangle$.
 \end{proof}

\section{The jet space $j_s (H_{S,\nu}^*)$} \label{tor-jetQO}

\begin{Not}        \label{nota}
In this section  we fix $(\nu, s) \in A_k$ for some $0 \leqslant k
\leqslant d$ and we simplify our notations.
 Let $w_k = e_{j_1^{(k)}} + \dots
+ e_{j_k^{(k)}}$ be the vector defined by $\nu$ in Proposition
\ref{PruebaAlgQO}. We relabel the vectors in $\{e_1,\ldots,e_d\}$
in such a way that $w_k=e_1+\cdots+e_k$ if $n(k)=0$, and
$w_k=e_1+\cdots+e_{k-1}+\l_{n(k)}$ if $n(k) >0$. We denote the
integers $n(k), m(k)$ and $p(k)$, defined in the Section
\ref{conv-newtonQO} in terms of $\nu$, $k$ and $s$, simply by
${n}$, ${m}$,  and ${p}$, respectively.
\end{Not}

We begin by recalling  some definitions and results from
\cite{CoGP}. Let  $\{m_1, \dots, m_d\} $ be a basis of the lattice
$M$. We consider  a set of variables  $\{ c(m_1),\dots, c(m_d) \}
\cup \{ u_j(m_i) \}_{i=1,\dots, d}^{j\geqslant 1}$ to  define the
$\C$-algebra
\[ \mathcal{A}_{T_N} := \C[ c(m_1)^{\pm 1}, \dots, c(m_d)^{\pm
1}]\otimes_\C \C[ u_j(m_i)]_{i=1,\dots, d}^{j\geqslant 1}. \]
Since $\{m_1, \dots, m_d\} $ form a basis of $M$ there is a unique
homomorphism of semigroups
\begin{equation} \label{family}
M \rightarrow\mathcal A_{T_N}[[t]]^*
\end{equation} given by
$m_i\mapsto c(m_i)u(m_i)$ where $u(m_i)=1+\sum_{j\geqslant
1}u_j(m_i)t^j$ for $i=1,\ldots,d$. We associate to $w \in M$
the terms $c(w)$ and $u_j(w)$, for
$j\geqslant 1$, in the ring  $\mathcal A_{T_N}$, by  $w \mapsto
c(w) u(w)$ in (\ref{family}),  where $c(w)$ is the constant term
of the series and $u(w)$ is of the form $u(w) = 1+\sum_{j\geqslant
1}u_j(w)t^j$. The following result show some relations among the
elements $u_i ({w}) \in \mathcal A_{T_N}$, when we vary $i$ and
${w} \in M$, in terms of linear dependency relations among the
${w} \in M$.

\begin{Lem} \label{vectorQO}  {\rm (see Lemma 6.2 in \cite{CoGP}).}
Let ${w}_1, \dots, {w}_k$ be  linearly independent vectors in the
lattice $M$ spanning the linear subspace $\ell$ of $M_{\Q}$. For
any $w \in \ell$ and $s \geqslant 1$ the term $u_s ({w})$ belongs
to $ \Q [ u_1( {w}_j ), \dots, u_s ({w}_j ) ]_{j=1}^k.$.
\end{Lem}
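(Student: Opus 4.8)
The plan is to reduce the statement to two elementary facts about formal power series with constant term $1$ over a $\Q$-algebra. First I would pass from integer to rational exponents. Since $w_1,\dots,w_k$ are linearly independent and $w\in\ell$, write $w=\sum_{j=1}^k a_jw_j$ with $a_j\in\Q$, and pick $N\in\Z_{>0}$ with $Na_j\in\Z$ for all $j$, so that $Nw=\sum_j(Na_j)w_j$ holds in $M$. Applying the homomorphism $(\ref{family})$ (which, being a homomorphism of semigroups from the group $M$ into the units $\mathcal A_{T_N}[[t]]^*$, is in fact a group homomorphism) and comparing the unique factorizations ``constant $\times$ series with constant term $1$'', one gets $u(Nw)=\prod_{j=1}^k u(w_j)^{Na_j}$, that is $u(w)^N=\bigl(\prod_{j=1}^k u(w_j)^{a_j}\bigr)^N$ in $(\mathcal A_{T_N}\otimes_\Z\Q)[[t]]$. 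A power series of the form $1+t(\cdots)$ over a $\Q$-algebra has a unique $N$-th root of the same shape: if $(1+th)^N=1$, then the lowest-order term of $h$ would contribute $N$ times itself to $(1+th)^N-1$ and so must vanish, as $N$ is invertible; hence $u(w)=\prod_{j=1}^k u(w_j)^{a_j}$.

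It then suffices to prove a general statement: if $f_1,\dots,f_k\in 1+tR[[t]]$ for a $\Q$-algebra $R$ and $a_1,\dots,a_k\in\Q$, then for each $s\geqslant 1$ the coefficient of $t^s$ in $\prod_{j=1}^k f_j^{a_j}$ is a polynomial with coefficients in $\Q$ in the coefficients of $f_1,\dots,f_k$ of degree $\leqslant s$. I would prove this through the formal logarithm and exponential: set $g_j:=\log f_j=\sum_{i\geqslant1}g_{j,i}t^i$, where each $g_{j,i}$ is a $\Q$-polynomial in the first $i$ coefficients of $f_j$ (the series $\log(1+x)=\sum_{n\geqslant1}(-1)^{n-1}x^n/n$ applied to $x=f_j-1$ involves only coefficients up to the relevant degree). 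Then $\prod_j f_j^{a_j}=\exp\bigl(\sum_j a_jg_j\bigr)$, and since $\exp(y)=\sum_{n\geqslant0}y^n/n!$ does not lower $t$-adic orders, the coefficient of $t^s$ of $\exp\bigl(\sum_j a_jg_j\bigr)$ is a $\Q$-polynomial in the $g_{j,i}$ with $i\leqslant s$, hence a $\Q$-polynomial in the coefficients of $f_1,\dots,f_k$ of degree $\leqslant s$. Taking $f_j=u(w_j)$, so $f_{j,i}=u_i(w_j)$, and reading off the coefficient of $t^s$ in $u(w)=\prod_j u(w_j)^{a_j}$ gives exactly that $u_s(w)\in\Q[u_1(w_j),\dots,u_s(w_j)]_{j=1}^k$.

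A slightly more hands-on variant, closer to a recursion, would use the formal logarithmic derivative $L(w):=u'(w)/u(w)$: differentiating the identity $u(w)u(w')=u(w+w')$ and dividing gives additivity $L(w+w')=L(w)+L(w')$, hence $\Q$-linearity and $L(w)=\sum_j a_jL(w_j)$; each coefficient of $L(w_j)$ is a polynomial in finitely many $u_i(w_j)$, and conversely $u_s(w)$ is recovered from the lower coefficients of $L(w)$ by the recursion $s\,u_s(w)=\sum_{i=0}^{s-1}L_i(w)\,u_{s-1-i}(w)$. Either way, the single point that genuinely requires care — and the one I would flag as the main (though routine) obstacle — is the passage from integer to rational exponents, i.e. the uniqueness of $N$-th roots of series of the form $1+t(\cdots)$ in characteristic zero, which is what legitimizes writing $u(w)=\prod_j u(w_j)^{a_j}$; after that the degree bookkeeping is immediate.
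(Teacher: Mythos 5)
Your proposal is correct. Note that the paper itself gives no proof of this lemma: it is quoted from Lemma 6.2 of \cite{CoGP}, so there is no internal argument to compare against line by line. Your route is the natural one and it is complete: the map (\ref{family}) is a semigroup homomorphism from the group $M$ into a group of units, hence a group homomorphism, and uniqueness of the factorization ``unit constant times series with constant term $1$'' gives $u(Nw)=u(w)^N=\prod_j u(w_j)^{Na_j}$ after clearing denominators; since $\mathcal A_{T_N}$ is a $\C$-algebra (so in particular a $\Q$-algebra, making the tensoring with $\Q$ superfluous but harmless), uniqueness of $N$-th roots in $1+t\mathcal A_{T_N}[[t]]$, or equivalently the formal $\exp$/$\log$ calculus, legitimizes $u(w)=\prod_j u(w_j)^{a_j}$, and the degree bookkeeping then yields $u_s(w)\in\Q[u_1(w_j),\dots,u_s(w_j)]_{j=1}^k$ exactly as claimed. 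The only implicit point worth stating is that $w$ must lie in $\ell\cap M$ for $u_s(w)$ to be defined at all, which is how the statement is meant. Your second, recursion-based variant via the logarithmic derivative (additivity of $L$, division by $N$ to get $\Q$-linearity, and the recursion $s\,u_s(w)=\sum_{i=0}^{s-1}L_i(w)u_{s-1-i}(w)$) is equally valid and arguably closer in spirit to a direct coefficient-comparison proof: one could also simply compare coefficients of $t^s$ in $u(w)^N=\prod_j u(w_j)^{Na_j}$ and solve recursively for $u_s(w)$, using only that $N$ is invertible, without ever defining rational powers.
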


We use in an essential manner the parametrization of the sets
$H^*_{S, \nu}$ by the arc space of the torus $T_N$. We have an
homomorphism of semigroups:
\begin{equation} \label{fam}
(\s^\vee \cap M, +) \rightarrow (\mathcal{A}_{T_N}[[t]], \cdot)
\mbox{ given by  } m   \mapsto t^{\langle\nu,m\rangle}c(m)u(m)
\mbox{, for } m\in\s^\vee\cap M.
\end{equation}
It defines a  parametrization of $H_{\bar{S}, \nu}^*$ by the arc
space of the torus $T_N$. Recall that by Lemma \ref{normal} we
have that the analytic algebra  of the germ $(S,0)$ is $\mathcal
O_S=\C\{\s^\vee \cap M_0 \} [\z]$. The maximal ideal of $\mathcal
O_S$ is $(x_1, \dots, x_{d+1})$  where $x_i:= X^{e_i}$ for
$i=1,\ldots,d$ and $x_{d+1}:=\z$ (see the notations of Section \ref{secQO}). The restriction of the homomorphism
$\C \{ \s^\vee \cap M \} \rightarrow \mathcal A_{T_N}[[t]]$ defined by
(\ref{fam}) to the local algebra  $\mathcal{O}_S$ parametrizes the
set $H_{S, \nu}$. This homomorphism verifies that
\begin{equation}
x_i \mapsto  t^{\langle \nu, e_i\rangle}  c(e_i) u (x_i),
\end{equation}
where $u(x_i)$ is a series of the form $u (x_i ) = 1 + \sum_{j
\geqslant 1} u_j (x_i)$. For $1 \leqslant i \leqslant d$ we have
that $c(e_i) u(x_i)$ is the image of $e_i$ by  the map
(\ref{family}), in particular $u(x_i) = u (e_i)$. We use the
expansion (\ref{expan}) of $x_{d+1} = \sum \b_\l X^\l$ as a power
series in $\C \{ \s^\vee \cap M \}$ to describe expansions of the
terms $u_r (x_{d+1})$ in terms of $u_l (e_i)$, for $i= 1,\dots,
d+g$ and $1\leqslant l \leqslant r$.

\begin{Not}
For $1\leqslant j\leqslant g$ we denote by $C_j$ the $\C$-algebra
of $\mathcal A_{T_N}$ generated by $c(m)$ for $m \in M_j$. We
denote $C_{g}$ simply by $C$. \label{defCj}
\end{Not}

\begin{Not}\label{notr_j}
If $\nu \in   \stackrel\circ\s\cap N$ and $ r \geqslant 1$ are
fixed
 we set the
sequence $r_1,\ldots,r_g$ by:
\begin{equation}
r_1 =r,\ r_2=r_1-\langle\nu,\l_2-\l_1\rangle,\ldots, r_g =r_1-\langle\nu,\l_g-\l_1\rangle.
\label{rsequence}
\end{equation}

\end{Not}

\begin{remark} \label{salva}
Notice that if $ r\leqslant s-\langle\nu,\l_1\rangle$ then we have
that
 $r_j\leqslant s-\langle\nu,\l_j\rangle$ for every $1\leqslant j\leqslant g$.
\end{remark}

\begin{Pro} \label{ur}
Let   $r\geqslant 1$  be an integer. In $\mathcal A_{T_N}$ we have the expansion:
\begin{equation}\label{zc2}
u_r(x_{d+1})=u_r(\l_1)+\sum_{l+\langle\nu,\l-\l_1\rangle=r}^{\l\geqslant\l_1}\theta(\l)u_l(\l),
\mbox{ with } \theta(\l):=\beta_\l
\beta_{\l_1}^{-1}c(\l)c(\l_1)^{-1}.
\end{equation}
 We use Notation \ref{notr_j}. If $1\leqslant j \leqslant g$ and  $r_j\geqslant 0$ then we
 set
\begin{equation}
\alpha_{r_j}^j
:=\theta(\l_j)u_{r_j}(\l_j)+\sum_{\langle\nu,\l-\l_j\rangle+l_j=r_j}^{\langle\nu,\l_j\rangle<\langle\nu,\l\rangle\leqslant\langle\nu,\l_{j+1}\rangle,\l\neq\l_{j+1}}\theta(\l)u_{l_j}(\l),
\label{alfa-r-j}
\end{equation}
The following properties hold:
\begin{enumerate}
\item[(i)]
 $u_r(x_{d+1})= \sum_{j=1, \dots, g}^{r_j \geqslant 0} \alpha_{r_j}^j$.
\item[(ii)] The coefficient $\theta(\l)$ of a term $u_l(\l)$
    in $\alpha_{r_j}^j$ belongs to $C_j$.
\item[(iii)] If the term $u_l(\l)$ appears in
    $\alpha_{r_j}^j$ then $1\leqslant l<r_j$ unless $\l=\l_j$
    and $l=r_j$, for $j=1,\ldots,g$.
\end{enumerate}
\label{Prop7-1}
\end{Pro}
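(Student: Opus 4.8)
The plan is to establish the expansion in~\eqref{zc2} first, and then derive the refined statements~(i)--(iii) by grouping the terms according to which ``level'' $M_j$ of the lattice filtration contributes them. First I would recall that $x_{d+1} = \z = \sum_{\l\geqslant\l_1}\b_\l X^\l$ is parametrized, via~\eqref{fam}, by
\[
x_{d+1} \mapsto \sum_{\l\geqslant\l_1} \b_\l\, t^{\langle\nu,\l\rangle} c(\l) u(\l)
= t^{\langle\nu,\l_1\rangle}\b_{\l_1} c(\l_1) \Bigl( u(\l_1) + \sum_{\l>\l_1} \tfrac{\b_\l}{\b_{\l_1}}\tfrac{c(\l)}{c(\l_1)} t^{\langle\nu,\l-\l_1\rangle} u(\l) \Bigr),
\]
where I use that $\langle\nu,\l\rangle>\langle\nu,\l_1\rangle$ for $\l>\l_1$ because $\nu\in\stackrel{\circ}{\s}\cap N$ and all exponents $\l$ appearing lie above $\l_1$ coordinate-wise. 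On the other hand, by definition of the homomorphism~\eqref{family}, $x_{d+1}\mapsto t^{\langle\nu,\l_1\rangle}c(\l_1) u(x_{d+1})$, where $c(x_{d+1})=\b_{\l_1}c(\l_1)$ absorbs the leading coefficient; hence $u(x_{d+1}) = u(\l_1) + \sum_{\l>\l_1}\theta(\l)\, t^{\langle\nu,\l-\l_1\rangle} u(\l)$ with $\theta(\l)$ as in~\eqref{zc2}. Reading off the coefficient of $t^r$ and recalling $u(\l)=1+\sum_{l\geqslant 1}u_l(\l)t^l$ gives precisely~\eqref{zc2}: the $t^r$-coefficient of $u(\l_1)$ is $u_r(\l_1)$, and for each $\l>\l_1$ the term $\theta(\l)t^{\langle\nu,\l-\l_1\rangle}u(\l)$ contributes $\theta(\l)u_l(\l)$ whenever $l+\langle\nu,\l-\l_1\rangle=r$ with $l\geqslant 1$ (the $l=0$ constant term of $u(\l)$ would need $\langle\nu,\l-\l_1\rangle=r$, which I should handle: the constant $1$ of $u(\l)$ contributes $\theta(\l)$ itself, so strictly the formula should be read with the convention that $u_0(\l)=1$, matching the paper's indexing).

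Next I would prove~(i). Partition the index set $\{\l\geqslant\l_1\}$ according to the unique $j\in\{1,\dots,g\}$ with $\langle\nu,\l_j\rangle<\langle\nu,\l\rangle\leqslant\langle\nu,\l_{j+1}\rangle$ and $\l\neq\l_{j+1}$, together with the ``boundary'' case $\l=\l_j$; this is a genuine partition by the strict chain of inequalities~\eqref{ch-order2} and Lemma~\ref{expo}(i), which forces every nonzero-coefficient $\l$ to satisfy $\l_j\leqslant\l$ with $\l_j$ the largest characteristic exponent below it, hence $\langle\nu,\l\rangle$ falls in exactly one such half-open interval. For a $\l$ in the $j$-th block, the condition $l+\langle\nu,\l-\l_1\rangle=r$ rewrites as $l_j+\langle\nu,\l-\l_j\rangle=r_j$ using $r_j=r-\langle\nu,\l_j-\l_1\rangle$ from~\eqref{rsequence}; collecting these terms is exactly the definition of $\alpha_{r_j}^j$ in~\eqref{alfa-r-j}, with the $\l=\l_j$ contribution $\theta(\l_j)u_{r_j}(\l_j)$ split off. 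The constraint $r_j\geqslant 0$ is exactly the condition for the $j$-th block to contribute anything. Summing over $j$ recovers the right-hand side of~\eqref{zc2}, proving~(i).

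For~(ii), I would argue that if $u_l(\l)$ appears in $\alpha_{r_j}^j$ then $\l$ lies in the $j$-th block, so by Lemma~\ref{expo}(i) we have $\l\in M_j$, whence $c(\l)\in C_j$ and also $c(\l_1)^{-1}\in C_1\subseteq C_j$ and $\b_\l\b_{\l_1}^{-1}\in\C$; therefore $\theta(\l)=\b_\l\b_{\l_1}^{-1}c(\l)c(\l_1)^{-1}\in C_j$. Part~(iii) is a bookkeeping check on the indices: in $\alpha_{r_j}^j$ a term $u_{l_j}(\l)$ satisfies $l_j=r_j-\langle\nu,\l-\l_j\rangle$, and since $\langle\nu,\l-\l_j\rangle>0$ when $\l\neq\l_j$ (again because $\nu\in\stackrel{\circ}{\s}$ and $\l>\l_j$ coordinate-wise, as $\l$ lies strictly above $\l_j$ in the block) we get $l_j<r_j$; the sole exception is $\l=\l_j$, for which $l_j=r_j$, exactly as stated. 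The main obstacle I anticipate is bookkeeping rather than conceptual: being careful that the half-open-interval partition of exponents is genuinely a partition (using that each $\l$ with $\b_\l\neq 0$ has a well-defined ``level'' by Lemma~\ref{expo}(i) and that $\nu$ is in the interior so the pairings are strictly monotone along the chain), and that the boundary terms $\l=\l_j$ and $\l=\l_{j+1}$ are assigned to the correct blocks without double-counting. Once that is pinned down, (i)--(iii) follow by unwinding definitions.
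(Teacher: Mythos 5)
Your proposal is correct and follows essentially the same route as the paper: compare the image of $x_{d+1}=\sum_{\l\geqslant\l_1}\b_\l X^\l$ under the parametrization with the definition of $u(x_{d+1})$, extract the coefficient of $t^r$ to get (\ref{zc2}), and then regroup the terms into the blocks $\alpha_{r_j}^j$ according to the sequence $r_j$ of (\ref{rsequence}), which is exactly what the paper does (only more tersely). One small inaccuracy that does not affect the result: in your argument for (iii) you justify $\langle\nu,\l-\l_j\rangle>0$ by saying $\l$ lies coordinate-wise strictly above $\l_j$, but block membership is defined by the inequalities $\langle\nu,\l_j\rangle<\langle\nu,\l\rangle\leqslant\langle\nu,\l_{j+1}\rangle$ rather than by coordinate-wise domination (and similarly, for (ii) one should note that these inequalities, together with Lemma \ref{expo}(i) and $\nu\in\stackrel{\circ}{\s}$, force the coordinate-wise level of $\l$ to be at most $j$, hence $\l\in M_j$); in both cases the needed conclusion follows directly from the defining inequality of the block.
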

\begin{proof}We deduce that
\begin{equation}
u(x_{d+1}) =
 \sum_{\l \geqslant \l_1}  \,  \beta_{\l}\beta_{\l_1}^{-1}  \,    {c}( \l)c(\l_1 )^{-1} \,    t^
{\langle \nu, \l - \l_1 \rangle} ( \sum_{l \geqslant 0} u_l (\l)
t^l ) . \label{exp(d+1)}
\end{equation}
by comparing the definition of $u(x_{d+1})$ with the expansion
(\ref{expan}) of $x_{d+1}$ in $\C \{ \s^\vee \cap M \}$. The
equality   follows by collecting the terms in $t^r$ in
(\ref{exp(d+1)}). The sum in (\ref{zc2}) is finite because there
is a finite number of lattice points $\l$ verifying that
$\l\geqslant\l_1$ and
$\langle\nu,\l\rangle\leqslant\langle\nu,\l_1\rangle+r$, since
$\nu\in\stackrel\circ\s\cap M$. Notice that $u_0(\l)=1$ by
definition and the term obtained for $l=0$ (resp. $l=r$) in the
sum (\ref{zc2}) is
$\sum_{\langle\nu,\l-\l_1\rangle=r}\beta_\l\beta_{\l_1}^{-1}c(\l)c(\l_1)^{-1}$
(resp. $u_r(\l_1)$).
 The other assertions  are obtained by collecting the indices in the sum (\ref{zc2}) according to
 (\ref{rsequence}). \end{proof}

\begin{remark}
For simplicity we convey
 that the term $\alpha_{r_j}^j$ equals zero whenever $r_j<0$.
 Hence we can write
 $u_r (x_{d+1} ) = \sum_{j =1}^g \alpha_{r_j}^j$ for any $r \geqslant 1$
 \end{remark}

\begin{Pro} \label{alg-indQO}  {\rm (see Proposition 6.3 in \cite{CoGP}).}
If the vectors $ {w}_1, \dots, {w}_k$ in $M$ are linearly
independent  then the terms, $ u_i({w}_1), \dots, u_i({w}_k)$ for
$ i \geqslant 1$, are algebraically independent over the field of
fractions of  $C$, and in addition if $k =d$ they  generate
$\mathcal{A}_{T_N}$ as a $C$-algebra.
\end{Pro}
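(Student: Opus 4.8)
The plan is to reduce the whole statement to a transcendence‑degree count in a polynomial ring, using that the quantities $u_j(w)$ are insensitive to the variables $c(m_i)$. First I would record the elementary observation that, writing $w=\sum_i a_i m_i$ with $a_i\in\Z$, the semigroup homomorphism (\ref{family}) forces $u(w)=\prod_i u(m_i)^{a_i}$ in $\mathcal{A}_{T_N}[[t]]$; since each $u(m_i)$ has constant term $1$, comparing coefficients of $t^j$ gives $u_j(w)\in\Z[\,u_l(m_i)\mid 1\leqslant l\leqslant j,\ 1\leqslant i\leqslant d\,]$, with no denominators and, crucially, no occurrence of the $c(m_i)$. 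Hence the whole question takes place inside the polynomial ring $R:=\mathrm{Frac}(C)[\,u_j(m_i)\mid j\geqslant 1,\ 1\leqslant i\leqslant d\,]$, which is the localization $\mathrm{Frac}(C)\otimes_C\mathcal{A}_{T_N}$ of the domain $\mathcal{A}_{T_N}$; ``algebraically independent over $\mathrm{Frac}(C)$'' means here: as elements of $R$.

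Next I would reduce to the case $k=d$. Given linearly independent $w_1,\dots,w_k\in M$ with $k<d$, complete them to a family $w_1,\dots,w_d\in M$ that is linearly independent over $\Q$ (for instance by adjoining suitable $m_i$'s); then $w_1,\dots,w_d$ span a finite‑index sublattice of $M$, so $\mathsf{span}_\Q\{w_1,\dots,w_d\}=M_\Q$. Since a subfamily of an algebraically independent family is again algebraically independent, it suffices to prove that $\{u_i(w_\ell)\mid i\geqslant 1,\ 1\leqslant\ell\leqslant d\}$ is algebraically independent over $\mathrm{Frac}(C)$ and, when the original $k$ equals $d$, that it generates $\mathcal{A}_{T_N}$ over $C$.

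So assume $w_1,\dots,w_d\in M$ are linearly independent over $\Q$. By Lemma \ref{vectorQO}, applied with $\ell=\mathsf{span}_\Q\{w_1,\dots,w_d\}=M_\Q$ (which contains every $m_i$), each $u_s(m_i)$ lies in $\Q[\,u_l(w_\ell)\mid 1\leqslant l\leqslant s,\ 1\leqslant\ell\leqslant d\,]$; combined with the opposite inclusion from the first paragraph, this gives, for every truncation level $r\geqslant 1$, the equality of subrings of $R$
\[
R_r\ :=\ \mathrm{Frac}(C)[\,u_l(m_i)\mid 1\leqslant l\leqslant r,\ 1\leqslant i\leqslant d\,]
\ =\ \mathrm{Frac}(C)[\,u_l(w_\ell)\mid 1\leqslant l\leqslant r,\ 1\leqslant\ell\leqslant d\,].
\]
Now $R_r$ is a polynomial ring in $rd$ indeterminates over the field $\mathrm{Frac}(C)$, so $\mathrm{trdeg}_{\mathrm{Frac}(C)}R_r=rd$; being generated over $\mathrm{Frac}(C)$ by the $rd$ elements $u_l(w_\ell)$ ($1\leqslant l\leqslant r$, $1\leqslant\ell\leqslant d$), those elements must be algebraically independent over $\mathrm{Frac}(C)$. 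Letting $r\to\infty$ (any algebraic relation involves finitely many terms, hence lies in some $R_r$) yields the algebraic independence of $\{u_i(w_\ell)\mid i\geqslant 1,\ 1\leqslant\ell\leqslant d\}$, and a fortiori of the original subfamily with $\ell\leqslant k$. For the second assertion, since $\{m_1,\dots,m_d\}$ is a basis of $M$ we have $C=\C[c(m_i)^{\pm 1}]$, hence $\mathcal{A}_{T_N}=C[\,u_j(m_i)\mid j\geqslant 1,\ i\,]$; reading the displayed equality over $C$ instead of $\mathrm{Frac}(C)$ and taking the union over $r$ shows $C[\,u_j(w_\ell)\mid j\geqslant 1,\ 1\leqslant\ell\leqslant d\,]=\mathcal{A}_{T_N}$.

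The only genuinely delicate point is hidden inside the invocation of Lemma \ref{vectorQO}: expressing $u_s(m_i)$ through the $u_l(w_\ell)$ ultimately requires extracting a well‑defined $N$‑th root of a power series with constant term $1$, where $N$ is the index of the sublattice generated by $w_1,\dots,w_d$, and this is precisely where the characteristic‑zero hypothesis enters; the rest of the argument is bookkeeping with $t$‑adic truncations and a dimension count.
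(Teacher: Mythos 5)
Your proof is correct: the identity $u(w)=\prod_i u(m_i)^{a_i}$ (so that $u_j(w)\in\Z[u_l(m_i)\mid l\leqslant j]$ with no $c$'s), the completion to a full-rank family, the use of Lemma \ref{vectorQO} to get $\mathrm{Frac}(C)[u_l(m_i)\mid l\leqslant r, i]=\mathrm{Frac}(C)[u_l(w_\ell)\mid l\leqslant r,\ell]$, the transcendence-degree count in that polynomial ring, and the observation that the coefficients furnished by Lemma \ref{vectorQO} are rational (hence the $k=d$ generation statement already holds over $C$) are all sound. Note that the paper itself does not prove this proposition but quotes it from Proposition 6.3 of \cite{CoGP}; your argument is the natural one along those same lines (extraction of roots of series with constant term $1$, i.e.\ Lemma \ref{vectorQO}, plus a dimension count), so there is no substantive divergence to report.
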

\begin{Pro} \label{atn}
Let us denote by $B_0$ the $\C$-algebra \[ B_0 := \C [c(x_i), u_1
(x_i), u_2( x_i), u_3 (x_i), \dots ]_{i=1, \dots d+1}. \] We set
$F_j := \{ c( m ) \mid m \in \s^\vee\cap M_j \}$ for $j =1, \dots,
g$. Suppose that for $1 \leqslant j \leqslant g$ the algebras
$B_0, \dots, B_{j-1}$ have been defined by induction. Then we have
that $F_j$ is a multiplicative subset of $B_{j-1}$ and the
localization $B_j:= B_{j-1} [F_j^{-1}]$ is a subalgebra of
$\mathcal{A}_{T_N}$ which is equal to $\mathcal{A}_{T_N}$ if $j
=g$.
\end{Pro}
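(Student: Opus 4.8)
The plan is to argue by induction on $j$, carrying along the auxiliary assertion that $B_j$ is a subalgebra of $\mathcal{A}_{T_N}$. For the base case I would check that every generator of $B_0$ already lies in $\mathcal{A}_{T_N}$: for $1\leqslant i\leqslant d$ one has $c(x_i)=c(e_i)$ and $u_l(x_i)=u_l(e_i)$, the latter lying in $\mathcal{A}_{T_N}$ by Lemma \ref{vectorQO}; and for $i=d+1$ one has $c(x_{d+1})=\b_{\l_1}c(\l_1)$, while by Proposition \ref{ur} every $u_r(x_{d+1})$ is a polynomial in the Laurent monomials $c(\l)c(\l_1)^{-1}$, the scalars $\b_\l$, and the elements $u_l(\l)\in\mathcal{A}_{T_N}$. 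Hence $B_0\subseteq\mathcal{A}_{T_N}$.

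Now suppose $B_{j-1}\subseteq\mathcal{A}_{T_N}$ for some $1\leqslant j\leqslant g$; the heart of the argument is the inclusion $F_j\subseteq B_{j-1}$. First I would note that, by the induction hypothesis, $F_{j-1}$ and all its inverses already lie in $B_{j-1}$, so --- since $\s^\vee\cap M_{j-1}$ generates $M_{j-1}$ as a group --- we have $c(m)^{\pm1}\in B_{j-1}$ for every $m\in M_{j-1}$, and by Lemma \ref{vectorQO} applied to the basis $e_1,\dots,e_d$ of $M_0$ every $u_l(\l)$ with $\l\in M_\Q$ lies in $B_0\subseteq B_{j-1}$. Next I would isolate the monomial $c(\l_j)$ inside $B_{j-1}$: for $j=1$ this is immediate from $c(x_{d+1})=\b_{\l_1}c(\l_1)$, and for $j\geqslant 2$ I would evaluate Proposition \ref{ur} at the $t$-degree $r:=\langle\nu,\l_j-\l_1\rangle$. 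Then, with $r_{j'}$ as in (\ref{rsequence}), one has $r_j=0$ and $r_{j'}=\langle\nu,\l_j-\l_{j'}\rangle$, so by (\ref{ch-order2}) the terms $\alpha_{r_{j'}}^{j'}$ with $j'>j$ vanish, the term $\alpha_{r_j}^{j}$ collapses --- for degree reasons, using $u_0(\l_j)=1$ --- to $\theta(\l_j)=\b_{\l_j}\b_{\l_1}^{-1}c(\l_j)c(\l_1)^{-1}$, and by part (ii) of Proposition \ref{ur} each remaining $\alpha_{r_{j'}}^{j'}$, $j'<j$, is a combination of the elements $u_l(\l)\in B_{j-1}$ with coefficients in $C_{j'}\subseteq B_{j-1}$ (since $M_{j'}\subseteq M_{j-1}$). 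As $u_r(x_{d+1})$ is one of the defining generators of $B_0\subseteq B_{j-1}$ and $c(\l_1)^{\pm1}\in B_{j-1}$, this forces $c(\l_j)\in B_{j-1}$. Finally, from $M_j=M_{j-1}+\Z\l_j$ with $n_j\l_j\in M_{j-1}$, every $m\in\s^\vee\cap M_j$ can be written $m=q+a\l_j$ with $q\in M_{j-1}$ and $0\leqslant a<n_j$, so $c(m)=c(q)c(\l_j)^{a}\in B_{j-1}$; this proves $F_j\subseteq B_{j-1}$. Consequently $F_j$ is a multiplicative subset of $B_{j-1}$ (it contains $c(0)=1$ and is closed under products), and since each of its elements is a unit of $\mathcal{A}_{T_N}$ --- a nonzero scalar times a Laurent monomial --- the localization $B_j=B_{j-1}[F_j^{-1}]$ is again a subalgebra of $\mathcal{A}_{T_N}$.

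It remains to treat $j=g$, where one has to prove the reverse inclusion $\mathcal{A}_{T_N}\subseteq B_g$. Here I would invoke Proposition \ref{alg-indQO} for the linearly independent $d$-tuple $e_1,\dots,e_d$ of $M$: it gives $\mathcal{A}_{T_N}=C[\,u_i(e_l)\,]_{l=1,\dots,d}^{i\geqslant 1}$ with $C=C_g$. Since $u_i(e_l)=u_i(x_l)\in B_0\subseteq B_g$, and since $\s^\vee\cap M$ generates $M$ as a group while $F_g$ together with its inverses lies in $B_g$ (so that $C\subseteq B_g$), we conclude $\mathcal{A}_{T_N}\subseteq B_g$, hence $B_g=\mathcal{A}_{T_N}$.

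The step I expect to be the main obstacle is the extraction of $c(\l_j)$ inside $B_{j-1}$: one has to choose the $t$-degree in Proposition \ref{ur} so that precisely the $\theta(\l_j)$-contribution survives, modulo terms that parts (ii)--(iii) of that proposition and Lemma \ref{vectorQO} place in $B_{j-1}$, and then carry out the elementary but unavoidable bookkeeping comparing the semigroup generators of $\s^\vee\cap M_j$ with those of $\s^\vee\cap M_{j-1}$ and the exponent $\l_j$.
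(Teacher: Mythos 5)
Your overall strategy is the same as the paper's: induct on $j$, evaluate Proposition \ref{ur} at the $t$-degree $r=\langle\nu,\l_j-\l_1\rangle$ so that $r_j=0$ and the term $\a^j_{r_j}$ collapses to $\theta(\l_j)$, conclude $c(\l_j)\in B_{j-1}$, and finish the case $j=g$ with Proposition \ref{alg-indQO}. Your bookkeeping of the extraction step is in fact more careful than the published proof (your choice of $r$ is the consistent one --- the paper writes $r=\langle\nu,\l_j\rangle$ at the general step, which only gives $r_j=0$ if read as $\langle\nu,\l_j-\l_1\rangle$), and your observation that every $u_l(\l)$, $\l\in M_\Q$, already lies in $B_0$ by Lemma \ref{vectorQO} applied to $e_1,\dots,e_d$ is correct and carries the weight you put on it.

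The one step that fails as written is the passage from $c(\l_j)\in B_{j-1}$ to $F_j\subseteq B_{j-1}$ via $c(m)=c(q)\,c(\l_j)^a$ with $q\in M_{j-1}$, $0\leqslant a<n_j$: you need $c(q)\in B_{j-1}$ for an arbitrary, not necessarily effective, $q\in M_{j-1}$, and your justification (``$F_{j-1}$ and its inverses lie in $B_{j-1}$, and $\s^\vee\cap M_{j-1}$ generates $M_{j-1}$ as a group'') bottoms out at $j=1$, where it would require $c(e_i)^{-1}\in B_0$; but $B_0$ contains no inverses. In fact $F_1\subseteq B_0$ is false in general: for the cusp ($d=1$, $\z=x^{3/2}$, so $M_1=\frac{1}{2}\Z$), grading $\mathcal{A}_{T_N}$ by the degree in $c(m_1)$ with $m_1=\frac{1}{2}$, the algebra $B_0$ is generated in degrees $0,2,3$, hence cannot contain $c(\frac{1}{2})\in F_1$. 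So no argument can close this particular gap: it is a flaw of the literal statement, and the paper's own proof is equally loose at exactly this point (``it follows that $F_1\subset B_0$''). The intended content survives with a one-step shift, which is how I would repair your write-up: every element of $F_1$ is a unit of $\mathcal{A}_{T_N}$ (a nonzero scalar times a Laurent monomial in the $c(m_i)$), so $B_1:=B_0[F_1^{-1}]$ is still a well-defined subalgebra of $\mathcal{A}_{T_N}$; it contains $c(e_i)^{\pm1}$ and $c(\l_1)^{\pm1}$, hence all of $C_1$, and from there your induction works verbatim for $j\geqslant 2$ ($c(\l_j)$ extracted as you do, then $F_j\subseteq C_{j-1}[c(\l_j)]\subseteq B_{j-1}$), as does the final identification $B_g=\mathcal{A}_{T_N}$ via Proposition \ref{alg-indQO}. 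Note also that in the truncated analogue actually used later (Proposition \ref{atn2}) the sets $\bar F_j$ are defined only through $c(e_1),\dots,c(e_{d+j})$, and for those your extraction argument is exactly what is needed, with no effectivity issue.
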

\begin{proof} Notice that $\C[F_j][F_j^{-1}]=C_j$. By definition we have
that $c(x_i) = c(e_i)$ for $i= 1,\dots, d+1$ hence it follows that
$F_1 \subset B_0$ is a multiplicative subset. Then we apply
Proposition \ref{ur} for $r = \langle \nu, \l_2 - \l_1 \rangle$.
We deduce that $u_r (x_{d+1}) = \a_{r_1} + \theta ( \l_2)$ where
$\a_{r_1}$ belongs to $B_1$. It follows that $c(\l_2) = \b_{\l_1}
\b_{\l_2}^{-1} c (\l_1) \theta (\l_2)$ belongs to $B_1$ and this
implies that $F_2$ is a multiplicative subset of $B_1$. Suppose
that the assertion is true for $1 \leqslant j-1 \leqslant g$. Then
we apply Proposition \ref{ur} for $r = \langle \nu, \l_j \rangle$
and we deduce similarly that $F_{j}$ is a multiplicative subset of
$B_{j-1}$. Finally, if $j =g$ we deduce that $B_g$ is a
$C$-subalgebra of $\mathcal{A}_{T_N}$ (see Notation \ref{defCj})
which contains the set $\{ u_1(e_i), u_2 (e_i), u_3 (e_i) \dots
\}_{i=1, \dots, d}$. This set generate
 $\mathcal{A}_{T_N}$ as a  $C$-algebra by
Proposition \ref{alg-indQO}, since the vectors $e_1, \dots, e_d$
are linearly independent.
\end{proof}


The idea of the proof of Theorem \ref{keyQO} is rather similar to
that of Proposition \ref{atn}, though we have to precise the form
of the terms $u_r (x_i)$ which remains when taking a $s$-jet. This
is controlled by the following Proposition:

\begin{Pro} \label{ur2}
Suppose that $(\nu, s) \in A_k$. We consider an integer $r$  such
that
\[
1 \leqslant r \leqslant \left\{\begin{array}{lcl} s -  \langle
\nu, \l_{1} \rangle  & \mbox{ if }  &  {n} = 0
\\
s -  \langle \nu, e_{{m} }  - \l_{{n}}  + \l_{1} \rangle    & \mbox{ if }  &
{n} > 0 \mbox{ and } {{m}} \ne  0
\\
s - \langle \nu, \l_{{n}} - \l_1 \rangle   & \mbox{ if }  &   {n} > 0 \mbox{
and } {{m}} =  0
\end{array}\right.
\]
The terms   $ \a_{j}^{r_{j}}$
in
 the expansion $u_r (x_{d+1}) = \sum_{j=1}^g
\a_j^{r_j}$ vanish for ${p} <  j \leqslant g$.  If in addition,
the term $u_l (\l)$ appears with non zero coefficient in the
expansion of $\a_i ^{r_i}$, for some $1 \leqslant i \leqslant
{p}$, then
\begin{equation} \label{ur3}
\l \in \ell_\nu^s \cap M_i \mbox{ and } l \leqslant s - \langle
\nu, e_{q_{\l}} \rangle.
\end{equation}
\end{Pro}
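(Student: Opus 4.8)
The plan is to argue by a double induction mirroring the structure of Proposition \ref{atn}, but keeping track of the precise $t$-adic truncation bounds. First I would recall from Proposition \ref{ur} that $u_r(x_{d+1}) = \sum_{j=1}^g \alpha_{r_j}^j$, where $\alpha_{r_j}^j$ is given by \eqref{alfa-r-j} and involves terms $u_{l_j}(\l)$ with $\langle \nu, \l_j \rangle < \langle \nu, \l \rangle \le \langle \nu, \l_{j+1} \rangle$ and $\langle \nu, \l - \l_j \rangle + l_j = r_j$. The first step is to show $\alpha_{r_j}^j = 0$ for $p < j \le g$, which amounts to showing $r_j < 0$ for such $j$: by Notation \ref{notr_j} we have $r_j = r - \langle \nu, \l_j - \l_1 \rangle$, so I need $r < \langle \nu, \l_j - \l_1 \rangle$ for $j > p$. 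Here I would split into the three cases for $n = n(k)$ given in the hypothesis on $r$ and invoke the definition of $p(k)$ (Definition \ref{defpk}) together with Lemma \ref{lema-obs}: in the case $n = 0$, $r \le s - \langle \nu, \l_1 \rangle$ and $\langle \nu, \l_{p+1} \rangle > s$ by maximality of $p$; in the cases $n > 0$ the bound on $r$ is designed precisely so that $r + \langle \nu, \l_1 \rangle < \langle \nu, \l_{p+1} \rangle$, using \eqref{cla}–\eqref{cla2} of Lemma \ref{ap3Crucial} and Remark \ref{op2} to absorb the extra terms $e_m - \l_n$.

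For the second assertion, I would fix $1 \le i \le p$ and a term $u_l(\l)$ appearing with nonzero coefficient in $\alpha_{r_i}^i$. The containment $\l \in M_i$ is immediate from \eqref{alfa-r-j}, since the range of summation forces $\langle \nu, \l \rangle \le \langle \nu, \l_{i+1} \rangle$, hence $\l \in M_i$ by Lemma \ref{expo}(i) (the exponents of $\z$ with value at most $\langle \nu, \l_{i+1}\rangle$ lie in $M_i$). To get $\l \in \ell_\nu^s$, I would use that $\l$ is an exponent appearing in the Puiseux expansion of $\z$ with $\l \ge \l_1$ coordinate-wise, write $\l = \sum \l_m e_m$, and argue each coordinate direction $e_m$ with $\l_m \ne 0$ satisfies $\langle \nu, e_m \rangle \le s$: this follows from Lemma \ref{crucial} (its last assertion handles $e_m$ in the support of $\l_j$ for $j < n(k)$) combined with the fact that $\l \ge \l_1$ forces $\ell(\l_1) \subseteq \ell(\l)$, and the definition of $\ell_\nu^s$ in Definition \ref{ele}. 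The bound $l \le s - \langle \nu, e_{q_\l} \rangle$ is then the heart of the matter: from \eqref{alfa-r-j}, $l = l_i = r_i - \langle \nu, \l - \l_i \rangle$, and I must show $r_i \le s - \langle \nu, e_{q_\l} \rangle + \langle \nu, \l - \l_i \rangle$, i.e.\ $r_i + \langle \nu, \l_i \rangle \le s - \langle \nu, e_{q_\l} \rangle + \langle \nu, \l \rangle$. Since $r_i + \langle \nu, \l_i - \l_1 \rangle = r$ and $\langle \nu, \l \rangle \ge \langle \nu, \l_i \rangle$, it suffices to bound $r + \langle \nu, \l_1 \rangle + \langle \nu, e_{q_\l} \rangle \le s + \langle \nu, \l_i \rangle \le s + \langle \nu, \l \rangle$; I would again dispatch this through the three cases on $n$, applying Lemma \ref{ap3Crucial} with $j = i$ (noting $i \le p(k)$) to control $\langle \nu, \l_{p(k)} - \l_i \rangle$ and hence, via the definition of $p(k)$, the size of $r$.

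The main obstacle I expect is the last inequality $l \le s - \langle \nu, e_{q_\l} \rangle$ in the case $n(k) > 0$ and $m(k) \ne 0$, where the bookkeeping involves the exponent $e_{m(k)} - \l_{n(k)}$ and one must carefully relate $e_{q_\l}$ (the $\le_\nu$-largest coordinate axis in the support of $\l$) to $e_{m(k)}$ and to the chain of characteristic exponents $\l_{r_0}, \dots, \l_{r_h} = \l_{n(k)}$ appearing in the algorithm of Proposition \ref{PruebaAlgQO}. This is exactly the situation Lemma \ref{ap3Crucial} was crafted to handle, so the proof should reduce to invoking \eqref{cla} and \eqref{cla2} for $j = i$ together with Remark \ref{op2}; the delicate point is checking that $\l \ge \l_1$ and $\l \in M_i$ put $\l$ in the range $1 \le i \le p(k)$ where Lemma \ref{ap3Crucial} applies, and that the coefficient $\theta(\l)$ being nonzero really does force $\l$ to be an honest exponent of $\z$ rather than an artifact of the expansion, so that all the hypotheses of the convexity lemmas are met.
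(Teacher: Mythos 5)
Your overall skeleton matches the paper's (vanishing of $\a^j_{r_j}$ for $j>p$ by showing $r_j<0$ via the bound on $r$, Definition \ref{defpk} and Lemma \ref{lema-obs}; $\l\in M_i$ from the range of summation in \eqref{alfa-r-j}), but the decisive step --- the bound $l\leqslant s-\langle\nu,e_{q_{\l}}\rangle$ --- is handled by a reduction that is not valid. You propose to prove the stronger inequality $r+\langle\nu,\l_1\rangle+\langle\nu,e_{q_\l}\rangle\leqslant s+\langle\nu,\l_i\rangle$, i.e.\ you discard the term $\langle\nu,\l-\l_i\rangle$. That term is precisely what compensates when $e_{q_\l}$ is a coordinate direction of $\l$ outside $\ell(\l_i)$: schematically, for $n=0$, $i=1$ and $\l=\l_1+e_c$ with $e_c\notin\ell(\l_1)$ and $\langle\nu,\l_1\rangle<\langle\nu,e_c\rangle$, one has $e_{q_\l}=e_c$, your sufficient condition reads $r\leqslant s-\langle\nu,e_c\rangle$ and fails for $r$ near its allowed maximum $s-\langle\nu,\l_1\rangle$, although the desired bound on $l=r-\langle\nu,e_c\rangle$ does hold. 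The missing idea, which is how the paper argues, is to decompose $\l=\l'+\sum_{e_c\notin\ell(\l_i)}h_c e_c$ with $\l'\in\ell(\l_i)$ and integers $h_c\geqslant 0$ (possible since $M_i\subset\Z^d+\ell(\l_i)$), and to split according to where $e_{q_\l}$ lies: if it comes from some $e_c$ with $h_c\neq 0$ then $\langle\nu,e_{q_\l}\rangle\leqslant\langle\nu,\l\rangle$ and the unweakened estimate $l\leqslant s-\langle\nu,\l\rangle$ (using Remark \ref{op2} when $m\neq 0$) finishes; only when $e_{q_\l}\in\ell(\l_i)$ does one compare with $e_{q_{\l_i}}$ and invoke \eqref{cla}--\eqref{cla2}. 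Note also that Lemma \ref{ap3Crucial} is stated for $1\leqslant j<p(k)$, so ``$j=i$ with $i\leqslant p(k)$'' leaves the boundary case $i=p(k)$ to be treated separately.

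The same omission undermines your argument for $\l\in\ell_\nu^s$. The claim that every coordinate in the support of $\l$ has $\nu$-value at most $s$ does not follow from the last assertion of Lemma \ref{crucial} together with $\ell(\l_1)\subseteq\ell(\l)$: that assertion concerns only $\l_j$ with $j<n(k)$, whereas for $\l\geqslant\l_{n(k)}$ the support of $\l$ contains $e_{m(k)}$ (Remark \ref{nk}), whose value can exceed $s$ (see the remark following Lemma \ref{lema-obs}); excluding this uses the hypothesis bound on $r$ together with the condition that the term actually appears ($l\geqslant 0$), and, when $m=0$, the equality $\ell_\nu^s=\ell_k(\nu)$ --- in other words, the same bookkeeping as the final inequality, not a preliminary observation. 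Similarly, in the vanishing part your claim that the bound on $r$ gives $r+\langle\nu,\l_1\rangle<\langle\nu,\l_{p+1}\rangle$ is covered by Definition \ref{defpk} and Lemma \ref{lema-obs}(ii) only when $m\neq 0$; the subcase $n>0$, $m=0$, where $p=n$ and Lemma \ref{lema-obs}(ii) is vacuous because $e_0=\infty$, needs its own argument. So the plan assembles the right ingredients, but the bridge from $e_{q_\l}$ to $e_{q_{\l_i}}$ through the integral/fractional decomposition of $\l$ is absent, and the inequality you declare sufficient is simply not true.
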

\begin{proof}By Proposition \ref{ur} we have that a term
$\a_j^{r_j}$ may be non-zero  in the expansion of $u_r(x_{d+1})$
if $r_j \geqslant 0$ (see Notation \ref{notr_j}). By Remark
\ref{salva} this happens if
\[
0 \, \leqslant \left\{
\begin{array}{lcl} s -  \langle \nu, \l_{j}
\rangle  & \mbox{ if }  &  {n} = 0,
\\
s -  \langle \nu, e_{{m} }  - \l_{{n}}   + \l_{j} \rangle   & \mbox{ if }  &
{n} > 0             \mbox{ and } {{m}} \ne 0,
\\

s -    \langle \nu, \l_{{n}} - \l_j \rangle   & \mbox{ if }  &  {n} > 0
\mbox{ and } {{m}} =  0.
\end{array}\right.
\]
This holds if and only if $j \leqslant {p}$ (see Notation
\ref{nota}).

Suppose that $u_l (\l)$ appears with non zero coefficient in
$\a_j^{r_j}$ for some $1 \leqslant j \leqslant {p}$.  This implies
that $X^\l$ appears in the expansion (\ref{expan}) of $\z$,
$\langle \nu , \l_j \rangle \leqslant \langle \nu, \l \rangle
\leqslant \langle \nu, \l_{j+1} \rangle$ and $\l \ne \l_{j+1}$,
thus $\l \in M_j$ and $\l \leqslant_\nu \l_j$. Then we obtain
\[
0 \leqslant r_j = \langle \nu, \l - \l_j \rangle + l  \leqslant
\left\{
\begin{array}{lcl} s -  \langle \nu, \l_{j}
\rangle  & \mbox{ if }  &  {n} = 0,
\\
s -  \langle \nu, e_{{m} }  - \l_{{n}}   + \l_{j} \rangle    & \mbox{ if }  &
{n} > 0      \mbox{ and } {{m}} \ne 0 ,
\\
 s -    \langle \nu, \l_{{n}} - \l_j \rangle     & \mbox{ if }  &  {n} > 0
\mbox{ and } {{m}} = 0.
\end{array} \right.
\]
We deduce from these inequalities, using  (\ref{cla}), $\l_j
\leqslant_\nu \l$ and $\l_{{n}} \leqslant_{\nu} \l_{{p}}$ that
\begin{equation}        \label{cla3}
 l \leqslant   s - \langle \nu, e_{q_{{\l}_j}} \rangle.
\end{equation}

Since $\l \in M_j$ is $\geqslant_{\nu} \l_j$ we have a unique
expansion of the form $\l = \l' + \sum_{e_i \notin \ell (\l_j)}^{
1\leqslant i \leqslant d}  h_i e_i$, where $\l' \geqslant \l_j$
belongs to $\ell (\l_j)$ and $h_i \geqslant 0$ are integers. Since
$\langle \nu, \l \rangle \leqslant s$ it follows from Proposition
\ref{crucial} that $\l $ belongs to $\ell_{\nu}^s$. We deduce that
$\langle \nu, \l \rangle \geqslant \langle \nu, \l'\rangle $ and
$\langle \nu , \l \rangle \geqslant \langle \nu, e_i \rangle$ if
$h_i > 0$. This implies that
 $l \leqslant s - \langle \nu, e_i \rangle$ if $h_i \ne 0$ and together with (\ref{cla3})
proves that $l \leqslant s - \langle \nu, e_{q_{\l}} \rangle$.
\end{proof}

\begin{Not}
We denote by $\bar{B}_{0}$ the $\C$-algebra of $\mathcal A_{T_N}$
generated by:
\begin{equation} \label{generQO}
c(x_i), u_1 (x_i), \dots , u_{s - \langle \nu, e_i \rangle} (x_i)
\mbox{ for those } 1\leqslant i\leqslant d+1  \mbox{ such that }
\langle \nu, e_i \rangle \leqslant s.
\end{equation}
 We set $\bar{C}$ the $\C$-algebra generated by $\{ c(e_j) \mid \langle\nu,e_j\rangle\leqslant s, 1\leqslant j\leqslant d+p \}$
\label{algebraO}
\end{Not}

\begin{Pro}    \label{pro-aux}
We use Notations \ref{nota} and \ref{algebraO}. The
$\bar{C}$-algebra generated by  $\bar{B}_{0}$ coincides with the
$\bar{C}$-algebra of $\mathcal{A}_{T_N}$ generated by the
following elements:
\begin{equation}   \label{one}
               \{  u_1 (e_i), \dots, u_{s-\langle\nu,e_i\rangle} (e_i)  \}_{i = 1, \dots, k-1}
\end{equation}
        together with
\begin{equation}     \label{two}
\left\{
\begin{array}{lcl}
\{   u_l (e_k) \} _{l=1}^{s - \langle \nu, e_k \rangle}
 &  \mbox{ if }  &  {n} =0
\\
\{  u_l (e_{{m}}) \} _{l = 1}^{s - \langle \nu, e_i \rangle} \cup
\{ u_{l} (x_{d+1}) \}_{l =s -  \langle \nu, e_{{m}} - \l_{{n}} +
\l_1  \rangle + 1 }^{s - \langle \nu, \l_1 \rangle}
 &  \mbox{ if }    &    {n} >1               \mbox { and } {m}  \ne  0,
\\
 \{
u_{l} (x_{d+1}) \}_{l =  \langle \nu, \l_{{n}} \rangle + 1 }^{s -
\langle \nu, \l_1 \rangle}
 &  \mbox{ if } &  {n} > 0 \mbox{ and }  {m} = 0.
\end{array} \right.
\end{equation}
\end{Pro}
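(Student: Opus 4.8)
The plan is to check the asserted equality of the two $\bar C$-subalgebras of $\mathcal A_{T_N}$ generator by generator. First one disposes of the constants: $c(x_i)=c(e_i)$ for $1\le i\le d$ and $c(x_{d+1})=\b_{\l_1}c(\l_1)=\b_{\l_1}c(e_{d+1})$, so that every $c(x_i)$ attached to a variable with $\langle\nu,e_i\rangle\le s$ already lies in $\bar C$; here one uses Lemma~\ref{c1} (giving $\langle\nu,e_i\rangle\le\f_k(\nu)\le s$ for every $e_i$ occurring in $w_k$, in particular for $e_k$ when $n=0$ and, via Remark~\ref{op2}, for $e_m$ when $m\neq 0$) together with $(\ref{ch-order2})$ and Definition~\ref{defpk} (giving $\langle\nu,\l_j\rangle\le s$ for $1\le j\le p$). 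Hence $\bar B_0$ is generated over $\bar C$ by the family $\{u_l(x_i)\mid 1\le l\le s-\langle\nu,e_i\rangle\}$. Next, $\ell_\nu^s=\ell_k(\nu)$ by Lemma~\ref{crucial}, and the relabelling of Notation~\ref{nota} then identifies $\{1\le i\le d\mid\langle\nu,e_i\rangle\le s\}$ with $\{1,\dots,k\}$ when $n=0$, with $\{1,\dots,k-1\}$ when $n>0$, $m=0$, and with $\{1,\dots,k-1,m\}$ when $n>0$, $m\neq0$ ($e_m$ being the unique coordinate vector of $\ell_k(\nu)$ not occurring in $w_k$). Consequently the generators $u_l(e_i)$, $i\le k-1$, of $\bar B_0$, together with $u_l(e_k)$ when $n=0$ and $u_l(e_m)$ when $m\neq0$, are literally the elements of $(\ref{one})$ and of the first line of $(\ref{two})$, and the high-order terms $u_l(x_{d+1})$ retained in $(\ref{two})$ also belong to $\bar B_0$; so the inclusion $(\ref{one}),(\ref{two})\subseteq\langle\bar C,\bar B_0\rangle$ is immediate, and the whole content is the opposite inclusion, namely that the remaining generators $u_r(x_{d+1})$ of $\bar B_0$ — those $r$ below the threshold at which Proposition~\ref{ur2} applies ($r\le s-\langle\nu,\l_1\rangle$ if $n=0$; $r\le s-\langle\nu,e_m-\l_n+\l_1\rangle$ if $n>0$, $m\neq0$; $r\le\langle\nu,\l_n\rangle$ if $n>0$, $m=0$) — lie in the $\bar C$-algebra generated by $(\ref{one})$ and $(\ref{two})$.

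For such an $r$ I would expand $u_r(x_{d+1})=\sum_{j=1}^p\a_j^{r_j}$ by Proposition~\ref{ur2}; each term $u_l(\l)$ occurring in $\a_j^{r_j}$ satisfies $\l\in\ell_\nu^s\cap M_j$ and $l\le s-\langle\nu,e_{q_\l}\rangle$, with coefficient $\theta(\l)=\b_\l\b_{\l_1}^{-1}c(\l-\l_1)$. One verifies $\theta(\l)\in\bar C$ from $\l-\l_1\in\s^\vee\cap M_p$, $\langle\nu,\l-\l_1\rangle\le s$ and $\l\in\ell_\nu^s$, which by Lemma~\ref{crucial} and the choice of $p$ exhibit $\l-\l_1$ as a non-negative combination of the $e_i$ with $\langle\nu,e_i\rangle\le s$ and of $\l_1,\dots,\l_p$. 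Then one rewrites each $u_l(\l)$ by Lemma~\ref{vectorQO}: since $\ell_\nu^s$ admits the coordinate basis $\{e_1,\dots,e_k\}$ when $n=0$ and $\{e_1,\dots,e_{k-1},e_m\}$ when $n>0$, $m\neq0$, the element $u_l(\l)$ is a $\Q$-polynomial in the $u_{l'}(e_i)$ with $l'\le l$ and $e_i$ in that basis; because $\langle\nu,e_i\rangle\le\langle\nu,e_{q_\l}\rangle$ for each such $e_i$, the inequality $l\le s-\langle\nu,e_{q_\l}\rangle$ forces $l'\le s-\langle\nu,e_i\rangle$, so $u_l(\l)$, and hence $u_r(x_{d+1})$, lies in $\langle\bar C,(\ref{one}),(\ref{two})\rangle$. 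In the remaining case $n>0$, $m=0$, the subspace $\ell_\nu^s=\mathsf{span}\{e_1,\dots,e_{k-1},\l_n\}$ has no coordinate basis, but the $\l$ that occur lie in $M_j$ with $j<n$ and, by the last assertion of Lemma~\ref{crucial}, have all their coordinates among $e_1,\dots,e_{k-1}$, so Lemma~\ref{vectorQO} again gives $u_l(\l)\in\Q[u_{l'}(e_i)]_{i\le k-1,\,l'\le l}$ and the same bounding argument closes the case.

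The main obstacle is the combinatorial bookkeeping in the middle step: composing the two expansions (Proposition~\ref{ur2} for $u_r(x_{d+1})$ and Lemma~\ref{vectorQO} for each $u_l(\l)$) and checking that every surviving monomial $u_{l'}(e_i)$ respects the truncation bound $l'\le s-\langle\nu,e_i\rangle$. This rests on the convexity inequalities of Lemmas~\ref{c1}, \ref{crucial} and~\ref{ap3Crucial}, on Remark~\ref{op2}, and on the precise description — via the relabelling of Notation~\ref{nota} — of the coordinate vectors spanning $\ell_\nu^s$; it also rests on the fact that the coefficients $\theta(\l)$ never leave $\bar C$, for which the cutoff $p=p(k)$ and the exact upper bounds on $r$ are essential, since these are precisely what make the high-order $u_l(x_{d+1})$ in $(\ref{two})$ irreducible to the other generators (and hence retained). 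Treating the three cases $n=0$, $n>0$ with $m\neq0$, and $n>0$ with $m=0$ separately then yields the stated generating sets.
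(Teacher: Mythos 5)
Your strategy is the paper's own: identify via Lemma \ref{crucial} which coordinate generators of $\bar{B}_0$ actually occur, observe that the whole content is to express the low--order $u_r(x_{d+1})$, and do so by combining the expansion of Propositions \ref{ur} and \ref{ur2} with the rewriting Lemma \ref{vectorQO} and the degree bound $l\leqslant s-\langle\nu,e_{q_\l}\rangle$. In the cases $n=0$ and $n>0$, $m\neq 0$ this is exactly the published argument and it works (one small point: when applying Lemma \ref{vectorQO} you should take as spanning set the coordinate vectors of $\ell(\l)$, i.e.\ the support of $\l$, rather than a full basis of $\ell_\nu^s$, since it is for those vectors that $\langle\nu,e_i\rangle\leqslant\langle\nu,e_{q_{\l}}\rangle$ holds by definition of $q_{\l}$).

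The gap is in the subcase $n>0$, $m=0$. There you must treat $u_r(x_{d+1})$ for all $1\leqslant r\leqslant\langle\nu,\l_{n}\rangle$, and you justify the reduction by asserting that every $u_l(\l)$ occurring has $\l\in M_j$ with $j<n$ and support in $\{e_1,\dots,e_{k-1}\}$, citing the last assertion of Lemma \ref{crucial}; but that assertion concerns only the characteristic exponents $\l_j$, $j<n(k)$, and the claim fails as soon as $r\geqslant\langle\nu,\l_{n}-\l_1\rangle$: then $r_{n}\geqslant 0$ and the block $\a_{r_{n}}^{n}$ of Proposition \ref{ur} contributes $\theta(\l_{n})u_{r_{n}}(\l_{n})$ (and further terms $u_l(\l)$ with $\l\notin\mathsf{span}_\Q(e_1,\dots,e_{k-1})$), which by Proposition \ref{alg-indQO} is algebraically independent of $\bar{C}$, of the generators (\ref{one}), and of the retained $u_l(x_{d+1})$ (these only give access to $u_{l'}(\l_{n})$ with $l'>\langle\nu,\l_1\rangle\geqslant r_{n}$); for $n=1$ this obstruction occurs for every $r\geqslant 1$. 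You also invoke Proposition \ref{ur2} beyond its hypothesis $r\leqslant s-\langle\nu,\l_{n}-\l_1\rangle$ without checking that this range contains $[1,\langle\nu,\l_{n}\rangle]$. In fact no argument can close this case as literally stated: comparing the number of generators with $sk-\ord_{\J_k}(\nu)$ (Lemma \ref{Lemaprekey}, Theorem \ref{keyQO}) forces the lower limit in the third line of (\ref{two}) to be $\langle\nu,\l_{n}-\l_1\rangle+1$ rather than $\langle\nu,\l_{n}\rangle+1$ -- a misprint which the paper's own two-line treatment of this case also glosses over. With that corrected reading the $r$ you must handle satisfy $r_{n}\leqslant 0$, the block $\a^{n}_{r_{n}}$ degenerates to constants, and your reduction, which is the paper's, goes through.
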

\begin{proof} We consider the case ${n} =0$ (resp. ${n} > 0$ and
${m} = 0$). By Lemma \ref{crucial} we have that if $\langle \nu,
e_i \rangle \leqslant s$ and $1\leqslant i \leqslant d$ then $1
\leqslant i \leqslant k$. (resp. $1 \leqslant i \leqslant k-1$).
It is sufficient to prove that the elements $u_r (x_{d+1}) $ for
$1 \leqslant r \leqslant s - \langle \nu, \l_1 \rangle$ belong to
the $\bar{C}$-algebra generated by (\ref{one}) and (\ref{two}). By
Propositions \ref{ur} and \ref{ur2} we get $u_r (x_{d+1} ) = \sum
\theta (\l) u_l (\l)$ where  $\l \in \ell_{\nu}^s \cap M_{{p}} $,
$\theta (\l) \in \bar{C}$ and $l \leqslant s - \langle \nu,
e_{q_\l} \rangle$. This implies that $\l \in \mathsf{span}_\Q
(e_1, \dots, e_k )$ (resp. $\l \in \mathsf{span}_\Q (e_1, \dots,
e_{k-1})$) and $l \leqslant s - \langle \nu, e_i \rangle$ for $i=
1, \dots, k$ (resp. for $i= 1, \dots, k-1$) thus by Lemma
\ref{vectorQO} we deduce that $ \theta (\l) u_l (\l) \in \bar{C} [
u_1 (e_i), \dots, u_{s - \langle \nu, e_i \rangle } (e_i) ]_{i=
1\ldots k (\mbox{{\rm \small resp. }}  k-1)}$.

We deal then with the case  ${n} > 0$ and ${m} \ne 0$. It is
sufficient to prove that the elements $u_r (x_{d+1}) $ for $1
\leqslant r \leqslant s -  \langle \nu, e_{{m}} - \l_{{n}} + \l_1
\rangle $ belong to the $\bar{C}$-algebra generated by (\ref{one})
and (\ref{two}). The conclusion follows by the same arguments, by
using Propositions       \ref{ur},  \ref{ur2} and Lemma
\ref{vectorQO}. \end{proof}

\begin{Pro} \label{atn2}
We set $\bar{F}_j:= \{ c(e_i) \mid \langle\nu,e_i\rangle\leqslant
s, 1\leqslant i\leqslant d+j \}$ for $j =1, \dots, {p}$. Suppose
that for some $1 \leqslant j < {p}$ the algebras $\bar{B}_{0},
\dots, \bar{B}_{j-1}$ have been defined by induction. Then
$\bar{F}_j$ is a multiplicative subset of $\bar{B}_{j-1}$ and the
localization $\bar{B}_j := \bar{B}_{j-1} [(\bar{F}_{j})^{-1}]$ is
a subalgebra of $\mathcal{A}_{T_N}$. Then the ring $ \bar{B} :=
\bar{B}_{{p}}$ is generated by $\bar{B}_{0}$ as a
$\bar{C}$-algebra. The ring $ \bar{B} $ is the coordinate ring of $j_s (H^*_\nu)$.
\end{Pro}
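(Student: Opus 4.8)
The plan is to mimic the inductive localization argument of Proposition \ref{atn}, but carrying along the truncation data so that at each step we only keep the finitely many $u_l$'s that survive modulo $t^{s+1}$. First I would check the base of the induction: by Notation \ref{algebraO} the algebra $\bar{B}_0$ contains $c(x_i)=c(e_i)$ for all $1\leqslant i\leqslant d+1$ with $\langle\nu,e_i\rangle\leqslant s$, and by the last assertion of Lemma \ref{crucial} (together with the inequalities defining $A_k$) the indices $i$ with $\langle\nu,e_i\rangle\leqslant s$ among $1\leqslant i\leqslant d$ are exactly $i=1,\dots,k$ (or $k-1$ when $n>0$). Hence $\bar{F}_1=\{c(e_i)\mid \langle\nu,e_i\rangle\leqslant s,\ 1\leqslant i\leqslant d+1\}\subset\bar{B}_0$ is a multiplicative set, so $\bar{B}_1:=\bar{B}_0[(\bar{F}_1)^{-1}]$ is defined; it is a subalgebra of $\mathcal{A}_{T_N}$ because every generator in (\ref{generQO}) lies there.

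For the inductive step, suppose $\bar{B}_0,\dots,\bar{B}_{j-1}$ have been constructed with $1\leqslant j<p$. The key point is to show $c(\l_j)\in\bar{B}_{j-1}$, which gives $c(e_{d+j})=c(\l_j)\in\bar{B}_{j-1}$ and hence that $\bar{F}_j$ is a multiplicative subset of $\bar{B}_{j-1}$. To do this I would apply Proposition \ref{ur} and Proposition \ref{ur2} to the element $u_r(x_{d+1})$ for a carefully chosen $r$ with $\langle\nu,\l_j\rangle\leqslant r$ small (the analogue of the choice $r=\langle\nu,\l_j\rangle$ in the proof of Proposition \ref{atn}, but respecting the bounds on $r$ in Proposition \ref{ur2}): the expansion $u_r(x_{d+1})=\sum_{i=1}^{p}\alpha_i^{r_i}$ has all terms with index $i<j$ lying in $\bar{B}_{j-1}$ by the induction hypothesis and Lemma \ref{vectorQO} (the corresponding $\l$'s lie in $M_{j-1}\cap\ell_\nu^s$, whose defining vectors have already been inverted), while the term $\alpha_j^{r_j}$ contributes exactly $\theta(\l_j)u_{r_j}(\l_j)=\beta_{\l_j}\beta_{\l_1}^{-1}c(\l_j)c(\l_1)^{-1}u_{r_j}(\l_j)$ up to terms of strictly lower $u$-index. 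Solving for $c(\l_j)$ — using that $c(\l_1)^{-1}\in\bar{B}_0$ and that $u_{r_j}(\l_j)$ can be arranged to be one of the surviving generators via Lemma \ref{vectorQO} — shows $c(\l_j)\in\bar{B}_{j-1}$. Then $\bar{B}_j:=\bar{B}_{j-1}[(\bar{F}_j)^{-1}]$ is again a subalgebra of $\mathcal{A}_{T_N}$.

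Next I would argue that $\bar{B}:=\bar{B}_p$ is generated by $\bar{B}_0$ as a $\bar{C}$-algebra. Since $\bar{C}$ is generated by $\{c(e_j)\mid \langle\nu,e_j\rangle\leqslant s,\ 1\leqslant j\leqslant d+p\}$, the inductive step shows all of $\bar{C}$ is already in $\bar{B}_{p}$ and that the successive localizations only invert elements of $\bar{C}$; thus $\bar{B}_p=\bar{B}_0[\bar{C}]$, i.e. $\bar{B}$ is the $\bar{C}$-algebra generated by $\bar{B}_0$. Finally, for the identification with the coordinate ring of $j_s(H^*_\nu)$, I would invoke the parametrization (\ref{fam}) of $H_{S,\nu}$ by the arc space of the torus together with Proposition \ref{pro-aux}: taking $s$-jets kills all $u_l(x_i)$ with $l>s-\langle\nu,e_i\rangle$, and the surviving coordinate functions are precisely the generators (\ref{one})–(\ref{two}); Proposition \ref{pro-aux} identifies the algebra they generate over $\bar{C}$ with $\bar{B}$, and the algebraic independence statement of Proposition \ref{alg-indQO} (applied to $e_1,\dots,e_{k-1}$ and one further independent vector) shows there are no further relations, so $\bar{B}$ is exactly the coordinate ring of $j_s(H^*_\nu)$.

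The main obstacle I expect is the bookkeeping in the inductive step: one must check that the chosen exponent $r$ simultaneously satisfies the upper bounds in Proposition \ref{ur2} (so that the expansion truncates at $j\leqslant p$ and the surviving $u_l(\l)$ obey $l\leqslant s-\langle\nu,e_{q_\l}\rangle$) and is large enough that $r_j\geqslant 0$, so that $c(\l_j)$ actually appears; this requires a delicate use of the inequalities in Lemma \ref{ap3Crucial} and Lemma \ref{lema-obs}, and the three cases $n=0$, $n>0$ with $m=0$, and $n>0$ with $m\neq0$ must be treated separately, exactly as in Proposition \ref{ur2} and Proposition \ref{pro-aux}.
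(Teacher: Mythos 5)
Your proposal is correct and follows essentially the same route as the paper's proof: the base case via $c(x_i)=c(e_i)$, the inductive extraction of $c(\l_j)$ from a suitable $u_r(x_{d+1})$ using Propositions \ref{ur} and \ref{ur2} together with Lemma \ref{vectorQO}, the generation statement over $\bar{C}$ via Proposition \ref{pro-aux}, and the identification of $\bar{B}$ with the coordinate ring of $j_s(H^*_\nu)$ through the parametrization (\ref{fam}). The only adjustment needed in your inductive step is to choose $r=\langle\nu,\l_j-\l_1\rangle$ so that $r_j=0$ and $u_{r_j}(\l_j)=1$ (one cannot ``solve'' by dividing by a non-unit generator $u_{r_j}(\l_j)$), and then recover $c(\l_j)=\b_{\l_1}\b_{\l_j}^{-1}c(\l_1)\theta(\l_j)$ by multiplying by $c(\l_1)\in\bar{B}_0$ rather than by its inverse --- exactly as the paper does for $j=2$.
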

{\em Proof}. Set  $\bar{C}_j :=  \C [\bar{F}_j] [ (\bar{F}_j)^{-1}
]$.
 By definition we have that $c(x_i) = c(e_i)$ for $i=
1,\dots, d+1$, hence it follows that $\bar{F}_1 \subset \bar{B}_0$
is a multiplicative subset. We apply Propositions \ref{ur} and
\ref{ur2} for $ r=  \langle \nu, \l_2 - \l_1 \rangle$. We deduce
that $u_r (x_{d+1}) = \a_{r_1} + \theta ( \l_2)$ where $\a_{r_1}$
belongs to $\bar{B}_1$. It follows that $c(\l_2) = \b_{\l_1}
\b_{\l_2}^{-1} c (\l_1) \theta (\l_2)$ belongs to $\bar{B}_1$ and
this implies that $\bar{F}_2$ is a multiplicative subset of
$\bar{B}_1$. Suppose that the assertion is true for $1 \leqslant
j-1 \leqslant g$. Then we apply Proposition \ref{ur2} for $r =
\langle \nu, \l_j \rangle$ and we deduce similarly that
$\bar{F}_j$ is a multiplicative subset of $\bar{B}_{j-1}$. We can
iterate this procedure for $j= 1, \dots, {p}$. By Proposition
\ref{pro-aux} it follows that $\bar{B}$ is the  $\bar{C}$-algebra
generated by $\bar{B}_0$. By construction $\bar{B}$ is the 
coordinate ring of $j_s (H^*_\nu)$ (compare with Proposition
\ref{atn}). \hfill $\Box$

\begin{Lem}
The generators  (\ref{one}) and (\ref{two}) of the
$\bar{C}$-algebra $\bar{B}$  are algebraically independent over
the field of fractions of $ \bar{C}$. \label{Lemaprekey}
\end{Lem}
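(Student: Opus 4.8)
The plan is to reduce the statement to the algebraic independence result of Proposition \ref{alg-indQO} by a transcendence degree count. By Propositions \ref{pro-aux} and \ref{atn2} the $\bar{C}$-algebra $\bar{B}$ is generated by the finite set $G$ consisting of the elements listed in (\ref{one}) and (\ref{two}); write $N:=\#G$. Since $\l_j=e_{d+j}$ lies in $M_j$, we have $c(\l_j)\in C_j\subseteq C$, hence $\bar{C}\subseteq C$; as a set algebraically independent over a field is algebraically independent over every subfield, it is enough to show that the $\mathrm{Frac}(C)$-subalgebra of $\mathcal{A}_{T_N}$ generated by $G$ has transcendence degree $N$ over $\mathrm{Frac}(C)$.

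If $n=0$ we are essentially in the situation of \cite{CoGP}: by Lemma \ref{crucial} one has $G=\{\,u_l(e_i)\mid 1\leqslant i\leqslant k,\ 1\leqslant l\leqslant s-\langle\nu,e_i\rangle\,\}$, a subset of $\{u_i(e_j)\}_{i\geqslant 1,\,1\leqslant j\leqslant k}$, and the latter is algebraically independent over $\mathrm{Frac}(C)$ by Proposition \ref{alg-indQO} since $e_1,\dots,e_k$ are linearly independent. Hence $G$ is algebraically independent and we are done.

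Assume now $n>0$ and put $\mathcal{E}:=\{e_1,\dots,e_{k-1},\l_n\}$, which by Lemma \ref{crucial} is a basis of $\ell_k(\nu)=\ell_\nu^s$. By Proposition \ref{alg-indQO} the family $\{u_i(v)\mid v\in\mathcal{E},\ i\geqslant 1\}$ is algebraically independent over $\mathrm{Frac}(C)$, so the $\mathrm{Frac}(C)$-algebra $R$ it generates is a polynomial ring, filtered by the subrings $R_l$ spanned by the $u_j(v)$ with $j\leqslant l$; by Lemma \ref{vectorQO}, $u_l(w)\in R_l$ for every $w\in\ell_\nu^s$. Every element of $G$ lies in $R$: the $u_l(e_i)$ with $i\leqslant k-1$ are among the $u_j(v)$; the $u_l(e_m)$ (case $m\ne 0$) lie in $R$ because $e_m\in\ell_\nu^s$; and each $u_r(x_{d+1})$ lies in $R$ because, by Propositions \ref{ur} and \ref{ur2}, it is a combination over $\bar{C}$ of terms $u_l(\l)$ with $\l\in\ell_\nu^s$ and $l\leqslant s-\langle\nu,e_{q_\l}\rangle$. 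It remains to prove that the $N$ elements of $G$, regarded in the polynomial ring $R$, are algebraically independent over $\mathrm{Frac}(C)$. For this one fixes a monomial order on the variables $u_j(v)$ of $R$ refining the order by the index $j$, identifies the leading monomial of each element of $G$, and checks that $G$ maps bijectively onto a set of $N$ distinct variables $u_j(v)$: the leading monomial of $u_l(e_i)$ is itself (coefficient $1$); that of $u_l(e_m)$ is a scalar multiple of some $u_j(\l_n)$, the scalar being nonzero by the $\mathcal{E}$-coordinates of $e_m$; and that of $u_r(x_{d+1})$ is a $\mathrm{Frac}(\bar{C})$-unit multiple of $u_{r_n}(\l_n)$, the unit being $\theta(\l_n)=\beta_{\l_n}\beta_{\l_1}^{-1}c(\l_n)c(\l_1)^{-1}$, by Proposition \ref{Prop7-1}(iii). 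A triangular change of variables with such invertible diagonal forces algebraic independence, and a direct count shows the target is a set of $N$ variables in each of the cases $m=0$ and $m\ne 0$ of (\ref{two}).

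The main obstacle is precisely this last step: making the leading-term analysis of the mixed generators $u_r(x_{d+1})$ rigorous and verifying the bijection onto a set of $N$ variables. One has to show that the index ranges in (\ref{one}) and (\ref{two}) dovetail exactly — the leading variables produced by the $u_r(x_{d+1})$ being the $u_l(\l_n)$ whose index $l$ lies above the range $l\leqslant s-\langle\nu,e_m\rangle$ already covered by the pure generators $u_l(e_m)$ (respectively, when $m=0$, exhausting $\{u_l(\l_n):1\leqslant l\leqslant s-\langle\nu,\l_n\rangle\}$ after a further bookkeeping) — and that no term $u_l(\l)$ with $\l\notin\ell_\nu^s$ or with index outside the admissible range ever interferes. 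This is exactly where Proposition \ref{ur2}, the bound (\ref{cla}) of Lemma \ref{ap3Crucial}, Lemma \ref{lema-obs} and the convexity properties of Section \ref{conv-newtonQO} are used.
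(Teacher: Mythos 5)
Your overall strategy is the same as the paper's: reduce to Proposition \ref{alg-indQO} via the basis $\{e_1,\dots,e_{k-1},\l_{{n}}\}$ of $\ell_\nu^s$, trade the generators $u_l(e_{{m}})$ for $u_l(\l_{{n}})$ (this is exactly case (iii) of the paper's proof), and treat the mixed generators $u_r(x_{d+1})$ by a triangular, leading-term substitution. However, there is a genuine gap, and it sits precisely at the step you yourself flag as ``the main obstacle''. First, your claim that every element of $G$ lies in the polynomial ring $R$ on the variables $u_j(v)$, $v\in\{e_1,\dots,e_{k-1},\l_{{n}}\}$, is justified by Proposition \ref{ur2}; but that proposition only applies for $r\leqslant s-\langle\nu,e_{{m}}-\l_{{n}}+\l_1\rangle$ (resp.\ the analogous bound), i.e.\ exactly the range in which Proposition \ref{pro-aux} \emph{eliminates} $u_r(x_{d+1})$. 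The generators retained in (\ref{two}) have $r$ strictly above that bound, and for such $r$ the expansion $u_r(x_{d+1})=\sum_j\a^j_{r_j}$ can contain terms $u_l(\l)$ with $\l\notin\ell_\nu^s$: e.g.\ when $\langle\nu,\l_{t(k)}\rangle\leqslant s$ (which happens, cf.\ the remark after Lemma \ref{lema-obs}) the block $\a^{t(k)}_{r_{t(k)}}$ is nonzero and involves $u_{r_{t(k)}}(\l_{t(k)})$ with $\l_{t(k)}\notin\ell_k(\nu)$. So the containment $G\subset R$ is false in general; the extra variables do not by themselves destroy independence, but your stated reduction to a leading-monomial analysis inside $R$ collapses.

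Second, and more seriously, the decisive assertion that the leading contribution of $u_r(x_{d+1})$ in the $u_j(\l_{{n}})$'s is the unit multiple $\theta(\l_{{n}})u_{r_{{n}}}(\l_{{n}})$ does not follow from Proposition \ref{Prop7-1}(iii): that statement only bounds the top index \emph{within each block} $\a^j_{r_j}$, and the blocks with $j<{n}$ have top index $r_j>r_{{n}}$ (their leading terms are $u_{r_j}(\l_j)$, and already $j=1$ gives index $r$). To conclude one must show that every term of index $>r_{{n}}$ (and every term of index $r_{{n}}$ other than $\l=\l_{{n}}$) involves only exponents $\l\in\mathsf{span}_\Q(e_1,\dots,e_{k-1})$, hence by Lemma \ref{vectorQO} contributes no $u_j(\l_{{n}})$ with $j\geqslant r_{{n}}$; this uses Lemma \ref{crucial} together with Remark \ref{op2} ($\langle\nu,\l_{{n}}\rangle<\langle\nu,e_{{m}}\rangle$, which forces $e_{{m}}\notin\ell(\l_j)$ for $j<{n}$ and controls the non-characteristic exponents of small $\nu$-value). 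This is exactly the content of the paper's displayed expansion (\ref{flu2}), which is the heart of its proof of the Lemma; in your proposal it is deferred rather than proved, so the argument as written is incomplete at its essential point (and the final ``direct count'' matching generators to target variables in the two cases of (\ref{two}) is also left unverified).
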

\begin{proof}If ${n} >0$ and if $\langle \nu , \l_{{n}} \rangle <
r$ then we have that $u_r (x_{d+1})$ is of the form:
\begin{equation} \label{flu2}
 u_{r} (x_{d+1} ) = \theta_{\l_{{n}}} u_{r_{{n}}} (\l_{{n}} )  +
\sum_{\l  \ne \l_{{n}}} ^{ l \leqslant   r_{{n}}} \theta_\l  u_l
(\l) + \sum_{ e_1 \y \cdots \y e_{k-1} \y \l = 0} \theta_\l
u_{r_{{n}}} (\l).
\end{equation}
(see Notation \ref{notr_j}).
By Proposition \ref{ur} we have that $u_r (x_{d+1})  = \sum_{j=1}^g \a^j_{r_j}$ and if
 $u_l( \l)$ appears in the expansion (\ref{alfa-r-j}) of $\a^j_{r_j} $ with non zero coefficient
then  $l = r - \langle \nu, \l - \l_1 \rangle$
and  $\l \in M_j$ verifies that
\begin{equation}   \label{flu}
\langle \nu, \l_{j} \rangle <  \langle \nu, \l \rangle  \leqslant
\langle \nu, \l_{j+1} \rangle.
\end{equation}
If $1 \leqslant j < {n}$ then  we have the inequality $ l = r -
\langle \nu, \l - \l_1 \rangle  \leqslant  r - \langle \nu,
\l_{{n}} - \l_1 \rangle = r_{{n}}$.  The equality $l = r_{{n}}$
implies that $\langle \nu, \l \rangle = \langle \nu, \l_{{n}}
\rangle$, thus
 $\l$ belongs to $M_{{n} -1} \cap \mbox{{\rm span}}_\Q (e_1, \dots, e_{k-1})$
since $\langle \nu, \l_{{n}} \rangle < \langle \nu, e_{{m}}
\rangle$. If $ {n} \leqslant j  $  then we have that $l < r_n$ by
(\ref{flu}).

Now we prove the Proposition by distinguishing the following cases:
\begin{enumerate}
\item[(i)] If ${n} =0$ then the assertion is consequence of
Proposition \ref{alg-indQO}.

\item[(ii)] If  ${n} > 0$ and ${m} = 0$ the assertion is
consequence of (\ref{flu2}) and Proposition \ref{alg-indQO}.

\item[(iii)] If    ${n} > 0$ and ${m} \ne 0$ then by Proposition
\ref{vectorQO} and the definition of $e_{{m}}$ we have that the
$\bar{C}$ algebra generated by $
 \{ u_l (e_1), \dots, u_l (e_{k-1})
 \} _{l = 1}^{s - \langle \nu, e_i \rangle}  \cup  \{  u_l (e_{{m}}) \}_{l = 1}^{s - \langle \nu, e_{{m}} \rangle},
$ coincides with the   $\bar{C}$ algebra   generated by
$
 \{ u_l (e_i)
 \} _{l = 1, \dots, s - \langle \nu, e_i \rangle} ^{i=1, \dots, k-1}  \cup  \{  u_l ( \l_{{n}} ) \}_{l = 1}^{s - \langle \nu, e_{{m}} \rangle}
 $.
We deduce the statement from this, the equation (\ref{flu2}) and
Proposition \ref{alg-indQO}.
\end{enumerate}
\end{proof}

\medskip

{\em Proof of Theorem \ref{keyQO}.} The statement is clear if
$(\nu,s) \in A_0$. Suppose that $(\nu,s) \in A_k$ for some
$1\leqslant k\leqslant d$ and $s >0$. By Lemma \ref{Lemaprekey} we
have $sk- \ord_{\J_k}(\nu)$ generators of $\bar{B}$ as a $\bar{C}$
algebra. These generators, indicated in  (\ref{one}) and
(\ref{two}), are algebraically independent over the field of
fractions of $\bar{C}$. Notice that $\bar{C}$ is the $\C$ algebra
of the rank $k$ sublattice of $\ell_\nu^s\cap M_{{p}}$ hence
$\Spec \bar{C}$ is a $k$-dimensional torus. It follows from
this and Proposition \ref{atn2} that we can define an isomorphic
parametrization
\begin{equation}    \label{etapar}
 (\C^*)^k\times\A_\C^{sk-\mathsf{
ord}_{\J_k}(\nu)} \rightarrow \Spec\bar{B} \cong j^s
(H^*_\nu).
\end{equation}
\mathproofbox

\section{Description of the series $P(S)$ and proofs of the rationality results}
\label{GenptOrbitsQO}  First we study the
relations between the sets $j_s (H^*_\nu)$ when $\nu$ varies.
\begin{definition} \label{eqQO}
Define an equivalence relation in the set ${A}_k$ for $1 \leqslant
k \leqslant d$:
\begin{equation} \label{equivalence-bis}
(\nu,s) \sim (\nu',s) \in {A}_k \Leftrightarrow \left\{
\begin{array}{c}
s=s',\ \nu  \mbox{ and  }  \nu' \mbox{ define the same face of }
{\mathcal N}( \mathcal{J}_j)
\\
\mbox{ and } {\mathsf{ ord}}_{{\mathcal J}_j} (\nu) = {\mathsf{
ord}}_{{\mathcal J}_j} (\nu') \mbox{ for }1\leqslant j\leqslant k.
\end{array}
 \right.
\end{equation}

We denote by $[(\nu,s)]$ the equivalence class of $(\nu,s)\in A_k$ by this relation.
\end{definition}

\begin{remark} \label{remkQO}
\
\begin{enumerate}
\item [(i)] The set $\{[(\nu,s_0)]\ |\ (\nu,s_0)\in {A}_k\}$ is finite
    for $s_0>0$ and $1\leqslant k\leqslant d$.

\item [(ii)] If $k=d$ the equivalence relation is the
    equality.

\end{enumerate}
\end{remark}

\begin{Pro}    {\rm (cf.~Prop. 8.3 \cite{CoGP})}
Let $(\nu,s),(\nu',s)\in A_k$. The following are
equivalent:
\begin{enumerate}
\item [(i)] $(\nu,s)\sim(\nu',s).$
\item [(ii)] $\ell_\nu^s =\ell_{\nu'}^s $ and $\nu_{|
    \ell^s_\nu }=\nu'_{| \ell^s_{\nu'} }.$
\item [(iii)] $j_s(H_\nu^*)=j_s(H_{\nu'}^*)$.
\item [(iv)] $j_s(H_\nu^*)\cap j_s(H_{\nu'}^*)\neq\emptyset$.
\end{enumerate}
\label{Prop-equivQO}
\end{Pro}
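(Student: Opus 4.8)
The plan is to prove the cycle of implications (i) $\Rightarrow$ (ii) $\Rightarrow$ (iii) $\Rightarrow$ (iv) $\Rightarrow$ (i); the implication (iii) $\Rightarrow$ (iv) is immediate, since by Theorem \ref{keyQO} the set $j_s(H^*_\nu)$ is nonempty (it is isomorphic to $\{0\}$ or to $(\C^*)^k\times\A_\C^{sk-\ord_{\J_k}(\nu)}$). For (i) $\Rightarrow$ (ii): assuming $(\nu,s)\sim(\nu',s)$, for each $j\le k$ the vectors $\nu$ and $\nu'$ select the same set of exponents $w\in\J_j$ attaining $\ord_{\J_j}$. In particular, Lemma \ref{crucial}, which identifies $\ell_\nu^s$ with the $\Q$-span of the support of any generator of $\J_k$ realizing $\ord_{\J_k}(\nu)$, gives $\ell_\nu^s=\ell_{\nu'}^s$. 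To obtain $\nu|_{\ell_\nu^s}=\nu'|_{\ell_{\nu'}^s}$ I would run the algorithm of Proposition \ref{PruebaAlgQO} simultaneously for $\nu$ and $\nu'$: an induction on $j\le k$, using Lemma \ref{c1} and Remarks \ref{op2}, \ref{nk}, shows that the coincidence of faces and of the values $\ord_{\J_j}(\nu)=\ord_{\J_j}(\nu')$ forces $w_j(\nu)=w_j(\nu')$; since $w_1,\dots,w_k$ can be taken to form a basis of $\ell_\nu^s=\ell_k(\nu)$, the restriction $\nu|_{\ell_\nu^s}$ is then pinned down by the scalars $\ord_{\J_1}(\nu),\dots,\ord_{\J_k}(\nu)$, which agree for $\nu$ and $\nu'$.

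For (ii) $\Rightarrow$ (iii) I would verify that every ingredient of the construction in Section \ref{tor-jetQO} depends on $\nu$ only through the pair $(\ell_\nu^s,\nu|_{\ell_\nu^s})$: the integer $t(k)$ is read off $\ell_\nu^s$ directly; $n(k)$ and $m(k)$ are determined by which characteristic exponent, if any, and which extra coordinate vector $e_i$ lie in $\ell_\nu^s$; by Lemmas \ref{crucial}, \ref{ap3Crucial} and Remark \ref{nk} all the pairings $\langle\nu,e_i\rangle$ and $\langle\nu,\l_j\rangle$ entering the expansions of Propositions \ref{ur} and \ref{ur2} involve only vectors of $\ell_\nu^s$, so $p(k)$ is determined as well. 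Hence the generating sets (\ref{one}) and (\ref{two}) of the coordinate ring $\bar B$ over $\bar C$, and the monomial parametrization $x_i\mapsto t^{\langle\nu,e_i\rangle}c(e_i)u(x_i)$ exhibiting $\Spec\bar B\cong j_s(H^*_\nu)$ as a subset of $H_s(S)$, coincide for $\nu$ and $\nu'$; by Proposition \ref{atn2} this yields $j_s(H^*_\nu)=j_s(H^*_{\nu'})$.

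For (iv) $\Rightarrow$ (i): let $\gamma$ be a jet lying in $j_s(H^*_\nu)\cap j_s(H^*_{\nu'})$; the goal is to recover from $\gamma$ the data defining the equivalence class of $(\nu,s)$. From the order of vanishing of each coordinate $x_i\circ\gamma$ with $1\le i\le d$ one reads off $\langle\nu,e_i\rangle$ whenever it is $\le s$, and, by the normalization of $\z$ together with the inequalities (\ref{ch-order2}), the order of $x_{d+1}\circ\gamma$ gives $\langle\nu,\l_1\rangle$ when $\le s$; inspecting the higher coefficients of $x_{d+1}\circ\gamma$ through the expansions of Propositions \ref{ur} and \ref{ur2} one then recovers $\ell_\nu^s$, the integers $n(k),m(k),p(k),t(k)$, the values $\ord_{\J_j}(\nu)$ for $j\le k$, and hence the faces of $\mathcal{N}(\J_j)$ defined by $\nu$ (detected combinatorially from $\ell_\nu^s$ and $\nu|_{\ell_\nu^s}$ via Lemma \ref{crucial} and the algorithm of Proposition \ref{PruebaAlgQO}). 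Since $\gamma$ is common to both jet sets, all these invariants coincide for $\nu$ and $\nu'$, so $(\nu,s)\sim(\nu',s)$.

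I expect the main obstacle to be this last reconstruction step, and, to a lesser extent, the simultaneous execution of the algorithm in (i) $\Rightarrow$ (ii): one must make sure that the combinatorial invariants — the faces of the polyhedra $\mathcal{N}(\J_j)$ and the values $\ord_{\J_j}(\nu)$, which involve the characteristic exponents $\l_j$ that do not themselves appear among the arc coordinates $x_1,\dots,x_{d+1}$ — are genuinely determined by a single common $s$-jet. This rests crucially on the convexity results of Section \ref{conv-newtonQO}, above all Lemma \ref{crucial} together with Lemmas \ref{c1}, \ref{lema-obs} and \ref{ap3Crucial}.
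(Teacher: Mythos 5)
Your overall architecture is the same as the paper's: (i)$\Leftrightarrow$(ii) rests on Lemma \ref{crucial} and the algorithm of Proposition \ref{PruebaAlgQO}, and (ii)$\Rightarrow$(iii) is proved, as in the paper, by checking that the indices $n(k),m(k),t(k),p(k)$ and the parametrization (\ref{etapar}) of $j_s(H^*_\nu)$ depend on $\nu$ only through $(\ell_\nu^s,\nu_{|\ell_\nu^s})$; your (iii)$\Rightarrow$(iv) via nonemptiness is fine. (A minor point in (i)$\Rightarrow$(ii): you cannot quite assert $w_j(\nu)=w_j(\nu')$, since ties in $\leqslant_\nu$ may be broken differently; but this is not needed — it suffices that $w_j(\nu)$ lies in the common face of $\mathcal{N}(\mathcal{J}_j)$, so that $\langle\nu',w_j(\nu)\rangle=\ord_{\mathcal{J}_j}(\nu')=\ord_{\mathcal{J}_j}(\nu)=\langle\nu,w_j(\nu)\rangle$, and that $w_1(\nu),\dots,w_k(\nu)$ span $\ell_\nu^s$.)

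The genuine gap is in your closing step (iv)$\Rightarrow$(i), which is exactly where the paper (proving (iv)$\Rightarrow$(ii)) spends its effort. The one concrete mechanism you propose — that the $t$-order of $x_{d+1}\circ\gamma$ equals $\langle\nu,\lambda_1\rangle$ — is false in general: by Lemma \ref{expo} the branch $\z$ may contain terms $\beta_\lambda x^\lambda$ with $\lambda\in M_0=\Z^d$ and $\lambda_1\nleq\lambda$ (normalization only excludes $\lambda_1=(\lambda_1^1,0,\dots,0)$ with $\lambda_1^1<1$), so the order of $x_{d+1}\circ\gamma$ can be strictly smaller than $\langle\nu,\lambda_1\rangle$ and tells you nothing about it. More importantly, the crux you leave to ``inspecting the higher coefficients'' is precisely the nontrivial point: one must show that $\langle\nu,\lambda_{n(k)}\rangle$ (hence the identity of $\lambda_{n(k)}$) is readable from the common $s$-jet. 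This does not follow from the agreement of the coordinate orders $\langle\nu,e_i\rangle\leqslant s$, because $\ell_\nu^s$ need not be spanned by those $e_i$ alone — in the example following Lemma \ref{lema-obs} one has $e_2\in\ell_\nu^s$ only through $\lambda_1$, while $\langle\nu,e_2\rangle>s$ — so the coordinate data of the jet do not pin down $\nu_{|\ell_\nu^s}$. The paper supplies the missing device: setting $\ell=\mathsf{span}_\Q\{e_i\mid 1\leqslant i\leqslant d,\ \langle\nu,e_i\rangle\leqslant s\}$ and $x_{d+1}'=\sum_{\lambda\in\ell,\ \lambda_{n(k)}\nleq\lambda}\beta_\lambda X^\lambda$, it shows via Lemma \ref{vectorQO} that $j_s(x_{d+1}'\circ h)=j_s(x_{d+1}'\circ h')$, and then recovers $\langle\nu,\lambda_{n(k)}\rangle$ as the order of $j_s((x_{d+1}-x_{d+1}')\circ h)$, whence $\lambda_{n(k)}=\lambda_{n'(k)}$ and (ii). Your argument needs this subtraction step (or an equivalent), and in addition your reconstruction of the faces and of $\ord_{\mathcal{J}_j}(\nu)$ from $(\ell_\nu^s,\nu_{|\ell_\nu^s})$ is exactly the implication (ii)$\Rightarrow$(i), which you may use but should justify (it requires Lemma \ref{c1}, Definition \ref{Ak} and Lemma \ref{lema-obs} to see that the minimizers of $\mathcal{J}_j$, $j\leqslant k$, only involve vectors of $\ell_\nu^s$); as written, the reconstruction claim is unsupported precisely where the quasi-ordinary case differs from the toric one.
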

\begin{proof} The equivalence between (i) and (ii) follows by
Lemma \ref{crucial}, Definition \ref{Ak} and the definition of the
equivalence relation.

If (ii) holds, then the vectors obtained in the algorithm in
Proposition \ref{PruebaAlgQO} coincides for both $\nu$ and $\nu'$.
It follows that the indices $m(k)$, $n(k)$ and $p(k)$ are the same
for $\nu$ and $\nu'$. These facts imply that the isomorphic
parametrizations (\ref{etapar}) corresponding to
 $(\nu,s),(\nu',s) \in A_k$ coincide. Hence $j_s(H_\nu^*)=j_s(H_{\nu'}^*)$ and (iii) holds.

If (iv) holds we prove that (ii) holds. There exists $h\in
H_\nu^*$ and $h'\in H_{\nu'}^*$ such that $j_s(h)=j_s(h')$. Then
for any $i\in\{1,\ldots,d+1\}$ the inequality
$\langle\nu,e_i\rangle\leqslant s$ implies that
$\langle\nu,e_i\rangle=\langle\nu',e_i\rangle$. We denote by
$n(k)$ and $m(k)$ (resp. $n'(k)$ and $m'(k)$)
 the integers associated to $(\nu,s)$ (resp. $(\nu',s)$) in Proposition
 \ref{PruebaAlgQO}. If $n(k) =0$ then the assertion follows.
If $n(k) >0$ we set $\ell=\mbox{{\rm span}}_\Q \{ e_i \mid
1\leqslant i \leqslant d, \langle \nu, e_i \rangle \leqslant s \}$
and $x_{d+1}'=\sum_{\l_{n(k)}\nleqslant\l,\l\in\ell}\beta_\l
X^\l$. We have that $\nu_{| \ell}=\nu'_{| \ell}$. If $\beta_\l\neq
0$ appears in $x_{d+1}'$ and $\l=\sum_{i=1}^d a_i e_i$ then
$\langle\nu,\l\rangle\geqslant\langle\nu,e_i\rangle$ whenever
$a_i\neq 0$ (this is consequence of Lemma \ref{crucial}). In
addition, if $\langle\nu,\l\rangle\leqslant s$ then it follows by
Lemma \ref{vectorQO} that
$u_r(\l)\in\Q[u_1(e_i),\ldots,u_{s-\langle\nu,e_i\rangle}(e_i)]_{1\leqslant
i\leqslant d}^{\langle\nu,e_i\rangle\leqslant s}$ for
$r=1,\ldots,s-\langle\nu,\l\rangle$. Then it is easy to see that
the initial coefficients of $X^{\l_j}\circ h$ and $X^{\l_j}\circ
h'$ coincide for $i=1,\ldots,n(k)-1$. We deduce that
$j_s(x_{d+1}'\circ h)=j_s(x_{d+1}'\circ h')$ hence
$j_s ( (x_{d+1}  -  x_{d+1}') \circ h)
$ is of order $\langle \nu, \l_{n(k)} \rangle$. Since
$j_s ( (x_{d+1}  - x_{d+1}') \circ h) = j_s ( (x_{d+1}  -
x_{d+1}') \circ h')$ we deduce that $\langle \nu, \l_{n(k)}
\rangle = \langle \nu', \l_{n(k)} \rangle$ hence $\l_{n(k)} =
\l_{n'(k)}$ and (ii) holds.
\end{proof}

\begin{Not} \label{auxiQO}
The cone $\hat{\s}: = \s \times \R_{\geqslant 0}$ is rational for
the lattice $\hat{N}: = N \times \Z$.
\begin{enumerate}
\item[(i)]  If $\t \subset \s$ and $1 \leqslant k \leqslant d $ we
set \[\t(k) := \{ (\nu, s) \mid \nu \in \stackrel{\circ}{\t} \cap
\stackrel{\circ}{\s} \mbox{ and } \f_k( \nu) \leqslant s <
\f_{k+1} (\nu) \}.\]

\item[(ii)] If $\t \in \cap _{i=1}^k \Sigma_i$ then we set $A_{k,
\t}: = \t(k) \cap \hat{N}$.
\end{enumerate}
\end{Not}

\begin{remark} \label{pwlQO}
If $\t$ is a cone contained in a cone of the fan $\cap_{i=1}^k
\Sigma_i$ then we have that if  $\t(k) \ne \emptyset$, then the
closure of $\t(k)$ in $\hat{\s}$ is  a convex polyhedral cone,
rational for the lattice $\hat{N}$ (since in this case the
functions $\ord_{\J_1}, \dots. \ord_{\J_k}$ , hence also $\f_1,
\dots, \f_k$ , are linear on $\t$  and the function
$\ord_{\J_{k+1}}$, hence also $\f_{k+1}$, is piece-wise linear and
convex on $\t$). The set $A_{k, \t}$ may be empty, for instance,
if $\t$ is contained in the boundary of $\s$ or if for all $\nu$
in the interior of $\t$ we have that $\f_k(\nu) = \f_{k+1} (\nu)$.
\end{remark}

\begin{remark} \label{86QO}
$\,$
\begin{enumerate}
\item[(i)] For $1 \leqslant k \leqslant d$ we have that  $A_{k} =
{\bigsqcup_{\t \in \cap _{i=1}^k \Sigma_i}} A_{k, \t}$.

\item[(ii)] The vectors  $(\nu,s), (\nu',s) \in A_k$ are
equivalent by relation $\sim$ in (\ref{equivalence-bis}) if and
only if there exists a cone $\t \in  \cap _{i=1}^k \Sigma_i$ such
that $\nu$ and $\nu'$ belong to the relative interior of $\t$ and
we have that $\f_i (\nu) = \f_i (\nu')$, for $i= 1, \dots, k$.

\item[(iii)]  It follows that $A_k /_{\sim} = {\bigsqcup_{\t \in
\cap _{i=1}^k \Sigma_i}} A_{k, \t} /_{\sim}$, where  $A_{k, \t}
/_{\sim}$ is the set of equivalent classes of elements in the set
$A_{k, \t}$ by relation (\ref{equivalence-bis}).
\end{enumerate}
\end{remark}

\begin{Pro}      {\rm (cf.~Prop. 8.7 \cite{CoGP})}
If $\nu \in \stackrel{\circ}{\s}$, $s \geqslant 1$  and $\theta
\leqslant \s$ then the following relations are equivalent:
\begin{enumerate}

 \item[(i)] $j_s(H_{\nu}^*)\cap j_s(H_{S_\theta}) \ne \emptyset$,

\item[(ii)] $j_s(H_{ \nu}^*)\subset
j_s(H_{S_\theta})$,

\item[(iii)] $\ell_\nu^s \subset\theta^{\perp}$.
\end{enumerate}
\label{thetapropositionQO}
\end{Pro}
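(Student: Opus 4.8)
The plan is to prove the cycle (ii) $\Rightarrow$ (i) $\Rightarrow$ (iii) $\Rightarrow$ (ii). First, (ii) $\Rightarrow$ (i) is immediate: since $\nu\in\stackrel{\circ}{\s}\cap N$ and $s\geqslant 1$, Lemma \ref{c1} (with $\f_0=0$, $\f_{d+1}=\infty$) gives a unique $0\leqslant k\leqslant d$ with $(\nu,s)\in A_k$, and by Theorem \ref{keyQO} the set $j_s(H^*_\nu)$ is a nonempty locally closed subset of the jet space of $S$; hence if it is contained in $j_s(H_{S_\theta})$ it meets it.

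For (i) $\Rightarrow$ (iii), I would use the parametrization (\ref{fam}) of $H^*_{S,\nu}$ by the arc space of $T_N$, under which $x_i\mapsto t^{\langle\nu,e_i\rangle}c(e_i)u(x_i)$ with $c(e_i)$ a unit; so for any $h\in H^*_\nu$ the series $x_i\circ h$ has $t$-order exactly $\langle\nu,e_i\rangle$ for $1\leqslant i\leqslant d$, while $x_{d+1}\circ h$ has $t$-order $\ord_{\J_1}(\nu)$. Take $h\in H^*_\nu$ and $h'\in H_{S_\theta}$ with $j_s(h)=j_s(h')$. If $e_j\notin\theta^\perp$ with $1\leqslant j\leqslant d$ then $x_j\circ h'=0$, hence $x_j\circ h\equiv 0\bmod t^{s+1}$, so $\langle\nu,e_j\rangle>s$; thus $\langle\nu,e_j\rangle\leqslant s$ forces $e_j\in\theta^\perp$. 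By Lemma \ref{crucial}, $\ell^s_\nu=\ell_k(\nu)$ is spanned by some $e_j$ with $\langle\nu,e_j\rangle\leqslant s$ together with the characteristic exponents occurring in the expansion of $w_k$; for such an exponent $\l_i$ one has $\beta_{\l_i}\neq 0$ (Lemma \ref{expo}) and $X^{\l_i}$ contributes to $j_s(x_{d+1}\circ h)$ at order $\langle\nu,\l_i\rangle\leqslant s$, so --- by the last assertion of Lemma \ref{crucial} and Remark \ref{nk} --- no index $j$ in the coordinate support of $\l_i$ can lie outside $\theta^\perp$ without breaking $j_s(h)=j_s(h')$. Hence the coordinate support of $\ell^s_\nu$ lies in $\theta^\perp$, i.e. $\ell^s_\nu\subset\theta^\perp$.

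The substantive implication is (iii) $\Rightarrow$ (ii). Assume $\ell^s_\nu\subset\theta^\perp$, and let $\nu_\theta$ be the element of $N(\theta,\z)$ obtained by restricting $\nu$ to $M(\theta,\z)\subseteq M\cap\theta^\perp$ (it lies in the relative interior of $\s/\theta\R$). I would show $j_s(H^*_{S,\nu})\subseteq j_s(H^*_{S_\theta,\nu_\theta})\subseteq j_s(H_{S_\theta})$ inside the jet space of $S$, where $S_\theta\subset S$ via $x_j=0$ for $e_j\notin\theta^\perp$ and $H^*_{S_\theta,\nu_\theta}$ is the analogous orbit for the q.o.~hypersurface $S_\theta$; this gives (ii). Given $h\in H^*_\nu$ with lift $\bar h\in H^*_{\bar S,\nu}$, the hypothesis forces $\langle\nu,e_j\rangle>s$ for $e_j\notin\theta^\perp$, so those coordinates truncate to $0$ in $j_s(h)$, and one reconstructs $h'=n_{\theta}\circ\bar h'$ on $S_\theta$ by prescribing $X^m\circ\bar h'$, $m\in\s^\vee\cap M\cap\theta^\perp$, to agree with $X^m\circ\bar h$ in the coefficients of $t$-order $\leqslant s$ but choosing the higher coefficients (and the free coefficients of the $x_j$ with $e_j\notin\theta^\perp$) so that $j_s(\z_\theta\circ\bar h')=j_s(\z\circ\bar h)$. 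I expect this last matching to be the main obstacle: a monomial $\beta_\l X^\l$ of $\z$ with $\l\notin\theta^\perp$ but $\langle\nu,\l\rangle\leqslant s$ does contribute to $j_s(x_{d+1}\circ h)$, and one must verify its contribution can be absorbed. I would do so by the analysis used in the proof of Theorem \ref{keyQO}: expand $u_r(x_{d+1})$ as in Propositions \ref{ur} and \ref{ur2}, use the algebraic independence of Proposition \ref{alg-indQO} and Lemma \ref{Lemaprekey}, and bound the orders with the convexity inequalities of Lemmas \ref{c1} and \ref{ap3Crucial} --- this also yields that the data $k$, $\ord_{\J_k}(\nu)$, $n(k)$, $m(k)$, $p(k)$ for $S$ at $\nu$ coincide with those for $S_\theta$ at $\nu_\theta$, the characteristic exponents of $\z_\theta$ relevant below order $s$ being exactly $\l_1,\dots,\l_{t(k)-1}\in\ell^s_\nu\subset\theta^\perp$. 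Theorem \ref{keyQO} applied to $S$ and to $S_\theta$ then gives both jet spaces the same description, and the explicit parametrizations identify $j_s(H^*_\nu)$ with a subset of $j_s(H^*_{S_\theta,\nu_\theta})$, whence (ii).
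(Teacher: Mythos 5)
Your route is the same as the paper's: (ii)$\Rightarrow$(i) is the nonemptiness of $j_s(H^*_\nu)$, (i)$\Rightarrow$(iii) is obtained from the vanishing on $H_{S_\theta}$ of the coordinates $x_j$ with $e_j\notin\theta^\perp$ together with Lemma \ref{crucial}, and (iii)$\Rightarrow$(ii) is obtained by restricting $\nu$ to $\theta^\perp$, noting that $\nu':=\nu_{|\theta^\perp}\in N(\theta,\z)$ lies in the interior of $\s/\theta\R$, that $\ell^s_\nu=\ell^s_{\nu'}$ with $\nu_{|\ell^s_\nu}=\nu'_{|\ell^s_{\nu'}}$, and then re-running the parametrization argument of Theorem \ref{keyQO}; this is exactly the paper's proof, which at that point simply refers back to the argument of Proposition \ref{Prop-equivQO}.

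There is, however, one step in your (i)$\Rightarrow$(iii) whose stated justification does not suffice. From $j_s(h)=j_s(h')$ you correctly get $e_j\in\theta^\perp$ for every coordinate vector with $\langle\nu,e_j\rangle\leqslant s$, and the last assertion of Lemma \ref{crucial} then handles the exponents $\l_j$ with $j<n(k)$, whose supports consist of such coordinate vectors. But the remaining generator $\l_{n(k)}$ of $\ell^s_\nu=\ell_k(\nu)$ may have $e_{m(k)}$ in its support with $\langle\nu,e_{m(k)}\rangle>s$: in the paper's own example after Lemma \ref{lema-obs} ($\l_1=(\frac12,\frac18,0)$, $\l_2=(\frac12,\frac18,\frac1{18})$, $\nu=(12,16,18)$, $12\leqslant s<16$) one has $\ell^s_\nu=\mathsf{span}_\Q(e_1,\l_1)\ni e_2$ while $\langle\nu,e_2\rangle=16>s$, and neither the last assertion of Lemma \ref{crucial} (stated only for $j<n(k)$) nor Remark \ref{nk} (which only identifies $q_{\l_{n(k)}}=m(k)$) gives $e_2\in\theta^\perp$. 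Your sentence ``no index in the support of $\l_i$ can lie outside $\theta^\perp$ without breaking $j_s(h)=j_s(h')$'' is the statement to be proved, not a consequence of the cited lemmas: one must exclude that the order-$\langle\nu,\l_{n(k)}\rangle$ part of $x_{d+1}\circ h$ is reproduced by $\z_\theta\circ\bar h'$, all of whose monomials lie in $\theta^\perp$. The missing argument is the subtraction and order comparison used in the proof of (iv)$\Rightarrow$(ii) of Proposition \ref{Prop-equivQO}: with $\ell=\mathsf{span}_\Q\{e_i\mid 1\leqslant i\leqslant d,\ \langle\nu,e_i\rangle\leqslant s\}$ and $x'_{d+1}=\sum_{\l_{n(k)}\nleqslant\l,\ \l\in\ell}\beta_\l X^\l$ one checks $j_s(x'_{d+1}\circ h)=j_s(x'_{d+1}\circ h')$, so $j_s((x_{d+1}-x'_{d+1})\circ h')$ has order $\langle\nu,\l_{n(k)}\rangle\leqslant s$, and since the contributing monomials of $x_{d+1}-x'_{d+1}$ lying in $\theta^\perp$ are $\geqslant\l_{n(k)}$ and $\theta^\perp\cap\s^\vee$ is a face of $\s^\vee$, this forces $\l_{n(k)}\in\theta^\perp$. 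With this inserted your proof coincides with the paper's; your plan for (iii)$\Rightarrow$(ii) (restriction to $N(\theta,\z)$, Proposition \ref{ur2} to see that all monomials entering the truncated expansion lie in $\ell^s_\nu\cap M_{p}\subset\theta^\perp$, and the identification of the parametrizations) is the intended argument. A minor slip: the $t$-order of $x_{d+1}\circ h$ is $\langle\nu,\l_1\rangle$, not $\ord_{\J_1}(\nu)$ in general, though this plays no role in your argument.
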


\begin{proof}If (i) holds, there is an arc $h\in H_\nu^*$ with
$j_s (h) \in j_s(H_{S_\theta})$. Then if $j_s(X^{e_i}\circ h) \ne
0 $ the vector $e_i$ belongs to $\theta^\perp$. By Lemma
\ref{crucial} those vectors generate the subspace $\ell_\nu^s$ and
hence (iii) holds.

Suppose that (iii) holds. The vector $\nu':=\nu_{|\theta^\perp}$
belongs to the lattice  $N(\theta,\z)$ and it is in the interior
of the cone $\s/\theta\R$ (see Notation \ref{notlattQO}). By
definition we have that $\ell_\nu^s = \ell_{\nu'}^s$ and $\nu_{|
\ell_{\nu}^s} = \nu'_{| \ell_{\nu'}^s}$. Then we continue as in
the proof of (iv) $\Rightarrow$ (ii) in Proposition
\ref{Prop-equivQO}. \end{proof}

\begin{Pro} \label{newQO}
If $1\leqslant k \leqslant d$ and $(\nu, s) \in A_k$ then the
following assertions are equivalent:
\begin{enumerate}
\item[(i)] The intersection $j_s(H^*_{\nu}) \, \cap \, (
\bigcup_{0 \ne \theta \leqslant \s} j_s (H_{S_\theta}))$ is empty.

\item[(ii)]  The face ${\mathcal F}_\nu$ of the polyhedron $\mathcal{N}
(\J_k) $  determined by $\nu$ is contained in the interior of  $\s^\vee$.
\end{enumerate}
\end{Pro}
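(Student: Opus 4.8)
The plan is to translate both (i) and (ii) into the single condition that the subspace $\ell_\nu^s$ is not contained in any proper coordinate subspace of $M_\Q$, using Proposition \ref{thetapropositionQO} for (i) and an elementary description of the face $\mathcal{F}_\nu$ for (ii).

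First I would handle (i). Since $(\nu,s)\in A_k$ with $k\geqslant 1$ forces $\nu\in\stackrel\circ\s\cap N$ and $s\geqslant 1$, Proposition \ref{thetapropositionQO} applies; distributing the intersection in (i) over the finite union shows that (i) \emph{fails} if and only if there is a face $0\ne\theta\leqslant\s$ with $j_s(H^*_\nu)\cap j_s(H_{S_\theta})\ne\emptyset$, equivalently (by that Proposition) with $\ell_\nu^s\subset\theta^\perp$. Now $\s$ is the regular cone spanned by the dual basis of $e_1,\dots,e_d$, so its faces are the cones generated by subsets of that dual basis, and as $\theta$ runs over the faces of $\s$ the subspace $\theta^\perp$ runs over all coordinate subspaces $\mathsf{span}_\Q\{e_j\mid j\notin I\}$, this being a proper subspace exactly when $\theta\ne 0$. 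Hence (i) fails if and only if $\ell_\nu^s\subset\{x_i=0\}$ for some $1\leqslant i\leqslant d$.

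Next I would describe $\mathcal{F}_\nu$ concretely. Since $\nu$ lies in the interior of $\s$ one has $\langle\nu,u\rangle>0$ for every nonzero $u\in\s^\vee$, so on $\mathcal{N}(\J_k)=\mathrm{conv}\bigl(\bigcup_{w\in\J_k}(w+\s^\vee)\bigr)$ the form $\langle\nu,-\rangle$ attains its minimum $\mathsf{ord}_{\J_k}(\nu)$ only at lattice points of $\J_k$, and $\mathcal{F}_\nu=\mathrm{conv}(V)$ where $V:=\{\,w\in\J_k\mid\langle\nu,w\rangle=\mathsf{ord}_{\J_k}(\nu)\,\}$ is a nonempty finite set. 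Because $V\subset\mathcal{F}_\nu\subset\mathcal{N}(\J_k)\subset\s^\vee=\R^d_{\geqslant 0}$ and a convex combination of vectors in $\R^d_{>0}$ stays in $\R^d_{>0}$, condition (ii) — namely $\mathcal{F}_\nu\subset\mathsf{int}(\s^\vee)=\R^d_{>0}$ — holds if and only if every $w\in V$ has all coordinates positive; equivalently, (ii) fails if and only if some $w\in V$ has $w_i=0$ for some $1\leqslant i\leqslant d$.

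It then remains to match these two negations, and this is where Lemma \ref{crucial} does the work: for any $w=e_{r_1}+\cdots+e_{r_k}\in V$ one has $\ell_\nu^s=\mathsf{span}_\Q\{e_{r_1},\dots,e_{r_k}\}$. If such a $w$ has $w_i=0$, then, $w$ being a sum of the vectors $e_{r_l}$ all lying in $\s^\vee=\R^d_{\geqslant 0}$, each $e_{r_l}$ has vanishing $i$-th coordinate, so $\ell_\nu^s\subset\{x_i=0\}$; conversely, if $\ell_\nu^s\subset\{x_i=0\}$, then every $w\in V$, belonging to $\mathsf{span}_\Q\{e_{r_1},\dots,e_{r_k}\}=\ell_\nu^s$, satisfies $w_i=0$. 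Chaining the three equivalences yields (i) $\Leftrightarrow$ (ii). I expect the only genuinely substantive point — beyond the bookkeeping with the faces of $\s$ — to be the identification $\mathcal{F}_\nu=\mathrm{conv}(V)$ with $V\subset\R^d_{\geqslant 0}$, since it is precisely what reduces a statement about the whole face to a statement about individual generators of $\J_k$ and thereby makes Lemma \ref{crucial} applicable.
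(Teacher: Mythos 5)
Your proof is correct, and it supplies exactly the argument the paper itself omits (it only cites the toric-case proof in \cite{CoGP}): reducing both (i) and (ii) to the condition that $\ell_\nu^s$ lies in no coordinate hyperplane, via Proposition \ref{thetapropositionQO} together with the description of the faces of the regular cone $\s$ on one side, and the identification of $\mathcal{F}_\nu$ with the convex hull of the minimizing exponents of $\J_k$ plus Lemma \ref{crucial} on the other. This is essentially the intended route, so nothing further is needed.
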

\noindent{\em  Proof.}  The proof coincides with that of
Proposition 8.8 \cite{CoGP}. \hfill $\ {\Box}$

\begin{definition} \label{SkQO}       {\rm (cf.~Def. 8.9 \cite{CoGP})}
If $1 \leqslant k \leqslant d$ we define the set  $\mathcal{D}  _k
$ as the subset of cones $\t \in \bigcap_{i= 1}^k \Sigma_i$ such
that the face $\mathcal{F}_\t$ of $\mathcal{N} (\J_k)$ is
contained in the interior of $\s^\vee$.
\end{definition}

\begin{remark} \label{ddQO}
Notice that $\mathcal{D}_d  =\bigcap_{i= 1}^d \Sigma_i$. If $\t
\in \mathcal{D}_d$, the set $\t(d)$ is non-empty if and only if
$\stackrel{\circ}{\t}  \subset \stackrel{\circ}{\s}$.
\end{remark}

As a consequence of the results of this Section we have the
following Propositions:
\begin{Pro} \label{29QO}          {\rm (cf.~Prop. 8.11 \cite{CoGP})}
Let us fix an integer $s_0  \geqslant 1$.
   We have the following partition as union of locally  closed subsets:
\begin{equation} \label{trunc-sQO}
j_{s_0}(H_S^*)\setminus\displaystyle\bigcup_{ 0
\neq\theta\leqslant \s}
j_{s_0}(H_{S_\theta})=\displaystyle\bigsqcup_{k=1}^d
\bigsqcup_{\t\in\mathcal{D}_k} \bigsqcup_{[ (\nu, s_0) ] \in A_{k,
\t} /_{\sim}}  j_{s_0} (H_{\nu}^*).
\end{equation}
\end{Pro}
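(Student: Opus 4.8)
The plan is to combine the orbit decomposition $H^*_S=\bigsqcup_{\nu\in\stackrel{\circ}{\s}\cap N}H^*_{S,\nu}$ of Section~\ref{ArcsAndJetsQO}, which gives $j_{s_0}(H^*_S)=\bigcup_{\nu\in\stackrel{\circ}{\s}\cap N}j_{s_0}(H^*_\nu)$, with the stratification of $\stackrel{\circ}{\s}\cap N$ by the position of $s_0$ relative to the piece-wise linear functions $\f_1,\dots,\f_d$. First I would observe that, since $0=\f_0\leqslant\f_1(\nu)\leqslant\cdots\leqslant\f_d(\nu)<\f_{d+1}(\nu)=\infty$ by Lemma~\ref{c1} and $s_0\geqslant 1$, every $\nu\in\stackrel{\circ}{\s}\cap N$ satisfies $(\nu,s_0)\in A_{k(\nu)}$ for a unique $k(\nu)\in\{0,\dots,d\}$ (Definition~\ref{Ak}). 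When $k(\nu)=0$, Theorem~\ref{keyQO} gives $j_{s_0}(H^*_\nu)=\{0\}$; since $\dim\s=d\geqslant 1$ the cone $\s$ is a nonzero face of $\s$ and $S_\s=\{0\}$, so $\{0\}=j_{s_0}(H_{S_\s})\subset\bigcup_{0\neq\theta\leqslant\s}j_{s_0}(H_{S_\theta})$, and these $\nu$ contribute nothing to the left-hand side.

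Next, for $\nu$ with $k=k(\nu)\geqslant 1$ I would apply Proposition~\ref{thetapropositionQO}: for each face $\theta\leqslant\s$ the set $j_{s_0}(H^*_\nu)$ is contained in $j_{s_0}(H_{S_\theta})$ (precisely when $\ell^{s_0}_\nu\subset\theta^\perp$) or is disjoint from it; hence $j_{s_0}(H^*_\nu)\setminus\bigcup_{0\neq\theta\leqslant\s}j_{s_0}(H_{S_\theta})$ is either empty or equals $j_{s_0}(H^*_\nu)$. By Proposition~\ref{newQO} the second alternative occurs exactly when the face $\mathcal{F}_\nu$ of $\mathcal{N}(\J_k)$ determined by $\nu$ lies in the interior of $\s^\vee$. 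Writing $\t=\t(\nu)$ for the unique cone of $\bigcap_{i=1}^k\Sigma_i$ with $\nu\in\stackrel{\circ}{\t}$, we have $\mathcal{F}_\nu=\mathcal{F}_\t$ (the relative interior of $\t$ lies in the relative interior of a cone of $\Sigma_k$), so that alternative reads $\t\in\mathcal{D}_k$ (Definition~\ref{SkQO}), and then $(\nu,s_0)\in\t(k)\cap\hat N=A_{k,\t}$ (Notation~\ref{auxiQO}). Running $\nu$ over $\stackrel{\circ}{\s}\cap N$ therefore yields
\[
j_{s_0}(H_S^*)\setminus\bigcup_{0\neq\theta\leqslant\s}j_{s_0}(H_{S_\theta})
=\bigcup_{k=1}^d\ \bigcup_{\t\in\mathcal{D}_k}\ \bigcup_{(\nu,s_0)\in A_{k,\t}}j_{s_0}(H^*_\nu),
\]
and each $j_{s_0}(H^*_\nu)$ occurring here is locally closed in $H_{s_0}(S)$ by Theorem~\ref{keyQO}.

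It remains to check that this is a disjoint union once $\sim$-equivalent indices are collapsed, i.e.\ that for distinct classes $[(\nu,s_0)]$, $[(\nu',s_0)]$ the sets $j_{s_0}(H^*_\nu)$, $j_{s_0}(H^*_{\nu'})$ are disjoint, while within one class they coincide. For indices lying in a common $A_k$ --- in particular for two cones $\t\neq\t'$ with the same $k$, using the description of $\sim$ in Remark~\ref{86QO}(ii) --- this is precisely Proposition~\ref{Prop-equivQO}, equivalences (i)$\Leftrightarrow$(iii)$\Leftrightarrow$(iv). The remaining case, indices $(\nu,s_0)\in A_k$ and $(\nu',s_0)\in A_{k'}$ with $k\neq k'$, I would handle by repeating the argument of the implication (iv)$\Rightarrow$(ii) in Proposition~\ref{Prop-equivQO}: a common $s_0$-jet forces $\{1\leqslant i\leqslant d\mid\langle\nu,e_i\rangle\leqslant s_0\}=\{1\leqslant i\leqslant d\mid\langle\nu',e_i\rangle\leqslant s_0\}$ with $\nu$ and $\nu'$ agreeing on this set, and then comparing the initial coefficients of $x_{d+1}$ gives $\ell^{s_0}_\nu=\ell^{s_0}_{\nu'}$; since $\dim\ell^{s_0}_\nu=k$ by Lemma~\ref{crucial}, this forces $k=k'$, reducing to the previous case. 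I expect this last point --- extending Proposition~\ref{Prop-equivQO} to indices that a priori sit in different strata $A_k$ --- to be the only genuine obstacle; everything else is a matter of assembling results already established in Sections~\ref{conv-newtonQO}--\ref{GenptOrbitsQO}.
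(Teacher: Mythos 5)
Your proof is correct and follows the route the paper intends: the proposition is stated there as a direct consequence of the results already established (Theorem \ref{keyQO}, Lemma \ref{crucial}, Propositions \ref{Prop-equivQO}, \ref{thetapropositionQO} and \ref{newQO}, together with Remark \ref{86QO}, cf.~Prop.~8.11 of \cite{CoGP}), and your assembly of these is exactly that argument. Your additional step proving disjointness of $j_{s_0}(H^*_\nu)$ and $j_{s_0}(H^*_{\nu'})$ when $(\nu,s_0)\in A_k$ and $(\nu',s_0)\in A_{k'}$ with $k\neq k'$ --- rerunning the (iv)$\Rightarrow$(ii) argument of Proposition \ref{Prop-equivQO} and invoking $\dim \ell^{s_0}_\nu = k$ from Lemma \ref{crucial} --- correctly fills in the one detail the paper leaves implicit.
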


If $s_0 \geqslant 1$ the coefficient of $T^{s_0}$ in the auxiliary
series $P(S)$ is obtained by taking classes in the Grothendieck
ring in (\ref{trunc-sQO}), and then using Theorem \ref{keyQO}.

\begin{Pro}              {\rm (cf.~Prop. 8.12 \cite{CoGP})}
\label{mk} If $\t \in \mathcal{D}  _k $ we set
\[ P_{k,\t}(S)=
(\L-1)^k \sum_{s\geqslant 1}   \sum_{[ (\nu, s_0) ] \in A_{k, \t}
/_{\sim}} \L^{sk-\ord_{\J _k}(\nu)}T^s.\]
  We have that $ P(S) =
\sum_{k=1}^d \sum_{\t \in \mathcal{D}_k} P_{k,\t}(S). $
\end{Pro}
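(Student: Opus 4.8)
The plan is to read off the coefficient of each power of $T$ in $P(S)$ from the partition of Proposition \ref{29QO}, pass to classes in the Grothendieck ring, evaluate the resulting pieces by Theorem \ref{keyQO}, and then regroup the double sum.

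First I would dispose of the constant term. For any germ the $0$-jet of an arc through the origin is the origin itself, so $j_0(H_S)=\{0\}=j_0(H_{S_\theta})$ for every $\theta\leqslant\s$; since $d\geqslant 1$ there is a proper face $0\ne\theta\leqslant\s$, whence the coefficient of $T^0$ in $P(S)$ is $[\emptyset]=0$, in agreement with the fact that every $P_{k,\t}(S)$ has zero constant term. It remains to compare the coefficients of $T^s$ for $s\geqslant 1$. Using the partition $H_S=\bigsqcup_{\theta\leqslant\s}H_{S_\theta}^*$, the equality $S_0=S$, and the inclusions $H_{S_\theta}^*\subset H_{S_\theta}$ for $\theta\ne 0$, one gets
\[
j_s(H_S)\setminus\bigcup_{0\ne\theta\leqslant\s}j_s(H_{S_\theta})\;=\;j_s(H_S^*)\setminus\bigcup_{0\ne\theta\leqslant\s}j_s(H_{S_\theta}),
\]
to which Proposition \ref{29QO} applies, exhibiting this constructible set as a \emph{finite} disjoint union of the locally closed subsets $j_s(H^*_\nu)$, indexed by $1\leqslant k\leqslant d$, $\t\in\mathcal{D}_k$ and $[(\nu,s)]\in A_{k,\t}/_{\sim}$; finiteness is Remark \ref{remkQO}(i), and the indexing by equivalence classes is legitimate because $j_s(H^*_\nu)$ depends only on $[(\nu,s)]$ by Proposition \ref{Prop-equivQO}. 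Taking classes in $K_0(\Var_\C)$ and using additivity of $[\,\cdot\,]$ over a finite partition into locally closed (hence constructible) subsets, the coefficient of $T^s$ in $P(S)$ equals $\sum_{k=1}^d\sum_{\t\in\mathcal{D}_k}\sum_{[(\nu,s)]\in A_{k,\t}/_{\sim}}[j_s(H^*_\nu)]$.

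Next I would evaluate each class: for $(\nu,s)\in A_k$ with $1\leqslant k\leqslant d$, Theorem \ref{keyQO} gives $j_s(H^*_\nu)\cong(\C^*)^k\times\A_\C^{\,sk-\ord_{\J_k}(\nu)}$, hence $[j_s(H^*_\nu)]=(\L-1)^k\L^{\,sk-\ord_{\J_k}(\nu)}$; and $\ord_{\J_k}(\nu)$ depends only on $[(\nu,s)]$ by the very definition of $\sim$, so this monomial is well defined on $A_{k,\t}/_{\sim}$. Substituting, summing over $s\geqslant 1$, and interchanging the summations --- legitimate since only finitely many classes contribute to each fixed power of $T$ --- yields
\[
P(S)\;=\;\sum_{k=1}^d\sum_{\t\in\mathcal{D}_k}(\L-1)^k\sum_{s\geqslant 1}\sum_{[(\nu,s)]\in A_{k,\t}/_{\sim}}\L^{\,sk-\ord_{\J_k}(\nu)}T^s\;=\;\sum_{k=1}^d\sum_{\t\in\mathcal{D}_k}P_{k,\t}(S),
\]
which is the assertion.

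The substantive content is entirely carried by Propositions \ref{29QO} and \ref{Prop-equivQO} and by Theorem \ref{keyQO}; the rest is bookkeeping, and I expect the only place a difficulty could hide is the finiteness of the partition of Proposition \ref{29QO} for each fixed $s$ --- this is precisely what lets one apply additivity of the class map term by term and rearrange $\sum_s\sum_{k,\t}$ into $\sum_{k,\t}\sum_s$, and it is supplied by Remark \ref{remkQO}(i).
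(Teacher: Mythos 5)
Your argument is correct and is exactly the route the paper intends: take classes in the Grothendieck ring of the finite partition of Proposition \ref{29QO} (valid on equivalence classes by Proposition \ref{Prop-equivQO} and Remark \ref{remkQO}), evaluate each piece via Theorem \ref{keyQO} as $(\L-1)^k\L^{sk-\ord_{\J_k}(\nu)}$, and regroup the sums. Your extra bookkeeping (the vanishing constant term and the reduction of $j_s(H_S)$ to $j_s(H_S^*)$ via the partition $H_S=\bigsqcup_{\theta\leqslant\s}H^*_{S_\theta}$) only makes explicit what the paper leaves implicit, so there is nothing to add.
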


The main results on the geometric motivic Poincar\'e series are
based on Theorem \ref{keyQO} and Proposition \ref{29QO}.
The proofs of the main results of this Section
follow by applying the method introduced in the toric case \cite{CoGP}.

Recall that if $\r\in\cap\Sigma_j$ is of dimension one we
denote by $\nu_\r$ the integral primitive vector in $N$.

\begin{Pro}  {\rm (cf.~Prop. 9.5 \cite{CoGP})} 
 If $1\leqslant k\leqslant d-1$ then the rational form of the
series $P_{k,\t}(S)$ is of the form:
\begin{equation} \label{P-jQO}
 \frac{  Q_{S, k, \t}}{\prod_{\r \leqslant
\t}^{\dim \r =1 }  ( 1 - \L^{\Psi_k (\nu_\r)} T^{\phi_k
(\nu_\r)})\prod_{\r\in\Sigma_{k+1},\r\subset\t, }^{\dim\r=1,
\phi_{k+1} (\nu_\r) \ne \phi_{k} (
\nu_\r)}(1-\L^{\Psi_{k+1}(\nu_\r)}T^{\phi_{k+1}(\nu_\r)})},
\end{equation}
for some $Q_{S,k, \t} \in \Z[\L, T]$. If $k=d$ then equation
(\ref{P-jQO}) holds by replacing in the denominator the term
$\prod_{\r\in\Sigma_{k+1},\r\subset\t}^{\dim\r=1}(1-\L^{\Psi_{k+1}(\nu_\r)}T^{\phi_{k+1}(\nu_\r)})$
by $1-\L^dT$. Both numerator and denominator of (\ref{P-jQO}) are
determined by the lattice $M$ and the Newton polyhedra of the
logarithmic jacobian ideals.
 \label{ratP_jQO}
\end{Pro}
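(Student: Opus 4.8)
\emph{Plan.} The argument follows the method of \cite{CoGP} (Proposition 9.5): one rewrites $P_{k,\t}(S)$ as a generating series of lattice points in rational polyhedral cones and then invokes the combinatorial lemmas on such series. Throughout we may assume $\stackrel{\circ}{\t}\subset\stackrel{\circ}{\s}$, since otherwise $\t(k)=\emptyset$ and $P_{k,\t}(S)=0$.

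Since $\t$ lies in a cone of $\bigcap_{i=1}^k\Sigma_i$, the functions $\f_1,\dots,\f_k$ (hence $\ord_{\J_1},\dots,\ord_{\J_k}$) are linear on $\t$, while $\f_{k+1}$ is piece-wise linear and convex on $\t$ (Remark \ref{pwlQO}). First I would refine $\t$ by the fan $\Sigma_{k+1}$; the cones $\t'$ of the refinement carry linear $\f_1,\dots,\f_{k+1}$, and the rays of the refinement interior to $\t$ are rays of $\Sigma_{k+1}$ meeting $\stackrel{\circ}{\s}$. By an inclusion--exclusion over this refinement (exactly as in \cite{CoGP}), $P_{k,\t}(S)$ is an alternating sum of the analogous series $P_{k,\t'}(S)$ over the cones of the refinement and their faces, so it suffices to show that each $P_{k,\t'}(S)$ is rational with denominator dividing $\prod_{\r\leqslant\t'}^{\dim\r=1}\bigl(1-\L^{\Psi_k(\nu_\r)}T^{\f_k(\nu_\r)}\bigr)\bigl(1-\L^{\Psi_{k+1}(\nu_\r)}T^{\f_{k+1}(\nu_\r)}\bigr)$ when $k<d$, the $\f_{k+1}$-factors being replaced by $1-\L^dT$ when $k=d$. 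Discarding the rays with $\f_k(\nu_\r)=\f_{k+1}(\nu_\r)$ and reassembling then yields the denominator displayed in (\ref{P-jQO}).

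Fix such a $\t'$. In $\hat N_\R=N_\R\times\R$ consider the rational polyhedral cone $C_{\t'}:=\overline{\t'(k)}$, which equals $\{(\nu,s):\nu\in\t',\ \f_k(\nu)\leqslant s\leqslant\f_{k+1}(\nu)\}$ for $k<d$ and $\{(\nu,s):\nu\in\t',\ \f_d(\nu)\leqslant s\}$ for $k=d$; its extreme rays are generated by $(\nu_\r,\f_k(\nu_\r))$ and $(\nu_\r,\f_{k+1}(\nu_\r))$ for $\r$ a ray of $\t'$ --- the two coinciding precisely when $\f_k(\nu_\r)=\f_{k+1}(\nu_\r)$ --- together with $(0,1)$ when $k=d$. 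By Remark \ref{86QO}(ii) and Proposition \ref{Prop-equivQO}, the classes of $A_{k,\t'}=\t'(k)\cap\hat N$ under $\sim$ correspond bijectively to the points of $\pi(A_{k,\t'})$, where $\pi\colon\hat N\to\Z^{k+1}$ is the linear map $\pi(\nu,s)=(\f_1(\nu),\dots,\f_k(\nu),s)$ (integral because $\J_i$ is a monomial ideal in $\C\{\s^\vee\cap M\}$), and the weight $\L^{sk-\ord_{\J_k}(\nu)}T^s=\L^{sk-\f_1(\nu)-\cdots-\f_k(\nu)}T^s$ of a class depends only on its $\pi$-image. From the identities $k\f_k-\ord_{\J_k}=\Psi_k$ and $k\f_{k+1}-\ord_{\J_k}=\Psi_{k+1}$ one reads off that the monomial attached through $\pi$ to $(\nu_\r,\f_k(\nu_\r))$ is $\L^{\Psi_k(\nu_\r)}T^{\f_k(\nu_\r)}$, to $(\nu_\r,\f_{k+1}(\nu_\r))$ is $\L^{\Psi_{k+1}(\nu_\r)}T^{\f_{k+1}(\nu_\r)}$, and to $(0,1)$ is $\L^dT$.

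The remaining point is that $\sum_{y\in\pi(A_{k,\t'})}(\text{weight of }y)$ is rational with denominator dividing the product of the factors $1-(\text{monomial of the ray generator})$ over the extreme rays of $C_{\t'}$; multiplying by $(\L-1)^k\in\Z[\L]$ then gives $P_{k,\t'}(S)$ in the asserted form with numerator in $\Z[\L,T]$. This is the combinatorial core treated in \cite{CoGP}: $\pi(C_{\t'}\cap\hat N)$ is a finitely generated monoid contained in the rational cone $\pi(C_{\t'})$, whose extreme rays are among the images of the extreme rays of $C_{\t'}$; triangulating $\pi(C_{\t'})$ using only those rays, each simplicial piece contributes a finite union of translates of a free monoid on some of the $\pi(\nu_\r)$, and an inclusion--exclusion over the triangulation and its faces produces the denominator in the required shape. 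Every object constructed --- the rays $\nu_\r$, the exponents $\Psi_k(\nu_\r),\f_k(\nu_\r),\Psi_{k+1}(\nu_\r),\f_{k+1}(\nu_\r)$, the lattice $\hat N$, and the fundamental-parallelepiped data of the triangulations --- is determined by $M$ and the Newton polyhedra $\mathcal N(\J_1),\dots,\mathcal N(\J_d)$, whence the last sentence of the statement. I expect the main obstacle to be precisely this last bookkeeping step: controlling the generating series of the non-injective projection $\pi$ (equivalently, passing from a sum over $A_{k,\t}$ to a sum over $A_{k,\t}/_{\sim}$) while keeping the denominator of the prescribed form, since projected fundamental parallelepipeds may overlap and require a careful inclusion--exclusion, exactly as handled in \cite{CoGP}.
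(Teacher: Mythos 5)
Your overall strategy is the one the paper itself invokes: its proof of Proposition \ref{ratP_jQO} is a one-line reference to the argument of Proposition 9.5 of \cite{CoGP}, and your reduction to generating series of lattice points of the cones $\overline{\t(k)}\subset\hat{N}_\R$, the computation that the weight $\L^{sk-\ord_{\J_k}(\nu)}T^s$ depends only on $(\f_1(\nu),\dots,\f_k(\nu),s)$, the identification of the ray monomials $\L^{\Psi_k(\nu_\r)}T^{\f_k(\nu_\r)}$ and $\L^{\Psi_{k+1}(\nu_\r)}T^{\f_{k+1}(\nu_\r)}$, and the factor $\L^{d}T$ coming from the ray $(0,1)$ when $k=d$ are all correct ingredients of that argument.

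However, two steps of your sketch, as written, do not deliver the precise denominator of (\ref{P-jQO}). First, after refining $\t$ by $\Sigma_{k+1}$ and ``reassembling'', the factors you control are indexed by \emph{all} rays of \emph{all} subcones of the common refinement $\t\cap\Sigma_{k+1}$: this includes factors $1-\L^{\Psi_k(\nu_\r)}T^{\f_k(\nu_\r)}$ for rays $\r$ of $\Sigma_{k+1}$ interior to $\t$ (whereas the first product in (\ref{P-jQO}) runs only over rays of $\t$), and factors attached to rays of the refinement that are rays of neither $\t$ nor $\Sigma_{k+1}$ (such rays can occur, e.g.\ in the boundary of $\t$ or when $\t$ is not of maximal dimension). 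None of these appear in (\ref{P-jQO}), and a rational form with this larger list of candidate factors is strictly weaker: it would not yield Theorem \ref{PLambdaRacQO} with the set $B(S)$ of Definition \ref{jotak-bisQO}, which only admits pairs $(\Psi_k(\nu_\r),\f_k(\nu_\r))$ for $\r\in\cup_{i\leqslant k}\Sigma_i^{(1)}$. Second, the decomposition over the refinement is not disjoint at the level of $\sim$-classes: by Remark \ref{86QO}(ii) a class is a fiber of your projection $\pi$ over the whole of $\stackrel{\circ}{\t}$, and since $\f_{k+1}$ is only piece-wise linear on $\t$ the same class typically has representatives in several subcones; hence summing the per-subcone series overcounts, and the correction terms are images under $\pi$ of lattice-point sets rather than lattice points of cones. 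You explicitly flag this second difficulty and delegate it to \cite{CoGP}, which is legitimate (the paper does exactly that), but the first point is asserted rather than proved: the mechanism by which only rays of $\t$ contribute $\f_k$-type factors while only rays of $\Sigma_{k+1}$ contribute $\f_{k+1}$-type factors is precisely the content of the argument of \cite{CoGP} being invoked, and it has to be reproduced, not replaced by the cruder ``product over all rays of all subcones'' bound.
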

\begin{proof} The proof follows by using the same argument of Proposition 9.5 \cite{CoGP}.
     \end{proof}

\begin{remark}
The factor $1-\L^dT$ does not appear in the denominator of
$P_{k,\t}(S)$ for $1\leqslant k\leqslant d-1$.
\end{remark}

\medskip

{\em Proof of Theorem  \ref{PLambdaRacQO}}. It is consequence of
Propositions \ref{ratP_jQO} and  \ref{mk}. \mathproofbox

\medskip

{\em Proof of Corollary         \ref{P-geomQO}}. It follows from Proposition \ref{descompPgeomQO} and Theorem
\ref{PLambdaRacQO}. \mathproofbox

       \medskip

{\em Proof of Corollary  \ref{vol-mot2QO}}.  The proof is the same
as Proposition 10.1  \cite{CoGP}. \mathproofbox

\section{Geometrical definition of the logarithmic jacobian ideals}
\label{torQO}

We introduce a distinguished sequence of monomial ideals of the
coordinate ring of the normalization of the q.o.~hypersurface.

Let $(Y,0)$ be a germ of complex analytic variety. Its analytic algebra $\mathcal{O}_Y$ is of the form
$\mathcal{O}_Y  = \C \{ X \}  / I$,  for $X = (X_1, \dots, X_n)$.  We denote
by  $\Omega_Y^1$ the $A$-module  of {\em   (K\" ahler) holomorphic differential forms}
and by $d :\mathcal{O}_Y  \rightarrow  \Omega_Y ^1$ its canonical
derivation.
We denote by $ \Omega^k_Y$ the $A$-module
$ \Omega^k_Y := \bigwedge^k \Omega^1_Y$. See Section 1.10 of   \cite{GLS} for instance.

First, we review the normal toric case following \cite{Oda}
Chapter 3 and \cite{LR} Appendix. We consider the toric
singularity $Z$ with analytic algebra  of the form $\mathcal{O}_{Z}= \C
\{\s^\vee \cap M \} $. We denote by $D$ the equivariant Weil
divisor defined by the sum of orbit closures of codimension one
in the toric variety $Z$. The $\mathcal{O}_{Z}$-module
$\Omega_{Z}^1 (\log D) $
 of $1$-forms of $Z$ with logarithmic poles along $D$ is
identified with $ \mathcal{O}_Z \otimes_\Z M $. We have a map
of $\mathcal{O}_Z$-modules $  \omega: \Omega_{Z}^1 \rightarrow
\mathcal{O}_Z \otimes_\Z M $, determined  by $d X^{\g} \mapsto
X^\g \otimes \g$, for $\g \in \s^\vee \cap M$. Notice that  if
$\{ \g_i \}_{i=1}^r $ generate the semigroup $ \s^\vee \cap M$
then $ \{  d X^{\g_i} \}_{i=1}^r$   generate the
$\mathcal{O}_Z$-module
 $\Omega_{Z}^1$.
If $\psi = \sum_{\g \in \s^\vee \cap M} c_\g X^\g $ then $d
\psi \mapsto        \w (d \psi) =  \sum_{\g \in \s^\vee \cap M}
c_\g X^\g \otimes  \g$. Notice that  $  \w (d \psi) =
 \sum_{i=1}^d ( \sum_{\g \in   \s^\vee \cap M}     c_\g \, \g_i X^\g )   \otimes u_i$, where
$(\g_1, \dots, \g_d) $ denote the coordinates of $\g$ in terms of
a basis $u_1, \dots, u_d$ of the lattice $M$. For $k =1, \dots, d$
we have the following homomorphism of $\mathcal{O}_{Z}$-modules
$\y^k \w :\Omega_{Z}^k \longrightarrow     \Omega_{Z}^k (\log D) =
\mathcal{O}_Z \otimes_\Z \bigwedge^k  M$
\begin{equation} \label{w}
 d X^{\g_1} \y \cdots \y    d X^{\g_k} \mapsto  X^{\g_1 + \cdots
+ \g_k } \otimes  \g_1 \y \cdots \y \g_k.
\end{equation}
Notice that fixing a basis $u_1,  \dots,  u_d$ of the rank $d$
lattice $M$ defines an  isomorphism $ \bigwedge^d M  \rightarrow
\Z$, given by $u_1 \y \cdots  \y u_d \mapsto 1$. This provides
an homomorphism of $\mathcal{O}_Z$-modules
\begin{equation}               \label{f}
\f: \Omega_{Z}^d (\log D)  \rightarrow \mathcal{O}_Z.
\end{equation}
The
image of $\Omega_Z^d$ by the composite
$\Omega_{Z}^d \rightarrow     \Omega_{Z}^d (\log D)   \rightarrow \mathcal{O}_Z $
is an ideal of   $\mathcal{O}_Z$, which is
independent of the basis of $M$ chosen. This ideal is called
the {\em logarithmic jacobian ideal} of $Z$ in \cite{LR}.

If  $(S, 0)$ is the germ of q.o.~singularity then its
normalization is a toric singularity and we have a canonical map $
 \eta: \Omega_{S}^k \rightarrow  \Omega_{\bar{S}}^k$ for $1\leqslant k\leqslant d$,
which induced by the normalization map $\bar{S} \rightarrow S$. We
denote also by $\f$  and $\y^k \w$ the maps (\ref{f})  and
(\ref{w}) if $Z = \bar{S}$.

\begin{definition}          {\rm (cf.~Def. 11.1 \cite{CoGP})}
The {\em $k^{th}$-logarithmic jacobian ideal} of $(S,0)$ is the
$\mathcal {O}_{\bar{S}}$-module generated by the set
$\phi(\wedge^k \omega (\y^k \eta (
\Omega_S^k)))\wedge\bigwedge^{d-k}M) \subset
\mathcal{O}_{\bar{S}}$.
\end{definition}

\begin{Pro} \label{jotak}  {\rm  (generalizing Theorem 3.3 of \cite{PedroNash},
  cf.~Prop. 11.2 \cite{CoGP})}
For $k=1,\ldots,d$ the $k^{th}$-logarithmic jacobian ideal of a
q.o.~hypersurface $(S,0)$ is the monomial ideal $\J_k$ of $\C \{
\s^\vee \cap {M} \} $ of Definition \ref{idealesJac}.
\end{Pro}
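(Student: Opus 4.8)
The plan is to compute $\f\circ\y^{k}\w\circ\y^{k}\eta$ on a finite generating set of $\Omega_{S}^{k}$, wedge with $\bigwedge^{d-k}M$, apply $\f$, and check that the $\mathcal{O}_{\bar S}$-module so obtained is the monomial ideal $\J_{k}$. Since $\mathcal{O}_{S}=\C\{x_{1},\dots,x_{d+1}\}/(f)$, the module $\Omega^{1}_{S}$ is $(\bigoplus_{i=1}^{d+1}\mathcal{O}_{S}\,dx_{i})/(df)$, so $\Omega^{k}_{S}$ is generated over $\mathcal{O}_{S}$ by the forms $dx_{i_{1}}\y\cdots\y dx_{i_{k}}$ with $1\leqslant i_{1}<\cdots<i_{k}\leqslant d+1$; by $\mathcal{O}_{S}$-linearity it is enough to evaluate on these and on decomposable $\beta=m_{1}\y\cdots\y m_{d-k}$, $m_{l}\in M$. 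Using $\w(dX^{\g})=X^{\g}\otimes\g$ and $x_{i}=X^{e_{i}}$ for $i\leqslant d$, a form $dx_{i_{1}}\y\cdots\y dx_{i_{k}}$ with $i_{k}\leqslant d$ maps to $X^{e_{i_{1}}+\cdots+e_{i_{k}}}\otimes(e_{i_{1}}\y\cdots\y e_{i_{k}})$, whereas $dx_{d+1}=d\z\mapsto\sum_{\l}\beta_{\l}X^{\l}\otimes\l$, the sum over the support of the expansion $\z=\sum_{\l}\beta_{\l}X^{\l}$ in $\C\{\s^{\vee}\cap M\}$; applying $\f$ after wedging with $\beta$ multiplies the monomial factor by the integral coordinate of the resulting element of $\bigwedge^{d}M\cong\Z$. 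Hence the $k^{th}$-logarithmic jacobian ideal of $(S,0)$ is the $\mathcal{O}_{\bar S}$-module generated by: (a) for $1\leqslant i_{1}<\cdots<i_{k}\leqslant d$, the monomials $X^{e_{i_{1}}+\cdots+e_{i_{k}}}$ (the integral coordinate of $e_{i_{1}}\y\cdots\y e_{i_{k}}\y\beta$ can be made $\pm 1$, since $e_{i_{1}},\dots,e_{i_{k}}$ are linearly independent in $M$); and (b) for $1\leqslant i_{1}<\cdots<i_{k-1}\leqslant d$, the elements $\sum_{\l}\beta_{\l}c_{\l,\beta}X^{e_{i_{1}}+\cdots+e_{i_{k-1}}+\l}$, where $c_{\l,\beta}$ is the integral coordinate of $e_{i_{1}}\y\cdots\y e_{i_{k-1}}\y\l\y\beta$ and is nonzero precisely when $\l\notin\mathsf{span}_{\Q}\{e_{i_{1}},\dots,e_{i_{k-1}}\}$.

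Family (a) is exactly the set of generators of $\J_{k}$ with all indices in $\{1,\dots,d\}$. To complete the inclusion ``($k^{th}$-logarithmic jacobian ideal) $\subseteq\J_{k}$'' I would show that every monomial occurring in a family-(b) element lies in $\J_{k}$: assume $\beta_{\l}\neq 0$ and $\l\notin\mathsf{span}_{\Q}\{e_{i_{1}},\dots,e_{i_{k-1}}\}$. By Lemma \ref{expo} there is a unique $j\in\{0,\dots,g\}$ with $\l_{j}\leqslant\l$, $\l_{j+1}\nleqslant\l$ and $\l\in M_{j}$; then $\mu:=\l-\l_{j}$ has non-negative coordinates and lies in $M$, so $\mu\in\s^{\vee}\cap M$ and $X^{\l}=X^{\l_{j}}X^{\mu}$ (with $\l_{0}=0$, $X^{\l_{0}}=1$). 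If $\l_{j}\notin\mathsf{span}_{\Q}\{e_{i_{1}},\dots,e_{i_{k-1}}\}$, then $X^{e_{i_{1}}+\cdots+e_{i_{k-1}}+\l_{j}}$ is a generator of $\J_{k}$ (with $j_{k}=d+j$, cf.~Notation \ref{dplusg}) dividing $X^{e_{i_{1}}+\cdots+e_{i_{k-1}}+\l}$; otherwise $\mu$ must have a positive coordinate on some $e_{i_{k}}$ with $i_{k}\leqslant d$ and $i_{k}\notin\{i_{1},\dots,i_{k-1}\}$ (else $\l$ would lie in that span), and then the family-(a) generator $X^{e_{i_{1}}+\cdots+e_{i_{k-1}}+e_{i_{k}}}$ divides $X^{e_{i_{1}}+\cdots+e_{i_{k-1}}+\l}$.

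The reverse inclusion $\J_{k}\subseteq$ ($k^{th}$-logarithmic jacobian ideal) is the crux. One must produce each generator $X^{e_{j_{1}}+\cdots+e_{j_{k-1}}+\l_{i}}$ of $\J_{k}$ with $\l_{i}\notin\mathsf{span}_{\Q}\{e_{j_{1}},\dots,e_{j_{k-1}}\}$ from families (a) and (b). Fixing $j_{1},\dots,j_{k-1}$ and choosing $\beta$ with $c_{\l_{i},\beta}\neq 0$, the associated family-(b) element equals $c_{\l_{i},\beta}\beta_{\l_{i}}X^{e_{j_{1}}+\cdots+e_{j_{k-1}}+\l_{i}}$ plus a sum of monomials $X^{e_{j_{1}}+\cdots+e_{j_{k-1}}+\l}$ with $\l\neq\l_{i}$ in the support of $\z$; by the reduction of the previous paragraph, applied to the characteristic exponent $\l_{j'}\leqslant\l$ to which $\l$ reduces, each of these is a $\C\{\s^{\vee}\cap M\}$-multiple of a family-(a) generator, of $X^{e_{j_{1}}+\cdots+e_{j_{k-1}}+\l_{i}}$ itself, or of another generator $X^{e_{j_{1}}+\cdots+e_{j_{k-1}}+\l_{j'}}$ of $\J_{k}$. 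Since $c_{\l_{i},\beta}\beta_{\l_{i}}$ is a nonzero scalar, one then runs an induction over the generators of this last kind, organized by an ordering of the characteristic exponents in which those reached from $\l_{i}$ by the above reduction are treated first; choosing $\beta$ adapted to a coordinate on which $\l_{i}$ does not vanish restricts which exponents $\l$ can occur and is what makes such an ordering available. This bookkeeping — matching the support of the fractional power series $\z$ with the characteristic exponents through Lemma \ref{expo} and the relabeling (\ref{lex}), and verifying that the induction is well-founded — is the main difficulty, and is the quasi-ordinary counterpart of the purely toric computation in \cite{CoGP}, Proposition 11.2, and of Theorem 3.3 of \cite{PedroNash} for $k=d$. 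The case $k=1$ is immediate from the description above and serves as the base of the induction on $k$.
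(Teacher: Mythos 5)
Your computation of the images of the generators $dx_{i_1}\y\cdots\y dx_{i_k}$, the split into families (a) and (b), and the inclusion of the logarithmic jacobian ideal in $\J_k$ follow the same route as the paper, but the reverse inclusion --- which you yourself identify as the crux --- is not actually proved, and the induction you sketch is both unnecessary and, as described, not workable: choosing $\beta\in\bigwedge^{d-k}M$ only annihilates the coefficients $c_{\l,\beta}$ of those $\l$ lying in one hyperplane containing $\mathsf{span}_\Q\{e_{i_1},\dots,e_{i_{k-1}}\}$, and in any case killing the coefficient of a characteristic exponent $\l_{j'}$ does not remove the infinitely many other support terms $\l$ with the same index $j'$ in Lemma \ref{expo}, each of which your reduction sends back to the not-yet-established generator $X^{e_{i_1}+\cdots+e_{i_{k-1}}+\l_{j'}}$; the well-foundedness you appeal to is never verified. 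The missing idea, which is what the paper's proof rests on, is that the characteristic exponents are totally ordered, $\l_1\leqslant\cdots\leqslant\l_g$ (implicit in Lemma \ref{expo}), so that for fixed $i_1,\dots,i_{k-1}$ only the \emph{minimal} exponent $\l_n$ with $e_{i_1}\y\cdots\y e_{i_{k-1}}\y\l_n\ne 0$ matters. Indeed, every term $X^{e_{i_1}+\cdots+e_{i_{k-1}}+\l}$ of the family-(b) element whose index from Lemma \ref{expo} is $\geqslant n$ has $\l\geqslant\l_n$ and is a multiple of $X^{e_{i_1}+\cdots+e_{i_{k-1}}+\l_n}$, while every term with index $<n$ has its characteristic exponent in the span, hence integer coordinates outside $\{i_1,\dots,i_{k-1}\}$, hence is a multiple of a family-(a) monomial. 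Grouping the series accordingly (with $\beta$ chosen so that $c_{\l_n,\beta}\ne0$), the family-(b) element equals $X^{e_{i_1}+\cdots+e_{i_{k-1}}+\l_n}$ times a unit plus an element of the ideal generated by family (a) --- this is exactly the paper's expression $X^{e_{i_1}+\cdots+e_{i_{k-1}}+\l_n}\epsilon_n+\sum X^{e_{i_1}+\cdots+e_{i_{k-1}}+v}\epsilon_v$ with $v\in\s^\vee\cap M_0$ --- so $X^{e_{i_1}+\cdots+e_{i_{k-1}}+\l_n}$ lies in the logarithmic jacobian ideal, and the remaining generators $X^{e_{i_1}+\cdots+e_{i_{k-1}}+\l_j}$ of $\J_k$ with $j>n$ are multiples of it; no induction over characteristic exponents is needed.

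The same integrality observation is also needed at a step of your forward inclusion that you left unjustified: in your second case ($\l_j\in\mathsf{span}_\Q\{e_{i_1},\dots,e_{i_{k-1}}\}$) you deduce that $X^{e_{i_1}+\cdots+e_{i_{k-1}}+e_m}$ divides $X^{e_{i_1}+\cdots+e_{i_{k-1}}+\l}$ from the mere positivity of the $e_m$-coordinate of $\mu=\l-\l_j$, but since the exponents in the support of $\z$ are fractional in general, divisibility requires that coordinate to be $\geqslant 1$, i.e. $\l-e_m\in\s^\vee\cap M$. This is true because $\l_1\leqslant\cdots\leqslant\l_j$ all lie in the span, so every element of $M_j$, in particular $\mu$, has integer coordinates outside $\{i_1,\dots,i_{k-1}\}$ --- but it has to be said. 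Finally, for a fixed $\beta$ the coefficient $c_{\l,\beta}$ vanishes \emph{only if} (not ``precisely when'') $\l\in\mathsf{span}_\Q\{e_{i_1},\dots,e_{i_{k-1}}\}$; this slip is harmless for the corrected argument, but as stated it is incorrect.
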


\begin{proof} We denote by $\J_k'$ the ideal generated by
$\phi(\wedge^k \omega (\y^k \eta (
\Omega_S^k)))\wedge\bigwedge^{d-k}M) \subset
\mathcal{O}_{\bar{S}}$. For $I = (i_1, \dots, i_{k}) \subset \{ 1,
\dots, d+1 \}^k $ we analyze the images of the elements $dx_I :=
dx_{i_1} \y \dots \y dx_{i_k}$, which generate    $\Omega_S^k$  as
a $\mathcal{O}_S$-algebra, by the homomorphism $\y^k \w$. We have
two possibilities.
\begin{enumerate}
 \item[(i)]
 If $i_1, \dots    i_k \in \{ 1, \dots, d \}$ then $\y^k \w ( \y^k \eta (dx_I )) =
 X^{\sum_I e_{i_r}} \otimes   e_{i_1} \y \cdots \y e_{i_k}$.
 Then taking wedges with elements of $\y^{d-k} M$ and applying
the homomorphism $\f$ we obtain a generator of $\J_k'$ equal to
$X^{e_{i_1} + \dots + e_{i_k}}$ if  and only if  $e_{i_1} \y
\cdots \y e_{i_k} \ne 0$.

\item[(ii)]  If one of the $i_r$, say $i_k$, is equal to $d+1$,
then we set $n:= \min\{ 1 \leqslant j \leqslant g \mid e_{i_1} \y
\cdots e_{i_{k-1}} \y \l_j \ne 0 \} \cup \{ \infty \}$. The image
of $x_{d+1}$ in $\mathcal{O}_{\bar{S}} = \C \{ \s^\vee \cap M \}$
is of the form $x_{d+1} = a + b$ where    $a =\sum _{\l \geqslant
\l_n} \b_\l X^{\l}$ and $b = \sum_ { \l \ngeq \l_n }    \b_\l
X^{\l}$. Then we have that $dx_{d+1} = da + db$ in
$\Omega_{\bar{S}}$ hence $\y^k
\w ( \y_{l=1, \dots, k-1} dx_{i_l} \y da) =0$ and
\[
\y^k \w  ( \y^k \eta ( dx_I ))  =  \y^k \w (\y_{l=1, \dots, k-1} dx_{i_l}
 \y db). \] Then taking wedges
with elements of $\y^{d-k} M$ and applying the homomorphism $\f$
we obtain a generator of $\J_k'$ of the form: \[ X^{e_{i_1} +
\dots + e_{i_{k-1}} + \l_n } \epsilon_n + \sum_{\mbox{{\small finite}}}
X^{e_{i_1} + \dots + e_{i_{k-1}} + v } \epsilon_{v}
\]
where $\epsilon_{\l_n}$ and $\epsilon_{v}$ are units in
$\mathcal{O}_{\bar{S}}$, $v \in \s^\vee \cap M_0$ and $e_{i_1} \y
\cdots \y e_{i_{k-1}}  \y v \ne 0$.
\end{enumerate}
The  elements we obtain in (i) and (ii) are generators of $\J_k'$.
Since $M_0 = \Z e_1 + \cdots + \Z e_d$  the terms
$X^{e_{i_1} + \dots + e_{i_{k-1}} + v }$ belong to $\J_k'$ hence
we deduce that $\J_k = \J_k'$. \end{proof}

\section{Examples}
\label{QOexample}

  \begin{Exam}        \label{e1}
 We compute the series    $P_{
{\geom}}^{(S,0)}(T)$ for a q.o.~surface parametrized by a q.o.~
branch $\z$ with characteristic exponents $\l_1=(3/2,0),\
\l_2=(7/4,0)\mbox{ and }\l_3=(2,1/2)$.
  \end{Exam}

We have that $\s=\R^2_{\geqslant 0}$ and $\s^\vee\cap M \cong
\Z^2_{\geqslant 0}$. It follows that $(S,0)$ has smooth
normalization thus $P_{\geom}^{(\bar{S},0)}(T) = (1 -
\L^2T)^{-1}$. We denote by $\theta_1$ and $\theta_2$ the
one-dimensional faces of $\s$. The plane curves $S_{\theta_1}$ and
$S_{\theta_2}$ have multiplicities two and four respectively which
determine the terms $P(S_{\theta_1})$ and $P(S_{\theta_2})$ by
Remark \ref{P-codim1QO}. Notice that $\mathcal D_1=\emptyset$ thus
\begin{equation} \label{aaa}
P(S)= \sum_{\t \in \Sigma_1\cap\Sigma_2}^{\stackrel{\circ}{\t}
\subset \stackrel{\circ}{\s}} P_{2, \t} (S) .
\end{equation} We
determine this sum by computing the generating functions of the
sets $\stackrel{\circ}{\t} \cap N$  and applying the method of
Proposition 9.5 and Section 12 of \cite{CoGP}. We get:
\begin{center}
$P_2(S)=\frac{(\L-1)^2}{1-\L^2T}\left(\frac{\L^{13}T^{17}}{(1-\L
T)(1-\L^{12}T^{16})}+\frac{\L^2T^6+\L^4T^8+\L^6T^{10}+\L^8T^{12}+\L^{10}T^{14}+\L^{12}T^{20}}{(1-\L^{12}T^{16})(1-T^4)}\right.$
  $\left.+\frac{\L^2T^4+\L^4T^8}{(1-T^4)(1-\L^4T^4)}+\frac{\L^{12}T^{16}}{1-\L^{12}T^{16}}+\frac{T^4}{1-T^4}\right).$
\end{center}
We have that  $      P_{
{\geom}}^{(S,0)}(T)=(1-T)^{-1}+P(S_{\theta_1})+P(S_{\theta_2})+P(S)$.
The motivic volume  is
\begin{center}
$\mu(H_S)=\frac{1}{(1-\L)(1-\L^{20})}+\frac{-1}{1-\L^{20}}+\frac{1+\L^6+\L^8+\L^{10}+\L^{12}+\L^{14}+\L^{16}+\L^{18}}{(1-\L^4)(1-\L^{20})}.$
\end{center}

 \begin{Exam} \label{e2}
 We describe the series    $P_{
{\geom}}^{(Z(S),0)}(T)$ associated with the monomial variety
associated with the q.o.~surface$(S,0)$  of example \ref{e1} (see
Definition \ref{zs}).
  \end{Exam}
 The semigroup $\Gamma$ is
determined by the characteristic exponents. In this example the
semigroup is generated by $e_1=(1,0),\ e_2=(0,1/2),\ e_3=(3/2,0),\
e_4=(13/4,0)$ and $e_5=(24/4,1/4)$. The lattice $M$ coincides with
the one of $\bar{S}$. The dual cone of $\Gamma \R_{\geqslant 0}$
coincides with $\s$. Since the monomial curves
$Z^{\Gamma\cap\theta_1^\perp}$ and $Z^{\Gamma \cap\theta_2^\perp}$
have multiplicities $2$ and $4$ respectively, we get $P(\Gamma
\cap\theta_i^\perp)= P(S_{\theta_i})$ for $i = 1, 2$. Since
$\mathcal D_1=\emptyset$ we obtain that $P(\Gamma)  $ is obtained
by a formula analogous to the right hand-side of (\ref{aaa}), in
terms of the subdivision associated to the logarithmic jacobian
ideals of $Z^{\Gamma}$ (see \cite{CoGP}). The subdivision obtained
for the monomial variety is different that the one obtained for
the q.o.~singularity. We get:
\begin{center}
$P(\Gamma)=\frac{(\L-1)^2}{1-\L^2T}\left(\frac{\L^{51}T^{55}}{(1-\L
T)(1-\L^{50}T^{54})}+\frac{\L^{50}T^{54}}{1-\L^{50}T^{54}}+\frac{\L^{50}T^{58}+\sum_{k=3}^{26}\L^{2k-4}T^{2k}}{(1-\L^{50}T^{54})(1-T^4)}+\right.$
$\left.\frac{T^4}{1-T^4}+\frac{\L^2T^4+\L^4T^8}{(1-T^4)(1-\L^4T^4)}\right).$
\end{center}
Then we have that $P_{
{\geom}}^{(Z(S),0)}(T)=(1-T)^{-1}+P(\Gamma\cap\theta_1^\perp)+P(\Gamma\cap\theta_2^\perp)+P(\Gamma)$,
and it is easy to see that $P_{\geom}^{(S,0)}(T)\neq
P_{\geom}^{(Z^\Gamma,0)}(T)$.

See \cite{Cobo} for explicit examples of geometric motivic
Poincar\'e series of toric or q.o.~singularities of dimensions
two or three.

 {\small

}

\end{document}